\newtheorem{theorem}{Theorem}[section]
\newtheorem{lemma}[theorem]{Lemma}
\newtheorem{proposition}[theorem]{Proposition}
\newcounter{claims}[theorem]
\theoremstyle{definition}
\newtheorem{definition}[theorem]{Definition}
\numberwithin{subcase}{cases}
\theoremstyle{remark}
\newtheorem{claim}[claims]{Claim}
\newcommand{\mc}[1]{\mathcal{#1}}
\newcommand{\lkill}{\ell_{\mathsf{kill}}}
\newcommand{\Inv}[2]{{#2}^{(-#1)}}
\DeclareMathOperator{\Mod}{Mod}
\newcommand{\bfPi}{\mathbf{\Pi}}
\newcommand{\bfSigma}{\mathbf{\Sigma}}
\begin{document}

\title{Relative to any non-arithmetic set}
\author{Matthew Harrison-Trainor\thanks{The author was supported by a Sloan Research Fellowship and by the National Science Foundation under Grant No.\ \mbox{DMS-2419591}.}}
\date{}

\makeatletter
\def\blfootnote{\xdef\@thefnmark{}\@footnotetext}
\makeatother

\maketitle

\begin{abstract}
	Given a countable structure $\mathcal{A}$, the degree spectrum of $\mathcal{A}$ is the set of all Turing degrees which can compute an isomorphic copy of $\mathcal{A}$. One of the major programs in computable structure theory is to determine which (upwards closed, Borel) classes of degrees form a degree spectrum. We resolve one of the major open problems in this area by showing that the non-arithmetic degrees are a degree spectrum. Our main new tool is a new form of unfriendly jump inversions where the back-and-forth types are maximally complicated. This new tool has several other applications.
\end{abstract}

\section{Introduction}

It is well-known that if a set $A \subseteq \mathbb{N}$ is computable relative to every non-computable set then $A$ is computable \cite{KleenePost}. Nevertheless the non-computable sets do hold some non-trivial information in common. Answering a question of Lempp, Slaman \cite{Slaman} and Wehner \cite{Wehner} independently showed that there is a mathematical problem which can be solved by exactly the non-computable sets, namely the problem of computing a copy of some particular mathematical structure.

\begin{theorem}[Slaman \cite{Slaman} and Wehner \cite{Wehner}]
	There is a countable structure $\mc{M}$ such that for any set $X$, $X$ computes an isomorphic copy of $\mc{M}$ if and only if $X$ is not computable.
\end{theorem}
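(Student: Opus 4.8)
The plan is to obtain $\mc{M}$ as a graph coded by a family of finite sets, following the enumeration‑theoretic approach. First I would reduce to pure combinatorics of families. By a standard coding, from any family $\mathcal{F}$ of finite subsets of $\N$ one builds, uniformly, a countable graph $G_{\mathcal{F}}$ --- a disjoint union of gadgets: for each $S \in \mathcal{F}$, infinitely many copies of a finite ``flower'' graph $G_S$ (say a root vertex with attached cycles of lengths $\{k+3 : k \in S\}$, tagged so that every connected component of any copy is forced to be one of these gadgets and $S$ can be read off from it) --- with the property that $X$ computes a copy of $G_{\mathcal{F}}$ if and only if $X$ computes an \emph{enumeration} of $\mathcal{F}$ in the following sense: an $X$‑computable sequence $(V_j)_{j \in \N}$ of finite sets, where each $V_j$ is grown over time only by \emph{adding} elements (never retracting --- a petal, once placed, stays), and whose limits list exactly the members of $\mathcal{F}$, each infinitely often. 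This ``add‑only, no‑retraction'' feature is weaker than demanding that $\mathcal{F}$ be $X$‑c.e.\ as a set of canonical indices, and that slack is precisely what the construction must exploit.

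It then suffices to construct a single family $\mathcal{F}$ of finite sets such that $X$ enumerates $\mathcal{F}$, in the above sense, if and only if $X$ is noncomputable. This is Wehner's family. I would organize $\mathcal{F}$ into columns $\{n\} \oplus F$, using the $n$th column to defeat the $n$th candidate computable builder: by the recursion theorem the ``forbidden'' configurations in column $n$ are defined in terms of what builder $n$ does, arranged so that builder $n$ must either complete a gadget that is not supposed to occur in $G_{\mathcal{F}}$ or omit infinitely many that are --- either way it does not produce a copy. The delicate part is to lay out these columns so that they remain \emph{jointly enumerable by every noncomputable oracle}: the construction must leave, in each column, enough room that an arbitrary noncomputable $X$ --- no matter how weak, e.g.\ low or hyperimmune‑free --- can still build a correct sequence of gadgets, using only the fact that $X$ disagrees somewhere with every total computable function, even though it cannot locate the disagreement uniformly.

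Two things must then be checked: (i) no computable $X$ enumerates $\mathcal{F}$, which is the recursion‑theorem diagonalization just sketched; and (ii) every noncomputable $X$ enumerates $\mathcal{F}$. I expect (ii) to be the main obstacle, and it is really the crux of the whole theorem: ``enumerable relative to $X$'' has to come out \emph{exactly} equivalent to ``$X$ is noncomputable'', with no margin --- it must hold for every noncomputable degree, including degrees that are useless in nearly every other respect, and fail only at $\mathbf{0}$. Designing one fixed family that threads this needle --- rigid enough to block all computable enumerations, yet slack enough for all noncomputable ones --- is the heart of the matter, and the add‑only nature of the notion of enumeration is the lever one uses to get both simultaneously. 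Finally, ``$X$ enumerates $\mathcal{F}$'' depends only on $\deg(X)$, so (i) and (ii) give that the degree spectrum of $G_{\mathcal{F}}$ is exactly the noncomputable degrees; take $\mc{M} = G_{\mathcal{F}}$. (An alternative to the explicit family is Slaman's route --- forcing the structure directly with conditions promising copies to the noncomputable sets --- but the explicit family is more concrete and is what I would present.)
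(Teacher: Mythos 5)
The paper itself does not prove this statement; it is cited to Slaman and Wehner, and the machinery is reused (in relativized form) only later, in the proof of the main theorem. Your proposal is, correctly, a sketch of Wehner's route rather than Slaman's, and the overall architecture --- code a family of finite sets into a ``bouquet'' graph whose degree spectrum equals the degrees that can enumerate the family with an add-only approximation, then exhibit a family enumerable exactly by the noncomputable sets --- is the right one and matches what the paper itself adapts when it constructs $\mc{M}_n$.

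However, there is a genuine gap: you explicitly identify the crucial combinatorial lemma and then do not supply it. You write that the heart of the matter is ``designing one fixed family that threads this needle'' and that (ii) ``is really the crux of the whole theorem,'' but you give neither the family nor the verification. The missing content is precisely what makes the theorem true. The family is
\[
\mathcal{F} \;=\; \bigcup_{n} \bigl\{\, \{n\}\oplus F \;:\; F \text{ finite},\; F \neq W_n \,\bigr\},
\]
i.e., the $n$th column contains every finite set except $W_n$ (when $W_n$ is finite). For (i), the recursion theorem gives a $k$ such that the set $V_k$ coded by the first gadget in column $k$ of any computable copy satisfies $V_k = W_k$, a contradiction. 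For (ii), the key observation you gesture at but do not make is this: if $X$ is noncomputable then at least one of $X,\overline{X}$ is not c.e.; say $X$ is not c.e. Then, to populate column $n$, for each pair $(F,s)$ with $F$ finite, build an add-only approximation starting at $F$; hold at $F$ through stage $s$, and at any later stage $t$ at which the current set agrees with $W_{n,t}$, add the least element of $X$ not yet present. This never produces $\{n\}\oplus W_n$ (if it converged to $W_n$, then $W_n$ would equal $F\cup X$, contradicting that $X$ is not c.e.), and for large enough $s$ it produces $\{n\}\oplus F$ for every finite $F\neq W_n$. Without this lemma and its proof, the proposal is a plan, not a proof; the ``crux'' you name must actually be carried out, and it is not.
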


\noindent By computing a copy of $\mc{M}$ we mean that we compute an isomorphic structure with domain $\mathbb{N}$ together with all of the functions, relations, and constants on $\mathbb{N}$.

This result fits into the central program in computable structure theory of studying the \textit{degree spectra} of countable structures. Given a countable structure $\mc{A}$, the degree spectrum of $\mc{A}$ is the set of all Turing degrees that can compute an isomorphic copy of $\mc{A}$.\footnote{Often the degree spectrum is defined as the set of Turing degrees of atomic diagrams copies of $\mc{A}$, but by a result of Knight \cite{Knight86} these definitions are equivalent except for the automorphically trivial structures.} This is a measurement of the difficulty of computing a copy of $\mc{A}$ and captures the information that is encoded in the structure $\mc{A}$. Classifying the degree spectra of countable structures is one of the central problems of computable structure theory. Though we have a large number of results showing that certain classes of Turing degrees are degree spectra, and some small number of results showing that particular classes are not degree spectra, we still seem far from a complete classification. Classifying the degree spectra means determining which computability-theoretic properties of sets of natural numbers can be encoded as the set of solutions to the problem of building a copy of a particular structure. A particular class of degrees being a degree spectra is a $\bfSigma^1_1$---and in fact, by Scott's isomorphism theorem \cite{Sco65}, Borel---definition of that class which takes a very particular form, and so a classification would determine which upwards-closed Borel sets of degrees admit a definition of this form. The above result of Slaman and Wehner shows that the non-computable degrees form a degree spectrum and hence are definable in this way.

The simplest degree spectra are the upwards cones, which consist of all degrees that compute a particular set $A$. Enumeration cones, the set of all degrees that can enumerate a set $A$, are also degree spectra. In addition to the Slaman-Wehner spectrum of non-computable degrees, other examples of degree spectra include the high degrees \cite{high}, the hyperimmune degrees \cite{hyperimmune}, the array non-recursive degrees and degrees without retracable jump \cite{DGT}, the non-low$_\alpha$ degrees \cite{nonlow,KF}, and the non-$K$-trivial degrees \cite{KF}. On the other hand one can probably count on a single hand the results which show that some class of degrees is not a degree spectrum. The most well-known is that a non-trivial countable union of upwards cones (or enumeration cones) is not a degree spectrum. (See Section V.3.2 of \cite{MonBook1} where Montalb\'an attributes this to Knight and her collaborators.) Furthermore, building on ideas of Andrews and Miller \cite{AndrewsMiller15}, Montalb\'an shows that the upwards closure of any $F_\sigma$ set of reals is never a degree spectrum unless it is trivial, i.e., an enumeration cone. As a corollary of this, we get the result of Andrews and Miller that the DNC degrees, ML-random
degrees, and PA degrees are not degree spectra.

While there are uncountably many possible degree spectra, there are only countably many co-null degree spectra and so a good amount of work has concentrated on these. (To see this Greenberg, Montalb\'an, and Slaman \cite{GMS11} show---and, in that paper, note that Kalimullin and Nies had independently announced---that any such degree spectrum must include the Turing degree of Kleene's $\mc{O}$, the complete $\Pi^1_1$ set.) The Slaman-Wehner degree spectrum of the non-computable degrees is an example of such a degree spectrum.

Greenberg, Montalb\'an, and Slaman \cite{nonhyp} asked whether the Slaman-Wehner theorem holds if we replace Turing reducibility by further higher reducibilities. They showed that it does hold for hyperarithmetic reducibility, where $A$ is hyperarithmetic in $B$ if $A$ is computable from some ordinal iterate $B^{(\beta)}$ of the Turing jump of $B$ for some $B$-computable $\beta$, or equivalently, if $A$ is $\Delta^1_1$ in $B$.

\begin{theorem}[Greenberg, Montalb\'an, Slaman \cite{nonhyp}]
	There is a countable structure $\mc{M}$ such that for any set $X$, $X$ computes an isomorphic copy of $\mc{M}$ if and only if $X$ is not hyperarithmetic.
\end{theorem}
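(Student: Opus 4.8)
The plan is to realize the non-hyperarithmetic degrees as the countable intersection $\bigcap_{a\in\mc{O}}\{X : X\not\leq_T H_a\}$, where $H_a$ denotes the $|a|$-th iterate of the Turing jump along the notation $a\in\mc{O}$; recall that a set is hyperarithmetic exactly when it is computable from some such $H_a$, so this intersection is precisely the non-hyperarithmetic degrees. The building blocks are the relativized Slaman--Wehner structures: relativizing the construction of \cite{Slaman} and \cite{Wehner} to an arbitrary oracle $Z$ gives, uniformly in $Z$, a $Z$-computable structure $\mc{M}_Z$ (built from the Wehner family of finite sets $\{\{n\}\oplus F : F\neq W_n^Z\}$) whose degree spectrum is $\{X : X\not\leq_T Z\}$. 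Taking $Z=H_a$ gives, for each computable ordinal, a module that an oracle can copy iff it does not compute $H_a$. Since a copy of a disjoint union (with the components tagged) decomposes into copies of the summands, if we could form the disjoint union of these modules over $\mc{O}$ we would be done; the whole difficulty lies in the word ``if.''

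The first issue is that $\mc{M}_{H_a}$ is only $H_a$-computable, not computable, so one must first ``compile'' it down to a genuinely computable structure $\mc{N}_a$, uniformly in $a$, with the same degree spectrum $\{X : X\not\leq_T H_a\}$. Here one invokes jump-inversion theorems for structures in the style of Ash and Knight: there are computable structures whose iterated Marker/jump extensions code $H_a$, and one grafts the Wehner family $|a|$ levels up so that running the $a$-module internally regenerates the copy of $H_a$ it needs, while the only genuine external obligation on the oracle remains the Wehner enumeration task, one level at a time. (This is exactly where a jump-inversion apparatus is required, foreshadowing the paper's main tool; the delicate point is that the inversion must respect the back-and-forth structure through all $|a|$ levels so that the Wehner diagonalization survives down to the computable copy.)

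The second and principal obstacle is the amalgamation: $\mc{O}$ is $\Pi^1_1$, hence not computably enumerable, so one cannot form the disjoint union over $\mc{O}$ and stay computable. Instead one takes the disjoint union $\mc{M}=\bigsqcup_{a\in\mathrm{Cand}}\mc{N}_a$ over a \emph{computable} set $\mathrm{Cand}\supseteq\mc{O}$ of candidate notations, and one must design the compilation step above robustly enough that it still outputs a well-defined computable structure $\mc{N}_a$ when fed a pseudo-notation $a$ arising from an ill-founded ``ordinal.'' For this to give the right answer I need two things of the pseudo-notation modules: that $\mc{N}_a$ depends only on $a$ (so $\mc{M}$ is a single fixed structure), and that $\mathrm{Spec}(\mc{N}_a)$ still contains every non-hyperarithmetic degree (so the spectrum is not accidentally shrunk). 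Granting this, $\mathrm{Spec}(\mc{M})=\bigcap_{a\in\mathrm{Cand}}\mathrm{Spec}(\mc{N}_a)$: for the reverse inclusion, if $X$ is hyperarithmetic then $X\leq_T H_a$ for a genuine $a\in\mc{O}$, and then $X$ cannot copy $\mc{N}_a$ by the relativized Wehner diagonalization, so $X$ cannot copy $\mc{M}$; for the forward inclusion, if $X$ is non-hyperarithmetic then $X\not\leq_T H_a$ for every candidate $a$, so $X$ can perform every module's enumeration task, and dovetailing these (a transfinite induction along $\mathrm{Cand}$) produces an $X$-computable copy of $\mc{M}$.

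The hard part, I expect, is precisely the interaction of the last two steps: arranging a single computable construction that, uniformly in $a$, is (i) genuinely computable, (ii) correct---copyable from $X$ iff $X\not\leq_T H_a$---for true notations $a\in\mc{O}$, and (iii) harmless for pseudo-notations, all without the construction secretly needing to know $\mc{O}$. Pushing the Wehner diagonalization through $|a|$ successive jump inversions, uniformly and with enough robustness to tolerate ill-founded notations, is the technical crux; the Wehner combinatorics at a single level, the translation from a family of sets to a graph, and the verification of the two inclusions are by now standard.
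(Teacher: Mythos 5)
The overall architecture you sketch — relativize Slaman--Wehner to $H_a$, compile down to a computable structure via jump inversion, amalgamate over a computable $\mathrm{Cand}\supseteq\mc{O}$ — is indeed the shape of the GMS proof, and you correctly locate the crux at the pseudo-notations. However, the compilation step as you state it is already impossible: you ask for a computable structure $\mc{N}_a$ whose degree spectrum is $\{X : X\not\leq_T H_a\}$, i.e.\ the non-$\Delta^0_{|a|+1}$ degrees, and for $|a|\geq 2$ no such structure exists --- the paper itself cites Andrews, Cai, Kalimullin, Lempp, Miller, and Montalb\'an for the fact that the non-$\Delta^0_n$ degrees are not a degree spectrum for $n\geq 3$. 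Jump inversion does not preserve spectra, it shifts them: the $|a|$-fold jump inversion of $\mc{M}_{H_a}$ has spectrum roughly $\{X : X^{(|a|)}\not\leq_T H_a\}$, i.e.\ the non-low$_{|a|}$ degrees. This is exactly why the paper emphasizes that GMS work with the characterization ``$X$ is hyperarithmetic iff $X$ is low$_\alpha$ for some computable $\alpha$'' (resting on the absorption $(\mathbf{0}^{(\alpha)})^{(\alpha\cdot\omega)}=\mathbf{0}^{(\alpha\cdot\omega)}$), rather than ``$X\leq_T H_a$ for some $a\in\mc{O}$.'' The two characterizations agree in the limit over all of $\mc{O}$, so the intersection is still the non-hyperarithmetic degrees, but each individual module must be a non-low$_\alpha$ test, not a non-$\leq_T H_a$ test; the latter spectrum does not exist.

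Second, your treatment of pseudo-notations is a placeholder rather than an argument. The forward-direction step ``if $X$ is non-hyperarithmetic then $X\not\leq_T H_a$ for every candidate $a$'' does not parse when $a$ is ill-founded and $H_a$ does not exist, and the requirement that $\mathrm{Spec}(\mc{N}_a)$ contain all non-hyperarithmetic degrees for such $a$ is precisely what needs to be proved, not assumed. GMS resolve this with an analysis of non-standard jump hierarchies --- roughly, any set satisfying the jump-hierarchy equations along a pseudo-notation is itself far from hyperarithmetic, which is what makes those modules harmless --- and the proposal supplies neither that analysis nor the low$_\alpha$ reformulation that makes it usable. (The appeal to ``a transfinite induction along $\mathrm{Cand}$'' for dovetailing is also unavailable, since $\mathrm{Cand}$ contains ill-founded notations; one needs a single uniform construction.) These missing pieces are where the theorem actually lives.
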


\noindent Thus the non-hyperarithmetic degrees form  a degree spectrum. On the other hand, in \cite{GMS11} they show that this is not true for relative constructibility.

Arithmetic reducibility lies between Turing (computable) reducibility and hyperarithmetic reducibility. A set $A$ is arithmetic in $B$ if there is a formula $\varphi$ in that language of arithmetic defining $A$ in $\mathbb{N}$ with parameter $B$, i.e., $A = \{ n : \mathbb{N} \models \varphi(n,B)\}$.
Equivalently, $A$ is arithmetic in $B$ if $A$ is computable from some finite iterate $B^{(n)}$ of the Turing jump of $B$. Greenberg, Montalb\'an, and Slaman asked whether the Slaman-Wehner theorem holds for arithmetic reducibility: Are the non-arithmetic degrees a degree spectrum? For the past fifteen or so years, the non-arithmetic degrees have been the most prominent class of degrees for which  for which we have not been able to answer this question. The main result of this paper is a resolution of this question.

\begin{restatable}{theorem}{nonarithmetic}
	There is countable structure $\mc{M}$ such that for any set $X$, $X$ computes an isomorphic copy of $\mc{M}$ if and only if $X$ is not arithmetic.
\end{restatable}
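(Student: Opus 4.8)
The plan is to exhibit the non-arithmetic degrees as the intersection $\bigcap_{n} \{X : X \not\leq_T \emptyset^{(n)}\}$ and, accordingly, to build $\mc{M}$ out of structures $\mc{A}_n$ with $\mathrm{DgSp}(\mc{A}_n) = \{X : X \not\leq_T \emptyset^{(n)}\}$ for every $n$. Each of these sets is upward closed under $\leq_T$ and Borel, and $X$ is non-arithmetic precisely when it belongs to all of them. The base case $\mc{A}_0$ is the Slaman--Wehner structure, whose spectrum is the non-computable degrees $\{X : X \not\leq_T \emptyset\}$ and which is built from a Wehner family of finite sets. I would obtain $\mc{A}_{n+1}$ from $\mc{A}_n$ by one application of the new \emph{unfriendly jump inversion} operator $I$, so that $\mc{A}_n = I^{n}(\mc{A}_0)$ is uniformly computable in $n$.

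First I would set up $I$ and isolate its two required properties. The first is a spectrum shift: arranged so that if $\mathrm{DgSp}(\mc{C}) = \{X : X \not\leq_T \emptyset^{(n)}\}$ then $\mathrm{DgSp}(I(\mc{C})) = \{X : X \not\leq_T \emptyset^{(n+1)}\}$. The delicate, ``unfriendly'' point is that copies of $I(\mc{C})$ must stay constructible \emph{from $X$ alone} whenever $X \not\leq_T \emptyset^{(n+1)}$ --- in particular for $X$ Turing-incomparable with $\emptyset^{(n+1)}$ --- which is strictly stronger than a naive relativization of Slaman--Wehner to $\emptyset^{(n+1)}$, since that only yields copies from oracles that themselves compute $\emptyset^{(n+1)}$. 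Concretely $I$ would be realized by a coding which, at each quantifier level of the structure, splices in a Wehner-style family so that: $\emptyset^{(n+1)}$-oracles cannot win the associated diagonalization; bad guesses are harmless and get corrected, so no fixed arithmetic set is needed to build a copy; and the resulting back-and-forth types are complete at the new level. The second required property is this last point taken globally: the $\alpha$-back-and-forth relations of $I(\mc{C})$ are as complex as the arithmetic hierarchy permits at each level. This is what lets the iteration $I^{n}$ run: an ordinary ``friendly'' jump inversion keeps the structure too simple, so the iterated spectra would collapse (or stabilize strictly above the non-arithmetic degrees) after finitely many steps; only by injecting fresh maximal complexity at each step does every application raise the hardness by exactly one jump.

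I would then verify $\mathrm{DgSp}(\mc{A}_n) = \{X : X \not\leq_T \emptyset^{(n)}\}$ by induction on $n$, carrying both directions of the base case through the spectrum-shift property: the ``hard'' direction (no $X \leq_T \emptyset^{(n)}$ builds a copy) lifts the lemma that computable sets cannot enumerate the Wehner family, and the ``easy'' direction ($X \not\leq_T \emptyset^{(n)}$ builds a copy) lifts the Wehner enumeration construction. Finally I would assemble $\mc{M}$. The downward inclusion is immediate: an arithmetic $X$ is $\leq_T \emptyset^{(m)}$ for some $m$, so it cannot compute a copy of $\mc{A}_m$, hence not of $\mc{M}$. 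For the upward inclusion, a single non-arithmetic $X$ must build copies of all the $\mc{A}_n$ \emph{uniformly in $n$}; rather than a bare disjoint union $\bigsqcup_n \mc{A}_n$, for which this uniformity would be an extra burden, I would have $\mc{M}$ code the $\mc{A}_n$ together with their finite approximations by one further Wehner-style construction, so that ``guess-and-correct'' makes the assembly automatic from any oracle that defeats every $\emptyset^{(n)}$.

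The main obstacle is the second property of the jump inversion: making the back-and-forth types maximally complicated while keeping copies constructible from $X$ alone rather than from $X \oplus \emptyset^{(n+1)}$. Reconciling a maximally complex type structure with the Wehner-style requirement that the structure be ``easy to build'' is the central tension, and I expect essentially all of the technical weight to lie there; the uniformity needed to glue the $\mc{A}_n$ into $\mc{M}$ is secondary and is absorbed by the extra Wehner layer.
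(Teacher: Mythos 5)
Your plan rests on an intermediate target that is provably unattainable: you propose to build, for every $n$, a structure $\mc{A}_n$ with $\mathrm{DgSp}(\mc{A}_n) = \{X : X \not\leq_T \emptyset^{(n)}\}$, and to get $\mc{A}_{n+1}$ from $\mc{A}_n$ by a jump-inversion-style operator that shifts the spectrum up by exactly one jump. But the non-$\Delta^0_{n}$ degrees, i.e.\ $\{X : X \not\leq_T \emptyset^{(n-1)}\}$, are \emph{not} a degree spectrum once $n \geq 3$ --- this is the theorem of Andrews, Cai, Kalimullin, Lempp, Miller, and Montalb\'an cited in the introduction of the paper, and is precisely why the question was open. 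The ``spectrum shift'' property you posit for $I$ would, at its second application, produce a structure realizing a forbidden spectrum, so no such operator exists. You correctly identify the central tension (an $X$ incomparable to $\emptyset^{(n+1)}$ must build a copy without access to $\emptyset^{(n+1)}$), but you draw the wrong conclusion from it: the resolution is not to strengthen the construction until the spectrum is exactly the non-$\Delta^0_{n+1}$ degrees, because that is impossible.

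The actual fix is to \emph{weaken} the per-level requirement to a sandwich: build $\mc{M}_n$ with no $\emptyset^{(n)}$-computable copy, but with an $X$-computable copy (uniformly) whenever $X \not\leq_T \emptyset^{(2n)}$. The spectrum of $\mc{M}_n$ is thus trapped between the non-$\Delta^0_{n+1}$ and the non-$\Delta^0_{2n+1}$ degrees, and need not equal either; the slack between $n$ and $2n$ is exactly the cost of unfriendliness (the $\Pi^0_{2n}$ bound on the $n$-back-and-forth relations). When you intersect over all $n$ the two bounds both converge to the non-arithmetic degrees, so the multi-sorted union $\mc{M}$ has the right spectrum. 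Concretely, each $\mc{M}_n$ is a Slaman--Wehner-style family built directly over the unfriendly Marker-extension gadget (the pair $\mc{A},\mc{B}$ with $\Pi_{n+2}$ Scott sentences, a $\emptyset^{(n)}$-computable $\Sigma_{n+1}$ separating sentence, and a computable $\Sigma^0_{2n+1}$-indexed sequence $\mc{C}_i$ deciding between them); it is not obtained by iterating an operator $I$ on $\mc{A}_0$. The key asymmetry that makes the unfriendly gadget useful --- a $\emptyset^{(2n)}$-computable graph yields a computable jump inversion, while decoding from the inversion costs $2n$ jumps --- is what lets the copy-from-$X$ construction run without $X$ knowing $\emptyset^{(2n)}$ (it only needs $X$ to escape being $\Sigma^0_{2n+1}$), while still defeating all $\emptyset^{(n)}$-oracles by a recursion-theorem diagonalization. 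Until you replace the exact-spectrum induction with this sandwich, the proof cannot go through.
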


\noindent Thus the non-arithmetic degrees are a degree spectrum. Our strategy is influenced by several recent developments on the computational properties of Scott sentences, and on the road to proving our main theorem we add to these developments. In the next section we take a detour to describe these. The main new technique is a new kind of Marker extensions and jump inversions, which we call \textit{unfriendly Marker extensions} and \textit{unfriendly jump inversions} (because they lead to structures which are not friendly in the sense of Ash and Knight \cite{AshKnight}). We also give several other applications of this technique, e.g., showing that there is a computable structure with a $\Pi_n$ Scott sentence but not computable $\Sigma_{2n}$ Scott sentence, and we anticipate that there should be more applications in the future.

We end this section with a more technical discussion of why it was hard to resolve the question of whether non-arithmetic is a degree spectrum. One can think of the Slaman-Wehner result as saying that the non-$\Delta^0_1$ degrees are a degree spectrum. If we could show that for every $n$, the non-$\Delta^0_n$ degrees are a degree spectrum, and if there was a uniform list of examples, then non-arithmetic would be the degree spectrum of the multisorted structure combining these examples. Though this is true for the non-$\Delta^0_2$ degrees, which Kalimullin \cite{Kalimullin08} showed do form a degree spectrum by a modification of the Slaman-Wehner strategy, for $n \geq 3$ the non-$\Delta^0_n$ degrees are not a degree spectrum. This was shown by Andrews, Cai, Kalimullin, Lempp, Miller, and Montalb\'an \cite{lowercones}. For $n = 3$ for example, their strategy was as follows. If $\mc{M}$ was a structure whose degree spectrum contained all of the non-$\Delta^0_3$ degrees, then the existential theory of $\mc{M}$ would be c.e. They find a $\Delta^0_3$ degree $\mathbf{a}$ and a non-$\Delta^0_3$ degree $\mathbf{b}$ such that $\mathbf{a}' = \mathbf{b}'''$ and $\mathbf{b}''$ has $\mathbf{a}$-c.e.\ degree. Then $\mathbf{b}$ computes a copy of $\mc{M}$. They show, by an argument using Robinson low guessing, that $\mathbf{a}$ can guess at the third jump of $\mathbf{b}$ and thus build a copy of $\mc{M}$, showing that $\mc{M}$ has a $\Delta^0_3$ copy. However no strategy like this could be applied to show that the non-arithmetic degrees are not a degree spectrum: No finite number of jumps of an arithmetic degree can compute a non-arithmetic degree, let alone any of its jumps.

On the other hand, the strategy of Greenberg, Montalb\'an, and Slaman \cite{nonhyp} for showing that the non-hyperarithmetic degrees are a degree spectrum relies on properties of infinite jumps. Namely, together with an argument involving non-standard jump hierarchies, they use the fact that a degree is hyperarithmetic if and only if it is low$_\alpha$ for some computable ordinal $\alpha$, as $\mathbf{0}^{(\alpha)}$ is low$_{\alpha \cdot \omega}$ because
\[ \left(\mathbf{0}^{(\alpha)}\right)^{(\alpha \cdot \omega)} = \mathbf{0}^{(\alpha + \alpha \cdot \omega)} = \mathbf{0}^{(\alpha \cdot \omega)}.\] 
The analogous statement is far from being the case within the arithmetic degrees: $\mathbf{0}'$ is arithmetic but not low$_n$ for any $n$. Instead we need to develop new techniques.

\section{Other applications of unfriendly jump inversions}

In this section we will describe some other applications of our method of unfriendly jump inversions, which will also provide some initial motivation for the method. Proofs of the results in this section appear at the end of the paper in Section \ref{sec:applications}.

One of the main themes in computable structure theory over the least thirty years, especially championed by Ash and Knight in their book \cite{AshKnight} and more recently Montalb\'an in his books \cite{MonBook1,MonBook2}, has been connections between computational properties and structural or definable properties of structures. The logic we use is the infinitary logic $\mc{L}_{\omega_1 \omega}$ which allows countably infinite conjunctions and disjunctions. The complexity of the formulas defining a property are particularly important, where by complexity we mean the number of alternations between existential and universal quantifiers.
\begin{itemize}
	\item A formula is $\Sigma_0$ and $\Pi_0$ if it is finitary quantifier-free.
	\item A formula is $\Sigma_{n+1}$ if it is of the form
	\[ \bigdoublevee_i \exists \bar{x}_i \psi_i(\bar{x}_i)\]
	where each $\psi_i$ is $\Pi_{n}$.
	\item A formula is $\Pi_{n+1}$ if it is of the form
	\[ \bigdoublewedge_i \forall \bar{x}_i \psi_i(\bar{x}_i)\]
	where each $\psi_i$ is $\Pi_{n}$.
\end{itemize}
\noindent (This complexity hierarchy continues through the infinite ordinals but in this paper we will mostly consider the case of finite $n$. This is both for simplicity and because all of the phenomena that we are interested in appear there.)

The intuition is that whenever a structure has some computational property there is a structural reason as expressed in infinitary logic at an appropriate level of complexity. While this is a useful guiding principal it is often only strictly true when we consider all copies of a structure and not just computable copies. For example, consider computable categoricity:
\begin{enumerate}
	\item A computable structure $\mc{A}$ is \textit{computably categorical} if every computable copy $\mc{B}$ of $\mc{A}$ is isomorphic to $\mc{A}$ via an isomorphism that is computable.
	\item A computable structure $\mc{A}$ is \textit{relatively computably categorical} if every copy $\mc{B}$ of $\mc{A}$ is isomorphic to $\mc{A}$ via an isomorphism that is computable relative to $\mc{B}$.
	\item A structure $\mc{A}$ is \textit{computably categorical on a cone} if there is $X$ such that for all $Y \geq_T X$, every $Y$-computable copy $\mc{B}$ of $\mc{A}$ is isomorphic to $\mc{A}$ via an isomorphism that is $Y$-computable.
\end{enumerate}
Relative computable categoricity and computable categoricity on a cone admit characterizations in terms of syntactic definability:  $\mc{A}$ is computably categorical on a cone if and only if the automorphism orbits of $\mc{A}$ are definable by existential formulas, and $\mc{A}$ is relatively computably categorical if and only if the automorphism orbits of $\mc{A}$ are definable by existential formulas and there is a c.e.\ set of such formulas \cite{AshKnightManasseSlaman,Chisholm}. On the other hand there are computable structures which are computably categorical but not relatively computably categorical \cite{Goncharov77}, and computable categoricity does not admit even a hyperarithmetic characterization \cite{CompCat}. The situation is similar more generally for $\Delta^0_\alpha$-categoricity where we ask for $\Delta^0_\alpha$ isomorphisms.

﻿

However under certain effectivity conditions on the structures we are considering we can often recover some reasonable characterization. Goncharov \cite{Goncarov75} showed that if $\mc{A}$ is a computable structure whose finitary $\forall \exists$ diagram is computable, then $\mc{A}$ is computably categorical if and only if $\mc{A}$ is relatively computably categorical if and only if there is a c.e.\ collection of existential formulas defining the automorphism orbits of $\mc{A}$. For $\Delta^0_\alpha$-categoricity we have similar theorems of Ash \cite{Ash87}, where in the effectiveness conditions the key is to understand the back-and-forth relations of the structures.
\begin{definition}
	We define the standard asymmetric back-and-forth relations $(\mc{A},\bar{a}) \leq_n (\mc{B},\bar{b})$ inductively as follows.
	\begin{enumerate}
		\item $(\mc{A},\bar{a}) \leq_0 (\mc{B},\bar{b})$ if and only if $\bar{a}$ and $\bar{b}$ satisfy the same atomic and negated atomic formulas (using the first $|\bar{a}|$-many symbols).
		\item $(\mc{A},\bar{a}) \leq_n (\mc{B},\bar{b})$ if for every $\bar{b}' \in \mc{B}$ there is $\bar{a}' \in \mc{A}$ such that $(\mc{B},\bar{b}\bar{b}') \leq_{n-1} (\mc{A},\bar{a}\bar{a}')$.
	\end{enumerate}
	We write $\mc{A} \leq_n \mc{B}$ if $(\mc{A},\varnothing) \leq_n (\mc{B},\varnothing)$. We also write $\mc{A} \equiv_n \mc{B}$ if $\mc{A} \leq_n \mc{B}$ and $\mc{B} \leq_n \mc{A}$.
\end{definition}

\noindent This particular definition of the back-and-forth relations is intimately connected with the infinitary logic $\mc{L}_{\omega_1 \omega}$: $\mc{A} \leq_n \mc{B}$ if and only if every $\Pi_n$ sentence true of $\mc{A}$ is true of $\mc{B}$, and also if and only if every $\Sigma_n$ sentence true of $\mc{B}$ is true of $\mc{A}$ \cite{Karp}.

A structure or class of structures is said by Ash and Knight \cite{AshKnight} to be $n$-friendly if the $n$-back-and-forth relations are c.e.  Montalb\'an in his recent book \cite{MonBook1} requires instead that the back-and-forth relations be computable (but does not use the term ``friendly''). Taking poetic license, we will adopt the terminology of Ash and Knight but take the stronger definition from Montalb\'an (a decision which is of no real consequence as we will use friendliness only in a motivational role and not in our theorems or proofs).

\begin{definition}
	Let $\mathbb{K} = \{\mc{A}_0,\mc{A}_1,\ldots\}$ be a uniformly computable list of finitely many or countably many structures. $\mathbb{K}$ is $n$-friendly if the $n$-back-and-forth relations between tuples in structures from $\mathbb{K}$ are computable, i.e., if
	\[ \{ (i,\bar{a},j,\bar{b},m) : m \leq n \text{ and } (\mc{A}_i,\bar{a}) \leq_m (\mc{A}_j,\bar{b})\}\]
	is computable.
\end{definition}
\noindent In Chapter 15 of \cite{AshKnight}, Ash and Knight show that a number of structures, such as the standard copies of the computable ordinals, are $n$-friendly for all $n$.

We will give another example which is the pair-of-structures theorem of Ash and Knight \cite{AshKnightPairs}. This can be thought of as an invariant version of the Louveau and Saint-Raymond separation theorem \cite{LSR87}. Given two structures $\mc{A}$ and $\mc{B}$ we ask how hard it is to tell $\mc{A}$ and $\mc{B}$ apart. If there is a $\Sigma_\alpha$ sentence true of $\mc{A}$ and $\mc{B}$, then this gives a $\bfSigma^0_\alpha$ way to tell them apart. The pair of structures theorem says that otherwise it is $\bfPi^0_\alpha$-hard to tell them apart. We begin by stating the relative or boldface version.
\begin{theorem}[Ash and Knight]\label{thm:pairs-bold}
	Let $\mc{A}$ and $\mc{B}$ be structures such that $\mc{B} \leq_n \mc{A}$. Then for any $\bfPi^0_n$ set $S \subseteq 2^\mathbb{N}$, there is a continuous map $x \mapsto \mc{C}_x$ such that
	\[ \mc{C}_x \cong \begin{cases}
		\mc{A} & x \in S\\
		\mc{B} & x \notin S\\
	\end{cases}.\]
\end{theorem}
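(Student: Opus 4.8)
The plan is to prove the statement by induction on $n$, using the back‑and‑forth hypothesis $\mc{B} \leq_n \mc{A}$ precisely to license ``switching targets'' partway through a construction of a copy. Throughout, fix copies of $\mc{A}$ and $\mc{B}$ with domain $\mathbb{N}$, and build $\mc{C}_x$ as the increasing union of a stagewise sequence of finite partial structures in which only $x \restriction s$ has been consulted by stage $s$, so that $x \mapsto \mc{C}_x$ is automatically continuous. For the base case $n = 0$ a $\bfPi^0_0$ set $S$ is clopen, so membership in $S$ is decided by a bounded initial segment of $x$; having read it, output a fixed copy of $\mc{A}$ or of $\mc{B}$ accordingly (no hypothesis on $\mc{A},\mc{B}$ is used here).

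The case $n=1$ already exhibits the key mechanism. Here $S$ is closed, so $x \notin S$ is witnessed by some finite initial segment of $x$. Build $\mc{C}_x$ by enumerating $\mc{A}$, maintaining a finite embedding of the part built so far onto a finite tuple $\bar{a} \in \mc{A}$, under the default assumption $x \in S$. If at some stage the portion of $x$ seen so far witnesses $x \notin S$, then since $\mc{B} \leq_1 \mc{A}$ there is $\bar{b} \in \mc{B}$ with $(\mc{A},\bar{a}) \leq_0 (\mc{B},\bar{b})$, i.e.\ with the same quantifier‑free type; reinterpret the finite part built so far (whose atomic diagram is unchanged by this) as $\bar{b}$ and henceforth enumerate $(\mc{B},\bar{b})$ instead. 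If the switch never happens then $\mc{C}_x \cong \mc{A}$; if it does then $\mc{C}_x \cong \mc{B}$.

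For the inductive step, write $2^{\mathbb{N}} \setminus S = \bigcup_i R_i$ with each $R_i$ a $\bfPi^0_{n-1}$ set, and replace $R_i$ by $\bigcup_{j \leq i} R_j$ so that $R_0 \subseteq R_1 \subseteq \cdots$; thus $x \in S$ iff $x \notin R_i$ for every $i$, while $x \notin S$ iff $x \in R_i$ for all large $i$. Run a copy of $\mc{A}$ by default, processing the $R_i$ in turn; at each stage there is a current finite tuple $\bar{a} \in \mc{A}$ realized by the portion of $\mc{C}_x$ built so far. When we start hedging against $R_i$, use $\mc{B} \leq_n \mc{A}$ to choose $\bar{b} \in \mc{B}$ with $(\mc{A},\bar{a}) \leq_{n-1} (\mc{B},\bar{b})$, and apply the inductive hypothesis (in its form for structures with a distinguished tuple) to the $\bfPi^0_{n-1}$ set $R_i$ and this pair: this yields a continuous way to continue $\mc{C}_x$ so that it becomes a copy of $(\mc{B},\bar{b})$ when $x \in R_i$ and a copy of $(\mc{A},\bar{a})$ when $x \notin R_i$. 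Since the relations $\leq_m$ decrease with $m$ (a $\Pi_{m-1}$ sentence is a $\Pi_m$ sentence, so $\leq_n \subseteq \leq_{n-1} \subseteq \cdots \subseteq \leq_0$), the finite part already built is consistent with either continuation, so each switch is legal; and because the hedges are nested — the hedge against $R_{i+1}$ is deployed only within the ``$x \notin R_i$'' branch — on $S$ every hedge resolves toward $\mc{A}$ and, with fair bookkeeping (adjoining the least unplaced element of $\mc{A}$ whenever we are in an $\mc{A}$‑state), $\mc{C}_x$ becomes a full copy of $\mc{A}$; off $S$, the least $i$ with $x \in R_i$ forces a permanent continuation in the $(\mc{B},\bar{b})$‑branch and $\mc{C}_x \cong \mc{B}$.

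The main obstacle is to make the nesting of infinitely many hedges precise while maintaining continuity and fairness simultaneously — in particular the inductive statement must be strengthened so that the ``$x \notin R_i$'' branch of the construction for $R_i$ is itself delivered as an interruptible stagewise enumeration of $\mc{A}$, so that the hedge against $R_{i+1}$ can be grafted onto it. This is exactly the bookkeeping handled by the Ash--Knight metatheorems on building structures from systems of finite approximations with true stages; concretely one either phrases the recursion as constructing such a system, or carries it out directly against a tree of requirements indexed by a $\bigcap_{k_1}\bigcup_{k_2}\bigcap_{k_3}\cdots$ normal form for $S$, paralleling the proof of the Louveau--Saint-Raymond separation theorem. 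Making each step of the recursion uniform then yields the effective (lightface) refinement, in which the complexity of $\mc{C}_x$ is controlled in terms of $x$ and the back‑and‑forth relations of $\mc{A}$ and $\mc{B}$.
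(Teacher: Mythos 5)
The paper does not prove this theorem; it cites it to Ash and Knight and only uses it as motivation, so there is no internal proof to compare against. Measured against the standard argument, your high-level plan is the right one: you decompose the $\bfPi^0_n$ set, run $\mc{A}$ by default, and use $\mc{B}\leq_n\mc{A}$ (unwound via the definition: for every $\bar{a}\in\mc{A}$ there is $\bar{b}\in\mc{B}$ with $(\mc{A},\bar{a})\leq_{n-1}(\mc{B},\bar{b})$) to license a switch at the next level down; the directions of the relations in your inductive-step application are correct, and the $n=0,1$ cases are handled correctly.

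That said, there is a genuine gap, and it is precisely where the theorem's real content lies. Your inductive hypothesis, even in its tuple form, only delivers a \emph{finished} continuous map $x\mapsto\mc{C}'_x$. It gives you no handle to splice the hedge against $R_{i+1}$ into the ``$x\notin R_i$'' branch: that branch is not exposed as an interruptible, commitment-tracking stagewise enumeration, so you cannot ``re-enter'' it, and the nested application of the IH you describe is not actually available. You identify this obstacle yourself and gesture at the fix (strengthen the inductive statement so that the $\mc{A}$-side branch is a continuable construction; alternatively, build an $\alpha$-system/true-stages scaffold or work directly against a tree of requirements in the style of the Louveau--Saint-Raymond argument). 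But naming the obstacle and pointing at the toolbox is where the proposal stops; the strengthened recursion lemma is never stated, the invariant that lets one hedge hand off to the next while preserving continuity in $x$ and fairness of the enumeration of $\mc{A}$ is never formulated, and the induction is never run against the strengthened statement. In Ash and Knight's development this bookkeeping \emph{is} the proof --- it is what the $\alpha$-system machinery packages --- so without it the proposal is a correct strategy sketch rather than a proof. To complete it, either set up and verify an appropriate $\alpha$-system for the pair $(\mc{A},\mc{B})$ and the given $\bfPi^0_n$ decomposition, or state and prove the strengthened, tuple-relativized, ``interruptible-output'' version of the theorem and carry out your induction on that.
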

\noindent The continuous construction is computable relative to some parameter. To get a computable or lightface version of the theorem---which was the original version proved by Ash and Knight---we need the structures to be $n$-friendly.
\begin{theorem}[Ash and Knight]\label{thm:pairs-light}
	Let $\mc{A}$ and $\mc{B}$ be computable structures such that $\mc{B} \leq_n \mc{A}$ and $\{\mc{A},\mc{B}\}$ is $n$-friendly. Then for any $\Pi^0_n$ set $S \subseteq 2^\mathbb{N}$, there is a computable map $x \mapsto \mc{C}_x$ such that
	\[ \mc{C}_x \cong \begin{cases}
		\mc{A} & x \in S\\
		\mc{B} & x \notin S\\
	\end{cases}.\]
\end{theorem}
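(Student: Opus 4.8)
The plan is to derive the lightface statement from the boldface Theorem~\ref{thm:pairs-bold} by auditing where the parameter is used. The construction behind Theorem~\ref{thm:pairs-bold} is computable relative to a parameter $p$, and that $p$ is needed for only three things: (i) reading off the atomic diagrams of $\mc{A}$ and $\mc{B}$; (ii) deciding the back-and-forth relations $\leq_m$ for $m \leq n$ between tuples of $\mc{A}$ and of $\mc{B}$; and (iii) producing an approximation of $S$ witnessing $S \in \bfPi^0_n$. Under the hypotheses of the lightface theorem all three become available with no parameter: $\mc{A}$ and $\mc{B}$ are computable, so (i) is computable and uniform; $\{\mc{A},\mc{B}\}$ is $n$-friendly, so (ii) is computable by definition; and $S$ is (lightface) $\Pi^0_n$, so (iii) is computable. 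Substituting these computable oracles into the boldface construction in place of $p$ yields a computable map $x \mapsto \mc{C}_x$ with $\mc{C}_x \cong \mc{A}$ when $x \in S$ and $\mc{C}_x \cong \mc{B}$ otherwise, which is exactly the lightface theorem.

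To justify that (i)--(iii) really are all that the construction consumes, it is worth recalling its shape. Fix a $\Pi^0_n$ normal form $x \in S \iff \forall k_1\, \exists k_2 \cdots Q_n k_n\, R(x,k_1,\dots,k_n)$ with $R$ computable, and run the standard guessing-tree evaluation of $S(x)$: a tree of height $n$, a true path $\pi$, and a structure $\mc{D}_\sigma \in \{\mc{A},\mc{B}\}$ attached to each node $\sigma$ according to the parity of $|\sigma|$. One builds $\mc{C}_x$ by finite extension, at each stage committing more of its atomic diagram --- say on a tuple $\bar c$ --- and maintaining, for the current guessing node $\sigma$, a tuple $\bar d$ in $\mc{D}_\sigma$ witnessing a back-and-forth relation of level $n-|\sigma|$ between $(\mc{C}_x,\bar c)$ and $(\mc{D}_\sigma,\bar d)$ (with the direction alternating with $|\sigma|$). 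Data (i) provides the atomic facts needed to extend $\mc{C}_x$ and to move around inside $\mc{A}$ and $\mc{B}$; data (ii) lets one search effectively for the witnessing tuples; and data (iii) governs the movement of the guessing node. Along $\pi$ the level-$0$ case of the invariant forces the committed part of $\mc{C}_x$ to match, atomically, an initial segment of a copy of $\mc{A}$ if $x \in S$ and of $\mc{B}$ if $x \notin S$, and one extends it to a full such copy.

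The only genuinely delicate step is the one that re-establishes the invariant when the guessing node is revised, i.e.\ when the construction passes from $\sigma$ to a node to its right or to a proper initial segment of it: one must then choose a new target $\mc{D}_{\sigma'}$ and a new witnessing tuple consistent with every atomic fact already committed about $\mc{C}_x$. That such a choice always exists is precisely where the hypothesis $\mc{B} \leq_n \mc{A}$ is used --- unwound through the back-and-forth relations it says exactly that the needed witnesses are present --- and carrying this out uniformly is the content of the Ash--Knight metatheorem on $\alpha$-systems (equivalently, of a carefully arranged $n$-level nested priority argument). This is the main obstacle. Granting it, passing from the boldface to the lightface theorem is the routine matter of checking that the only oracle use is (i)--(iii), each of which the stated hypotheses make computable.
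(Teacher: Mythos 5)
This theorem is not proved in the paper: it is stated as background and attributed to Ash and Knight, who prove it (as Theorem~18.7 of their book) via the metatheorem on $\alpha$-systems, and the paper explicitly remarks that the lightface version is "the original version proved by Ash and Knight," with the boldface version obtained from it by relativization. Your proposal inverts this logical order. The boldface theorem, as stated, only asserts the existence of a continuous map; it does not by itself come with a specification of where the continuity parameter is consumed, so one cannot literally "audit the parameter use" of Theorem~\ref{thm:pairs-bold} without already exhibiting the underlying parameter-free construction --- which is exactly the content of the lightface theorem. You tacitly recognize this ("it is worth recalling its shape") and then sketch the lightface construction directly, so the first paragraph is more rhetorical framing than mathematical content. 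The substantive parts of your sketch --- that the hypotheses supply computable atomic diagrams, computable back-and-forth relations up to level $n$ (this is precisely what $n$-friendliness means in this paper), and a computable $\Pi^0_n$ approximation to $S$; that one maintains a back-and-forth witness at the appropriate level against the currently believed target; and that the crux is re-establishing the invariant under injury --- are all correct in outline. But you explicitly grant the crux: "carrying this out uniformly is the content of the Ash--Knight metatheorem... Granting it, passing from the boldface to the lightface theorem is the routine matter..." That metatheorem is the entire technical weight of the result, so what you have written is a plausible account of which hypotheses feed which part of the argument, not a proof. For a theorem the paper itself cites rather than proves this may be an acceptable level of detail, but the "derive lightface from boldface" framing should be dropped in favour of stating outright that one is reproducing the (lightface) Ash--Knight construction and observing that its only inputs are (i)--(iii).
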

Even if a structure or class of structures is not $n$-friendly, the back-and-forth relations
\[ \{ (i,\bar{a},j,\bar{b},m) : m \leq n \text{ and } (\mc{A}_i,\bar{a}) \leq_m (\mc{A}_j,\bar{b})\}\]
are always $\mathbf{0}^{(2n)}$-computable. This comes from simply writing out the inductive characterization of the back-and-forth relations. (Chen, Gonzalez, Harrison-Trainor \cite{ChenGonzalezHT} have shown that the relations $\leq_n$ cannot be defined with fewer than $2n$ alternations of quantifiers, in the sense that $\{(\mc{A},\mc{B}) : \mc{A} \leq_n \mc{B}\}$ is $\bfPi^0_{2n}$-complete.) Thus, for example, in the computable pairs-of-structures theorem, even if $\{\mc{A},\mc{B}\}$ is not $n$-friendly then we can get a $\mathbf{0}^{(2n)}$-computable map $x \mapsto \mc{C}_x$. Similarly, if a computable structure is $\Delta^0_n$-categorical on a cone, then the base of the cone can be taken to be $\mathbf{0}^{(2n)}$.

Though there are many known examples of structures which are not $n$-friendly, we generally do not have examples which are maximally unfriendly. In general, our examples are structures in which the $n$-back-and-forth relations compute $\mathbf{0}^{(n+1)}$ or $\mathbf{0}^{(n+2)}$, but they do not compute the upper bound of $\mathbf{0}^{(2n)}$. Our new technique of unfriendly jump inversions is exactly suited for proving such results, and so we show:

\begin{restatable}{theorem}{maxunfriendly}
	For each $n$ there is a computable structure $\mc{A}$ such that the $n$-back-and-forth relation $\{(\bar{a},\bar{b}) \in \mc{A}^{<\omega} \times \mc{A}^{< \omega} : \bar{a} \leq_n \bar{b}\}$ on $\mc{A}$ is $\Pi^0_{2n}$-complete and hence computes $\mathbf{0}^{(2n)}$.
\end{restatable}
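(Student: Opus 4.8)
The plan is to derive the theorem from the unfriendly Marker extension / jump inversion of this paper, applied $n-1$ times to a carefully chosen seed, while tracking how the complexity of the back-and-forth relations grows at each step. Only hardness needs proof: as noted above, for any computable structure the $n$-back-and-forth relation on its tuples is $\Pi^0_{2n}$ --- one just unwinds the inductive definition, which alternates a block of universal and a block of existential quantifiers $n$ times over a computable matrix --- and a $\Pi^0_{2n}$-complete set is Turing equivalent to $\mathbf{0}^{(2n)}$, so the ``hence'' clause is automatic. Thus it is enough to produce, for each $n$, a computable structure $\mc{A}$ on whose tuples $\leq_n$ is $\Pi^0_{2n}$-hard.

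I would argue by induction on $n$. The base case $n=1$ asks for a computable structure $\mc{A}_1$ whose $1$-back-and-forth relation is $\Pi^0_2$-complete. This can be done by hand: take a disjoint union of components $\mc{D}_e$, in each of which two distinguished elements $u_e \equiv_0 v_e$ are set up so that the atomic $1$-types realized over $u_e$ form a fixed infinite computable set, while the atomic $1$-types realized over $v_e$ are enumerated in lockstep with $W_e$; one arranges things so that $(\mc{A}_1, v_e) \leq_1 (\mc{A}_1, u_e)$ iff the latter enumeration eventually covers the former set, i.e.\ iff $e$ belongs to a fixed $\Pi^0_2$-complete set, with an auxiliary component realizing every atomic type that occurs anywhere so that tuples straddling several components cause no trouble. (Alternatively the base case follows from the $\Pi^0_2$-completeness of $\{(\mc{B},\mc{C}) : \mc{B} \leq_1 \mc{C}\}$ established by Chen, Gonzalez, and Harrison-Trainor \cite{ChenGonzalezHT} together with a standard disjoint-union packaging.)

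The inductive step uses the unfriendly jump inversion: an operation $\mc{C} \mapsto \partial\mc{C}$ on computable structures, with a uniformly computable map $\bar c \mapsto \bar c^{\ast}$ on tuples, such that $(\partial\mc{C}, \bar c_0^{\ast}) \leq_{k+1} (\partial\mc{C}, \bar c_1^{\ast})$ iff $(\mc{C}, \bar c_0) \leq_{k} (\mc{C}, \bar c_1)$, and --- this is the ``unfriendly'' part --- such that the level-$(k+1)$ back-and-forth relation of $\partial\mc{C}$ is $\Pi^0_2$ relative to the level-$k$ relation of $\mc{C}$ \emph{and no simpler}: probing the Marker gadget over each element of $\mc{C}$, the level-$(k+1)$ game on $\partial\mc{C}$ between suitable tuples realizes an arbitrary $\Pi^0_2$ combination $\forall i\, \exists j\, [(\mc{C}, \bar p_{ij}) \leq_{k} (\mc{C}, \bar q_{ij})]$ of level-$k$ comparisons in $\mc{C}$. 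Then $\mc{A} := \partial^{\,n-1}\mc{A}_1$ works: by induction the level-$k$ relation of $\partial^{\,k-1}\mc{A}_1$ is $\Pi^0_{2k}$-complete, hence of Turing degree $\mathbf{0}^{(2k)}$, so the level-$(k+1)$ relation of $\partial^{\,k}\mc{A}_1$ is $\Pi^0_2$ in $\mathbf{0}^{(2k)}$, that is $\Pi^0_{2k+2}$, and by the ``no simpler'' clause it is $\Pi^0_{2k+2}$-complete; at $k+1 = n$ this is the theorem.

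The main obstacle is the construction and analysis of $\partial$ --- really the ``no simpler'' clause, where the new technique does its work. There is an inherent tension: $\partial\mc{C}$ must be a \emph{computable} structure, so the family of Marker gadgets and all atomic facts about them are decidable, yet one needs no shortcut below $\Pi^0_{2k+2}$ for deciding $\leq_{k+1}$ on it. This calls for (i) an exact combinatorial description of which pairs of tuples of $\partial\mc{C}$ stand in $\leq_{k+1}$ --- the gadgets must be rigid enough for this description to be available and for $\bar c \mapsto \bar c^{\ast}$ to behave as claimed, yet flexible enough that the level-$(k+1)$ game can expose an arbitrary $\forall i\, \exists j$ over level-$k$ facts of $\mc{C}$ without those facts becoming decidable from anything weaker; and (ii) a reduction from a $\Pi^0_{2k+2}$-complete set, obtained by composing the inductive reduction into the level-$k$ relation of $\mc{C}$ with the gadget's coding of a generic $\Pi^0_2$ predicate and checking that the two codings do not interfere. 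Verifying that $\partial\mc{C}$ is uniformly computable from $\mc{C}$ and that $\bar c \mapsto \bar c^{\ast}$ is computable is routine once the gadgets are written out.
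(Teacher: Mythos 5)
Your plan is genuinely different from the paper's: you iterate a one-step unfriendly jump inversion $\partial$ starting from a hand-built seed $\mc{A}_1$ and try to track tuple-wise back-and-forth relations through each application, whereas the paper invokes Theorem~\ref{thm:jump-inversion-structures} at level $n-1$ once, as a black box, and then performs a single flat coding. But there is a real gap at the heart of your inductive step. You posit an operation $\partial$ with two properties --- an exact level shift $(\partial\mc{C}, \bar{c}_0^{\ast}) \leq_{k+1} (\partial\mc{C}, \bar{c}_1^{\ast}) \Longleftrightarrow (\mc{C}, \bar{c}_0) \leq_{k} (\mc{C}, \bar{c}_1)$ valid for all $k$, and a ``no simpler'' clause saying the level-$(k+1)$ relation on $\partial\mc{C}$ realizes arbitrary $\forall i\,\exists j$ combinations of level-$k$ facts of $\mc{C}$ --- and you yourself flag establishing these as ``the main obstacle,'' but you never prove them. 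Neither follows from Theorem~\ref{thm:jump-inv}, which only tracks copies, translated sentences, and Scott sentences, not tuple-wise back-and-forth relations. The exact level shift for all $k$ is in fact delicate: the one-step gadgets $\mc{A},\mc{B}$ have $\Pi_3$ Scott sentences and so non-trivial back-and-forth structure of their own past level $1$, so at depth $k+1 \geq 3$ the $(k+1)$-game on $\partial\mc{C}$ does not cleanly project to a $k$-game on $\mc{C}$ without further argument. The ``no simpler'' hardness is also not automatic: for a \emph{fixed} computable $\mc{C}$ there is no reason its tuples happen to realize the particular level-$k$ facts needed to encode a generic $\Pi^0_2$-over-$\Pi^0_{2k}$ predicate.

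For comparison, the paper avoids iteration entirely. Fix a $\Pi^0_{2n}$-complete set $U$, write $k \in U \Leftrightarrow \forall\ell\,(k,\ell)\in S$ with $S$ a complete $\Sigma^0_{2n-1}$ set, and invoke Theorem~\ref{thm:jump-inversion-structures} at level $n-1$ to get $\mc{A}\ncong\mc{B}$ with $\mc{B}\nleq_n\mc{A}$ and a uniformly computable sequence $(\mc{C}_{k,\ell})$ with $\mc{C}_{k,\ell}\cong\mc{A}$ iff $(k,\ell)\in S$. One then builds a single computable structure $\mc{M}$ with base elements $a_k$, attaching to $a_k$ the gadgets $(\mc{C}_{k,\ell})_\ell$, plus one reference element $a^\ast$ with every gadget $\cong\mc{A}$; then $(\mc{M},a^\ast)\geq_n(\mc{M},a_k)$ iff $k\in U$, since any gadget $\cong\mc{B}$ on $a_k$ witnesses a $\Pi_n$ sentence failing in $\mc{A}$. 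This gives a direct $1$-reduction from $U$ to the $n$-back-and-forth relation on $\mc{M}$, with no need to analyze how that relation propagates under $\partial$. The work needed to establish your claimed properties of $\partial$ is essentially what the paper already packaged into the inductive proof of Theorem~\ref{thm:jump-inversion-structures}, so it is cleaner to consume that theorem directly.
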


\noindent Moreover, for many of the various theorems in which $n$-friendliness is assumed our unfriendly jump inversions give a way to construct the best possible counterexamples in the absence of $n$-friendliness. We give several applications, some of which answer questions which have been posed in the literature, but this is by no means a complete list of the possibilities. We list these applications now, with the proofs appears in Section \ref{sec:applications}.

\subsection{Computability of Scott sentences}

Let $\mc{A}$ be a countable structure, and suppose that we want to characterize $\mc{A}$ up to isomorphism by writing down a description of $\mc{A}$ in some formal language. If we work in elementary first-order logic then, as a consequence of compactness, we cannot do this for most countable structures $\mc{A}$. However Scott \cite{Sco65} showed that if we use the infinitary logic $\mc{L}_{\omega_1 \omega}$ then we can characterize any countable structure up to isomorphism. For any countable structure $\mc{A}$ there is a sentence $\varphi$ of $\mc{L}_{\omega_1 \omega}$ such that for all countable $\mc{B}$,
\[ \mc{B} \models \varphi \Longleftrightarrow \mc{A} \cong \mc{B}.\]
We call such a sentence a \emph{Scott sentence} for $\mc{A}$. While being isomorphic to a fixed structure $\mc{A}$ is naively analytic, because $\mc{A}$ has a Scott sentence it is actually Borel (in $\Mod(\mc{L})$, the Polish space of presentations of countable $\mc{L}$-structures).

One way to measure the complexity of a structure is the least complexity of a Scott sentence for that structure.

\begin{definition}[Montalb\'an \cite{MonSR}, see also \cite{MonBook1}]
	The \textit{(unparametrized) Scott rank} of $\mc{A}$ is the least $n$ such that $\mc{A}$ has a $\Pi_{n+1}$ Scott sentence.
\end{definition}

\noindent This is a robust measure of the complexity of $\mc{A}$, and has several equivalent characterizations both externally in terms of other structures and internally in terms of automorphism orbits.

\begin{theorem}[Montalb\'an \cite{MonSR}]
	Let $\mc{A}$ be a countable structure, and $\alpha$ a countable ordinal. The following are equivalent:
	\begin{enumerate}
		\item $\mc{A}$ has a $\Pi_{\alpha+1}$ Scott sentence.
		\item The set $\{\mc{B} \in \Mod(\mc{L}) : \mc{B} \cong \mc{A}\}$ of isomorphic copies of $\mc{A}$ is $\mathbf{\Pi}^0_{\alpha+1}$.
		\item Every automorphism orbit in $\mc{A}$ is $\Sigma_\alpha$-definable.
		\item $\mc{A}$ is (boldface) $\mathbf{\Delta}^0_\alpha$-categorical.
	\end{enumerate}
\end{theorem}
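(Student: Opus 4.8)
The plan is to establish the equivalences as three linked biconditionals: the \emph{external} one $(1)\Leftrightarrow(2)$, a level-by-level version of the Lopez--Escobar theorem; the \emph{internal} one $(1)\Leftrightarrow(3)$, run through the canonical formulas for back-and-forth types; and the \emph{categoricity} one $(3)\Leftrightarrow(4)$, via a back-and-forth construction of isomorphisms together with the pairs-of-structures theorem. Together these three biconditionals yield all the claimed equivalences. The object that ties everything together is, for each tuple $\bar a$ from $\mc A$, the canonical $\Sigma_\alpha$ formula $\varphi^\alpha_{\bar a}(\bar x)$ describing the $\alpha$-back-and-forth type of $\bar a$ (so that $\mc B\models\varphi^\alpha_{\bar a}(\bar b)$ iff $(\mc A,\bar a)$ and $(\mc B,\bar b)$ are comparable at level $\alpha$, in the direction matching the Karp characterization cited above); statement $(3)$ says precisely that each $\varphi^\alpha_{\bar a}$ may be taken to \emph{define} the automorphism orbit of $\bar a$.

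For $(1)\Rightarrow(2)$ I would run the routine ``syntax to Borel complexity'' induction: atomic formulas cut out clopen subsets of $\Mod(\mc L)$, countable conjunctions and disjunctions give countable intersections and unions, and a quantifier over elements becomes (since structures in $\Mod(\mc L)$ have domain $\omega$) a countable union or intersection; tracking the levels, a $\Pi_{\alpha+1}$ sentence defines a $\bfPi^0_{\alpha+1}$ set, and a Scott sentence for $\mc A$ cuts out exactly $\{\mc B:\mc B\cong\mc A\}$. The converse $(2)\Rightarrow(1)$ is the substantive half, a level-by-level refinement of Lopez--Escobar: the Vaught transforms convert a $\bfPi^0_{\alpha+1}$ Borel code for the isomorphism-invariant class $\{\mc B:\mc B\cong\mc A\}$, uniformly and with no increase in quantifier complexity, into a sentence of $\mc{L}_{\omega_1\omega}$ of complexity $\Pi_{\alpha+1}$ whose countable models are exactly the members of that class --- so it is a Scott sentence for $\mc A$ (there is no lightface/boldface obstruction because $\mc{L}_{\omega_1\omega}$ already permits arbitrary countable conjunctions; only a little care is needed at the very bottom of the hierarchy).

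For $(3)\Rightarrow(1)$, assuming each orbit is defined by $\varphi^\alpha_{\bar a}$, I would write down the sentence
\[ \varphi^{\alpha}_{\varnothing}\ \wedge\ \bigwedge_{\bar a}\ \forall\bar x\,\Big[\,\varphi^{\alpha}_{\bar a}(\bar x)\ \rightarrow\ \Big(\,\bigwedge_{b}\exists y\,\varphi^{\alpha}_{\bar a b}(\bar x,y)\ \wedge\ \forall y\,\bigvee_{b}\varphi^{\alpha}_{\bar a b}(\bar x,y)\,\Big)\,\Big], \]
with the conjunctions and disjunctions ranging over tuples and elements of $\mc A$; since each $\varphi^\alpha_{\bar a}$ is $\Sigma_\alpha$, a quantifier count makes the whole sentence $\Pi_{\alpha+1}$, and the usual back-and-forth argument (extending a finite partial map between $\mc A$ and a model of the sentence while preserving the $\varphi^\alpha_{\bar a}$'s, which now detect orbits) shows it is a Scott sentence. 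The reverse implication $(1)\Rightarrow(3)$ is the delicate one, best argued contrapositively: if the orbit of some $\bar a$ is not $\Sigma_\alpha$-definable --- equivalently, if $\varphi^\alpha_{\bar a}$ fails to isolate it --- then a back-and-forth/amalgamation construction produces a structure $\mc B$ with $\mc A\leq_{\alpha+1}\mc B$ but $\mc B\not\cong\mc A$, and since a $\Pi_{\alpha+1}$ Scott sentence of $\mc A$ must (by the Karp characterization) hold in any such $\mc B$, this contradicts $(1)$. The subtlety here --- and the part I expect to be the main obstacle --- is that ``$\varphi^\alpha_{\bar a}$ defines the orbit'' is genuinely stronger than ``the orbit is a union of $\equiv_\alpha$-classes,'' so one must carry out the construction of $\mc B$ carefully enough to witness the failure of the stronger statement.

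Finally, for $(3)\Rightarrow(4)$: given $(3)$, I would build an isomorphism between two copies $\mc B,\mc C$ of $\mc A$ by a back-and-forth whose stages keep the finite partial map orbit-preserving; orbit membership is witnessed by the $\Sigma_\alpha$ formula $\varphi^\alpha_{\bar a}$, and since $\Sigma_\alpha$ facts about $\mc B,\mc C$ are decidable from the $\alpha$-th jump of $\mc B\oplus\mc C$ (relative to a fixed parameter), an Ash-style metatheorem bookkeeping of which witnesses to commit to yields an isomorphism of complexity exactly $\mathbf{\Delta}^0_\alpha$ rather than $\mathbf{\Delta}^0_{\alpha+1}$. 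For $(4)\Rightarrow(3)$ I would again argue by contraposition: if the orbit of some $\bar a$ is not $\Sigma_\alpha$-definable, the canonical-formula analysis of the previous paragraph gives a copy $\mc B\cong\mc A$ and a tuple $\bar b$ in $\mc B$ with $(\mc B,\bar b)\equiv_\alpha(\mc A,\bar a)$ but $\bar b$ not in the orbit of $\bar a$; feeding $(\mc A,\bar a)$ and $(\mc B,\bar b)$ into the boldface pairs-of-structures theorem (Theorem~\ref{thm:pairs-bold}) with a properly $\bfPi^0_\alpha$ set $S$ produces copies of $\mc A$ in which no $\mathbf{0}^{(\alpha)}$-oracle can locate the image of $\bar a$, so there is no $\mathbf{\Delta}^0_\alpha$ isomorphism to the standard copy and $\mc A$ fails to be $\mathbf{\Delta}^0_\alpha$-categorical. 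Across the whole argument, the genuine difficulties are the hard direction of Lopez--Escobar and --- on the internal side --- extracting true $\Sigma_\alpha$-\emph{definability} of orbits from the Scott sentence while keeping every complexity bound pinned exactly at level $\alpha$, which is exactly the bookkeeping that the back-and-forth relations and the pairs-of-structures theorem are designed to control.
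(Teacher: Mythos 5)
The paper does not prove this theorem; it cites it as Montalb\'an's result from \cite{MonSR}, so there is no in-paper proof to compare against. Taken on its own terms, your decomposition into the three linked biconditionals---(1)$\Leftrightarrow$(2) by a level-by-level Lopez--Escobar/Vaught-transform argument, (3)$\Leftrightarrow$(1) via the canonical Scott sentence built from back-and-forth formulas, and (3)$\Leftrightarrow$(4) via an orbit-preserving back-and-forth together with the pairs-of-structures theorem---is the standard route and is essentially the architecture of Montalb\'an's proof. The ingredients you name (Vaught transforms for the hard half of Lopez--Escobar, the canonical $\Pi_{\alpha+1}$ sentence for $(3)\Rightarrow(1)$, $\mathbf{\Delta}^0_\alpha$-decidability of orbit membership from $\Sigma_\alpha$ and complementary $\Sigma_\alpha$ definitions for $(3)\Rightarrow(4)$, and the boldface pairs theorem for $(4)\Rightarrow(3)$) are the right ones, and the complexity counts you give are correct.

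Two places deserve a flag. First, you treat ``$\varphi^\alpha_{\bar a}$'' as a single canonical $\Sigma_\alpha$ formula capturing the $\alpha$-back-and-forth type, but one has to be careful which direction of $\leq_\alpha$ that formula captures and in which class (one-structure or all-structures); the formula that is $\Pi_\alpha$ inside a fixed countable $\mc{A}$ is not the same object as the $\Sigma_\alpha$ formula defining the orbit, and getting a genuine $\Sigma_\alpha$ orbit-definition (as the paper does in the proof of Theorem~\ref{thm:compute-scott-sentence} by writing $\theta_{\bar a}(\bar x)=\exists\bar y\,\varphi^{n-1}_{\bar a\bar a'}(\bar x,\bar y)$) takes a small but necessary step that your outline elides. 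Second, the implication $(1)\Rightarrow(3)$ is, as you say, the genuinely delicate one: one must actually carry out the construction of a structure $\mc{B}$ with $\mc{A}\leq_{\alpha+1}\mc{B}$ but $\mc{B}\not\cong\mc{A}$ from the failure of $\Sigma_\alpha$-definability of some orbit, and the circularity (one wants to use the Scott sentence of $(\mc{A},\bar a)$ whose complexity is what is being determined) has to be broken explicitly; your sketch names the difficulty correctly but does not supply the construction. Neither issue is a wrong turn, just a gap between a correct outline and a complete proof.
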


Note that if $\mc{A}$ has a $\Pi_n$ Scott sentence, then for all countable $\mc{B}$,
\[ \mc{A} \leq_n \mc{B} \Longleftrightarrow \mc{B} \cong \mc{A}.\]
Writing out the definition of the left-hand-side, we get a $\Pi_{2n}$ definition. Thus we get the well-known fact that if a computable structure has a $\Pi_n$ Scott sentence, then it has a computable $\Pi_{2n}$ Scott sentence.

Alvir, Knight, and McCoy \cite{AlvirKnightMcCoy} showed that there is a computable structure with a $\Pi_2$ Scott sentence but no computable $\Pi_2$ Scott sentence, and this can be adjusted to show that for any $n$ there is a computable structure with a $\Pi_n$ Scott sentence but no computable $\Pi_n$ Scott sentence. Alvir, Csima, and Harrison-Trainor \cite{AlvirCsimaHT} showed that in the case $n = 2$, the above $\Pi_4$ bound is optimal.

\begin{theorem}[Alvir, Csima, Harrison-Trainor \cite{AlvirCsimaHT}]\label{thm:no-comp-scott-2}
	There is a computable structure with a $\Pi_2$ Scott sentence but no computable $\Sigma_4$ Scott sentence.
\end{theorem}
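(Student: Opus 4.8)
The plan is to produce a computable structure $\mc{A}$ with a $\Pi_2$ Scott sentence whose isomorphism index set $I(\mc{A})=\{e:\mc{M}_e\cong\mc{A}\}$ is $\Pi^0_4$-complete. This is enough, and in fact gives the sharpest possible statement: a computable $\Sigma_4$ Scott sentence $\varphi$ would make $I(\mc{A})=\{e:\mc{M}_e\models\varphi\}$ a $\Sigma^0_4$ set, since evaluating a fixed computable $\Sigma_4$ formula in a uniformly computable list of structures is uniformly $\Sigma^0_4$; so $\Pi^0_4$-completeness rules out any computable $\Sigma_4$ Scott sentence. In the other direction, since $\mc{A}$ has a $\Pi_2$ Scott sentence it automatically has a computable $\Pi_4$ one (as recalled above), so $I(\mc{A})$ is already $\Pi^0_4$, and the computable $\Pi_4$ bound cannot be improved for this $\mc{A}$.

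First I would isolate what is going on in the ``easy'' case: a computable $\Pi_2$-Scott structure whose automorphism orbits are uniformly existentially definable in a sufficiently effective way---for instance a friendly one, or one realizing only a c.e.\ family of existential types---does admit a computable Scott sentence of low complexity, essentially by arguments in the spirit of the effective pairs-of-structures theorem (Theorem~\ref{thm:pairs-light}) and its relatives. So the theorem has to be witnessed by a structure which has a $\Pi_2$ Scott sentence yet is maximally unfriendly, and producing such a structure is exactly what the unfriendly jump inversions of this paper are for: they yield computable structures whose back-and-forth relations, hence whose index sets, are as complex as the $\mathbf{0}^{(2n)}$ upper bound allows while the Scott complexity stays at the bottom.

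Concretely I would build $\mc{A}$ as a controlled sum $\mc{A}=\bigsqcup_n \mc{C}_n$ of countably many components, each carrying a unary marker at a distinguished point so that ``$\mc{M}_e$ decomposes into the correct components'' is arithmetically cheap to recognize. Each component $\mc{C}_n$ is drawn uniformly from a family of gadget structures supplied by the unfriendly jump inversion: each gadget has a $\Pi_2$ Scott sentence with $\Sigma_1$-definable orbits, distinct gadgets are separated by existential formulas, but deciding, from a computable copy, which gadget a component is isomorphic to is maximally hard. I would then choose the rule $n\mapsto$ (gadget of $\mc{C}_n$) so as to reduce a fixed $\Pi^0_4$-complete set $P$ to $I(\mc{A})$: membership in $P$ is a $\forall$ over $n$ of a $\Sigma^0_3$ condition on $e$, and the unfriendliness of the gadgets is calibrated so that ``component $n$ of $\mc{M}_e$ is the intended gadget'' matches that $\Sigma^0_3$ condition; since $\mc{M}_e\cong\mc{A}$ iff every component is the intended one, $I(\mc{A})$ becomes the $\forall_n\Sigma^0_3=\Pi^0_4$ set $P$ up to many-one reduction. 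Separately I would verify directly that $\mc{A}$ has a $\Pi_2$ Scott sentence: since every orbit of every gadget is $\Sigma_1$-definable and distinct gadgets are existentially separated, the natural description ``every element lies in a component realizing one of the permitted gadget types with the prescribed multiplicities, and each orbit is as specified'' is of the form $\forall x\bigvee(\cdots)$ conjoined with $\Pi_2$ clauses, hence $\Pi_2$---provided the coding is arranged so that no ``only finitely many components of a given type'' clause, which would be $\Sigma_2$ and force the Scott sentence up to $\Pi_3$, is ever needed.

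The main obstacle is the tension between these two demands. A $\Pi_2$ Scott sentence makes the structure locally trivial---all orbits existential---so the only room for $\Pi^0_4$ worth of complexity is in the global pattern of which local types occur, together with the difficulty of identifying those local types from a computable copy; ordinary friendly gadgets (finite rigid pieces, say) make both of these arithmetically cheap and can never reach the $\Pi^0_4$ level, so genuinely unfriendly gadgets are unavoidable. At the same time the gadgets and the way they are glued must be tame enough that the global Scott complexity provably stays at $\Pi_2$ rather than leaking up to $\Pi_3$. Threading this needle---unfriendly enough to be $\Pi^0_4$-complete, tame enough to keep a $\Pi_2$ Scott sentence---is the crux of the argument, and it is precisely the situation that the unfriendly jump inversion technique is designed to handle.
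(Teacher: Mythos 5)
The theorem you are asked to prove is a \emph{cited} result from \cite{AlvirCsimaHT}; the paper does not give a proof of it. What the paper does prove (or rather record, in Theorem~\ref{thm:no-comp-scott-sentence-more}) is a strengthened form: for $n\ge 2$ there is a computable $\mc{A}$ with a $\Pi_n$ Scott sentence together with a uniformly computable sequence $(\mc{C}_i)$ witnessing that the index set $\{i:\mc{C}_i\cong\mc{A}\}$ is $\Pi^0_{2n}$-complete, which for $n=2$ is exactly your $\Pi^0_4$-completeness claim. The paper explicitly \emph{omits} the $n=2$ base case of that theorem, saying it is ``proved very similarly to the corresponding fact from \cite{AlvirCsimaHT}'' and is ``quite technical''; the general case $n\ge 3$ is then obtained by jump inversion from the base case. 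So your overall target --- make the isomorphism index set $\Pi^0_4$-complete and deduce there is no computable $\Sigma_4$ Scott sentence --- is exactly the stronger statement the paper records, and the first paragraph of your reduction is correct and routine.

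The gap is in how you propose to realize the construction. You suggest assembling $\mc{A}$ from gadgets ``supplied by the unfriendly jump inversion'' that you claim have $\Pi_2$ Scott sentences with $\Sigma_1$-definable orbits and maximal unfriendliness. But the gadgets the paper's unfriendly jump inversion actually produces (Lemma~\ref{lem:n1}, the $n=1$ base case of Theorem~\ref{thm:jump-inversion-structures}) have $\Pi_3$ Scott sentences with $\Sigma_2$-definable orbits, not $\Pi_2$; and more fundamentally the jump inversion operation $\Inv{n}{\cdot}$ cannot be used here at all, because for any $n\ge 1$ it pushes Scott complexity up by $n$ levels (Theorem~\ref{thm:jump-inv}(4)), so its output can never have a $\Pi_2$ Scott sentence, and $\Inv{0}{\cdot}$ is vacuous. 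The $n=2$ case is the \emph{base} of the induction from which the unfriendly jump inversions bootstrap: it has to be built directly, not obtained as an inversion of something simpler. You correctly identify the crux --- keeping the Scott sentence at $\Pi_2$ while pushing the index-set complexity to $\Pi^0_4$, i.e.\ making every orbit $\Sigma_1$-definable while ensuring that recognizing orbits from a computable copy is maximally hard --- but leave it unresolved, and the gadget suggestion points you back at machinery that cannot deliver $\Pi_2$. What is actually needed is a direct hand-built construction in the spirit of the bouquet-graph argument of Lemma~\ref{lem:n1}, recalibrated one level down so that orbits are $\Sigma_1$-definable rather than $\Sigma_2$-definable; this is the technical content of \cite{AlvirCsimaHT}, and is precisely the part the present paper declines to reproduce.
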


\noindent They were only able to show that for each $n \geq 2$ there is a computable structure with a $\Pi_n$ Scott sentence but no computable $\Sigma_{n+2}$ Scott sentence. Using our unfriendly jump inversions, we resolve this question.

\begin{theorem}
	For each $n$ there is a computable structure with a $\Pi_n$ Scott sentence but no computable $\Sigma_{2n}$ Scott sentence.
\end{theorem}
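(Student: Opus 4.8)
The plan is to establish the following stronger statement: there is a computable structure $\mc{A}$ with a $\Pi_n$ Scott sentence together with a uniformly computable sequence $(\mc{C}_i)_{i \in \omega}$ of structures for which $\{i : \mc{C}_i \cong \mc{A}\}$ is $\Pi^0_{2n}$-complete. This suffices, because the satisfaction of a fixed computable $\Sigma_{2n}$ sentence in a uniformly computable family of structures is a $\Sigma^0_{2n}$ relation: a computable $\Sigma_{2n}$ Scott sentence $\psi$ for $\mc{A}$ would make $\{i : \mc{C}_i \cong \mc{A}\} = \{i : \mc{C}_i \models \psi\}$ a $\Sigma^0_{2n}$ set, contradicting $\Pi^0_{2n}$-completeness. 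Note that, since $\mc{A}$ has a $\Pi_n$ Scott sentence, $\mc{C}_i \cong \mc{A} \Longleftrightarrow \mc{A} \leq_n \mc{C}_i$ for every $i$, and writing out the definition of $\leq_n$ shows that $\{i : \mc{C}_i \cong \mc{A}\}$ is automatically $\Pi^0_{2n}$ as soon as the $\mc{C}_i$ are uniformly computable. So the whole content of the construction is forcing $\Pi^0_{2n}$-\emph{hardness}, which is the worst possible case --- by Chen, Gonzalez, and Harrison-Trainor \cite{ChenGonzalezHT} the relation $\leq_n$ is $\bfPi^0_{2n}$-complete in general.

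To meet both demands on $\mc{A}$ simultaneously I would build it by an iterated unfriendly jump inversion, of the same kind used to prove the maximal-unfriendliness theorem stated above. One starts from a base structure of low Scott rank (say, Scott rank $1$) and repeatedly applies the unfriendly Marker extension / jump inversion: each application raises the complexity of the least Scott sentence by exactly one level, carrying a structure with a $\Pi_k$ Scott sentence to one with a $\Pi_{k+1}$ Scott sentence, and one iterates until reaching $\Pi_n$. The point of the \emph{unfriendly} version is that it also arranges, at each level, that the back-and-forth relations of the new structure remain as complicated as the level permits --- $\Pi^0_{2k}$-complete at level $k$ --- by coding a $\Pi^0_{2k}$-complete set into the structure during the inversion. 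The accompanying family $(\mc{C}_i)$ is produced at the top level: one feeds a $\Pi^0_{2n}$-complete set $P$ into the unfriendly analogue of the lightface pair-of-structures construction (Theorem~\ref{thm:pairs-light} with the $n$-friendliness hypothesis dropped), obtaining a computable map $i \mapsto \mc{C}_i$ with $\mc{C}_i \cong \mc{A}$ iff $i \in P$; this is consistent with $\mc{A}$ having a $\Pi_n$ Scott sentence and makes $\{i : \mc{C}_i \cong \mc{A}\}$ equal to $P$. Alternatively, if the structure produced for the maximal-unfriendliness theorem is checked to have a $\Pi_n$ Scott sentence, its $\Pi^0_{2n}$-complete $n$-back-and-forth relation on tuples can be transferred to the isomorphism problem of a uniformly computable family by naming tuples with new constants.

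The main obstacle is the tension between the two properties required of $\mc{A}$: a Scott sentence as simple as $\Pi_n$ normally forces the $n$-back-and-forth relations --- and hence the isomorphism problem they govern --- to be comparatively simple, of complexity around $\Pi^0_{n+1}$, whereas here they must be $\Pi^0_{2n}$-complete. Manufacturing maximal unfriendliness in the presence of a simple Scott sentence is exactly what the new technique is designed for, and exactly what earlier methods could not do; this is why Alvir, Csima, and Harrison-Trainor (Theorem~\ref{thm:no-comp-scott-2} and its generalization) only reached $\Sigma_{n+2}$ rather than $\Sigma_{2n}$ for $n \geq 3$. Concretely, the delicate points will be: (i) verifying that each unfriendly jump inversion preserves the exact Scott complexity, so the iteration lands precisely at $\Pi_n$; (ii) verifying that it preserves --- and at the bottom creates --- the maximal complexity of the back-and-forth relations, in particular that the coding of the driving complete sets does not inadvertently produce an easier-to-win back-and-forth game at some intermediate level, which would collapse the final complexity below $\Pi^0_{2n}$; and (iii) tracking the $2n$ quantifier alternations through the iterated Marker extensions carefully enough to certify that the lower bound $\Pi^0_{2n}$, and not merely $\Pi^0_{2n-1}$, is attained.
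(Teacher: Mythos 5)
Your first paragraph correctly identifies the paper's strategy: prove the strengthened statement that there is a computable $\mc{A}$ with a $\Pi_n$ Scott sentence and a uniformly computable family $(\mc{C}_i)$ with $\{i : \mc{C}_i \cong \mc{A}\}$ $\Pi^0_{2n}$-complete (Theorem~\ref{thm:no-comp-scott-sentence-more}), from which the statement follows by the argument you give. But the second paragraph, where you sketch how to get such an $\mc{A}$, has genuine gaps.

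First, you treat the base of the induction too lightly. Starting from ``a base structure of low Scott rank'' is not enough; you need a computable structure with a $\Pi_2$ Scott sentence whose isomorphism problem within a uniformly computable family is already $\Pi^0_4$-complete. That is precisely the $n=2$ case of the strengthened theorem, and it is real work (a construction in the style of \cite{AlvirCsimaHT}) rather than something given for free; and to feed the jump inversion you also need the relativized version at $\mathbf{0}^{(2n-4)}$. Your proposal never names this ingredient, yet the whole $\Pi^0_{2n}$-hardness flows from it.

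Second, the family $(\mc{C}_i)$ cannot be ``produced at the top level'' by a pairs-of-structures construction. Theorem~\ref{thm:pairs-light} with friendliness dropped yields only a $\mathbf{0}^{(2n)}$-computable map, not a computable one; and the unfriendly analogue of it, Theorem~\ref{thm:jump-inversion-structures}, gives structures with $\Pi_{n+2}$ (not $\Pi_n$) Scott sentences, so applying it to $\mc{A}$ itself cannot produce what you want. The paper instead produces the $\mc{C}_i$ at the \emph{bottom}: relativize the $n=2$ case to $\mathbf{0}^{(2n-4)}$ to get a $\mathbf{0}^{(2n-4)}$-computable $\mc{A}$ and $\mathbf{0}^{(2n-4)}$-computable $\mc{C}_i$ with the $\Pi^0_{2n}$-complete iso problem, then push the entire family through the $(n-2)$-fold unfriendly jump inversion. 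Property (1) of Theorem~\ref{thm:jump-inv} is exactly what makes the $\Inv{(n-2)}{\mc{C}_i}$ uniformly computable even though the $\mc{C}_i$ themselves are only $\mathbf{0}^{(2n-4)}$-computable, and the functoriality of the inversion (same $\mc{G}$ iff same $\Inv{n}{\mc{G}}$) preserves the index set. This ``build the diagonalizing family below, invert upward'' step is the core mechanism and is missing from your sketch.

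Third, your alternative --- reusing the structure from Theorem~2.6 (maximal unfriendliness) --- does not work as stated: that structure is built over $\mc{A},\mc{B}$ from Theorem~\ref{thm:jump-inversion-structures}, which already have $\Pi_{n+2}$ Scott sentences, so the resulting $\mc{M}$ does not have a $\Pi_n$ Scott sentence. The tension you describe in your last paragraph is real, but it is resolved only by the specific architecture above, not by the generic iteration you describe.
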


\noindent Indeed our new techniques of unfriendly jump inversions allow us to obtain the general result almost directly from the case $n=2$ (at least, if we prove a slightly more general statement in the case $n=2$).

\medskip

Along with being an application of the method of unfriendly jump inversions, these results give some motivation to the method (and indeed the thought process below is how the author developed the method). Consider a computable structure $\mc{A}$ with a $\Pi_2$ Scott sentence but no computable $\Sigma_4$ Scott sentence. It has a computable $\Pi_4$ Scott sentence and (as we prove in Theorem \ref{thm:compute-scott-sentence}) a $\mathbf{0}''$-computable $\Pi_2$ Scott sentence. We can consider the complexity of certain index sets of structures isomorphic to $\mc{A}$. Let $(\mc{B}_i)_{i \in \mathbb{N}}$ be a computable list of the (possibly partial) computable $\mc{L}$-structures. Consider the index set
\[ \text{$\Delta^0_1$-Copies}(\mc{A}) = \{ i \in \mathbb{N}: \mc{B}_i \cong \mc{A}\}\]
of computable structures isomorphic to $\mc{A}$. Because $\mc{A}$ has a computable $\Pi_4$ Scott sentence, this set is $\Pi^0_4$. On the other hand, because $\mc{A}$ has no computable $\Sigma_4$ Scott sentence, one might imagine that this set is not $\Sigma^0_4$. Though this does not follow directly, by a slight modification of the same argument can build $\mc{A}$ so that this index set is $\Pi^0_4$-complete. Thus: deciding whether a computable structure is isomorphic to $\mc{A}$ is a $\Pi^0_4$-complete problem.

On the other hand, consider a list $(\mc{C}_i^{\mathbf{0}''})_{i \in \mathbb{N}}$ of the (possibly partial) $\mathbf{0}''$-computable structures, and the index set
\[ \Delta^0_3\text{-Copies}(\mc{A}) = \{ i \in \mathbb{N}: \mc{C}^{\mathbf{0}''}_i \cong \mc{A}\}\]
of $\mathbf{0}''$-computable copies of $\mc{A}$. Because $\mc{A}$ has a $\mathbf{0}''$-computable $\Pi_2$ Scott sentence, this is also a $\Pi^0_4$ set! That is, the difficulty of deciding whether a computable structure is isomorphic to $\mc{A}$ is the same as the difficult of deciding whether a $\mathbf{0}''$-computable structure is isomorphic to $\mc{A}$. This is highly unusual. Usually, if the set of computable structures isomorphic to $\mc{A}$ is $\Pi^0_4$ then we would expect that the set of $0''$-computable structures isomorphic to $\mc{A}$ would be $\Pi^0_4$ relative to $0''$, hence $\Pi^0_6$. This sort of phenomenon underlies the unfriendly jump inversions.

\subsection{Definability of back-and-forth types}

Given $\bar{a} \in \mc{A}$, the appropriate notion of types in countable model theory is often the $\Pi_n$-type of $\bar{a}$, which is the collection of $\Pi_n$ formulas true of $\bar{a}$. We have that $\bar{b} \in \mc{B}$ satisfies the $\Pi_n$-type of $\bar{a}$ if and only if $(\mc{A},\bar{a}) \leq_n (\mc{B},\bar{b})$.

Within the countable structure $\mc{A}$ the $\Pi_n$-type of $\bar{a}$ is definable by a single $\Pi_n$ formula, that is, there is a $\Pi_n$ formula $\varphi_{\bar{a}}(\bar{x})$ such that, for $\bar{b} \in \mc{A}$, $\bar{a} \leq_n \bar{b}$ if and only if $\mc{A} \models \varphi_{\bar{a}}(\bar{b})$. This is essentially because there are only countably many other tuples to consider; see, e.g., Lemma II.62 of \cite{MonBook2}.

On the other hand, there is no reason for a $\Pi_n$-type to be definable by a $\Pi_n$ formula in general among all tuples in all structures. In \cite{ChenGonzalezHT}, Chen, Gonzalez, and Harrison-Trainor showed that a $\Pi_n$-type is definable by a $\Pi_{n+2}$ formula, and that this is best possible. The simplest case (which is equivalent to the general case by naming constants) is to consider $0$-tuples, i.e., just structures. For each fixed structure $\mc{A}$ the set $\{ \mc{B} \in \Mod(\mc{L}) : \mc{A} \leq_n \mc{B}\}$ is (boldface) $\bfPi^0_{n+2}$, and there is a choice of $\mc{A}$ for which it is $\bfPi^0_{n+2}$-complete. This upper bound was not effective.

If $\mc{A}$ is computable, what is the lightface complexity of this set? Equivalently, what is the least complexity of a computable sentence $\varphi$ such that $\mc{A} \leq_n \mc{B}$ if and only if $\mc{B} \models \varphi$? The upper bound is $\Pi_{2n}$ as one can write down the definition of the back-and-forth relations replacing each quantifier over $\mc{A}$ with a countably infinite conjunction or disjunction over the elements of $\mc{A}$. In this paper, we show that this is best possible.

\begin{restatable}{theorem}{backandforthtypes}
	There is a computable structure $\mc{A}$ such that the set
	\[ \{ \mc{B} \in Mod(\mc{L}) : \mc{A} \leq_n \mc{B}\}\]
	is not (lightface) $\Sigma^0_{2n}$.
\end{restatable}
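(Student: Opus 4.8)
The plan is to establish the stronger statement that there is a computable structure $\mc{A}$ together with a uniformly computable sequence $(\mc{B}_x)_{x\in\N}$ of $\mc{L}$-structures such that $\{x\in\N : \mc{A}\leq_n\mc{B}_x\}$ is $\Pi^0_{2n}$-complete. This suffices: since $\mc{A}$ is computable, writing out the inductive definition of $\leq_n$, replacing each quantifier over $\mc{A}$ by a countable conjunction or disjunction over the elements of $\mc{A}$, shows that $\{\mc{B}\in\Mod(\mc{L}) : \mc{A}\leq_n\mc{B}\}$ is lightface $\Pi^0_{2n}$; and if this set were also $\Sigma^0_{2n}$ then its preimage $\{x : \mc{A}\leq_n\mc{B}_x\}$ under the computable map $x\mapsto\mc{B}_x$ would be $\Sigma^0_{2n}$, contradicting $\Pi^0_{2n}$-completeness.

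First I would fix a $\Pi^0_{2n}$-complete set $P$ presented in a shape adapted to iterated relativization --- say via a $(2n-1)$-fold nested ``infinitude'' predicate --- so that deciding $x\in P$ amounts to checking, level by level, alternating $\forall$/$\exists$ conditions about computable approximations. The structure $\mc{A}$ is then built by an iterated \emph{unfriendly Marker extension}, following the construction behind the maximally-unfriendly-structures theorem above: one starts from a base case (established by a direct construction, in analogy with the $\Pi_2$-Scott-sentence situation) whose back-and-forth type encodes a complete set at the bottom level, and applies the unfriendly Marker construction enough times to reach rank $n$, each application raising the back-and-forth rank by one and --- crucially, because the Marker gadgets are designed to be maximally unfriendly rather than friendly --- raising the complexity of the relevant back-and-forth correspondence by two quantifier alternations rather than one. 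The test structures $\mc{B}_x$ are produced by the same recipe but with the base-level data driven by $x$: for each coordinate of the outermost quantifier block of $P$, the base stage of $\mc{B}_x$ carries a ``run'' whose realized low-level back-and-forth type records whether the corresponding inner block of $P$ succeeds at $x$; since this is all governed by computable approximations, the family $(\mc{B}_x)$ is uniformly computable.

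Then comes the heart of the argument: the back-and-forth analysis showing $\mc{A}\leq_n\mc{B}_x\Longleftrightarrow x\in P$. For the forward direction one exhibits a winning strategy in the $n$-round back-and-forth game from $\mc{A}$ to $\mc{B}_x$ under the hypothesis $x\in P$; the subtle point is that when the opponent descends into a Marker gadget of $\mc{B}_x$, one must respond inside the correspondingly complicated gadget of $\mc{A}$, and the bookkeeping making this possible is precisely the content of the unfriendly Marker extension lemmas, invoked layer by layer. For the converse one shows that a win in the $n$-round game forces, propagating upward through the Marker layers, the defining $\Pi^0_{2n}$ condition on $x$; here one must verify that unfriendliness genuinely contributes two quantifiers at every level and that no collapse occurs --- a friendly Marker construction would let the gadgets be matched by a computable rule, telescoping the complexity down to the non-optimal $\bfPi^0_{n+2}$ of Chen--Gonzalez--Harrison-Trainor \cite{ChenGonzalezHT}.

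I expect the main obstacle to be exactly this two-sided back-and-forth computation through the iterated unfriendly Marker extension, and in particular establishing the key invariant that the $k$-back-and-forth relations between Marker gadgets reflect the base data at complexity $\Pi^0_{2k}$ and no lower. Designing the gadgets --- and proving the accompanying unfriendly Marker extension lemma --- so that matching them is itself as hard as possible is what pushes the final complexity from $\bfPi^0_{n+2}$ up to the optimal $\Pi^0_{2n}$, and is the whole reason the new technique of unfriendly jump inversions is needed rather than the classical friendly ones.
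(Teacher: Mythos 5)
Your reduction step is correct: it does suffice to produce a computable $\mc{A}$ and a uniformly computable sequence $(\mc{B}_x)$ such that $\{x : \mc{A} \leq_n \mc{B}_x\}$ is $\Pi^0_{2n}$-complete, and iterated unfriendly jump inversions are the right technology for building such examples. But you are missing the one idea that makes the paper's proof a one-line corollary rather than a delicate back-and-forth analysis. The paper takes $\mc{A}$ directly from Theorem~\ref{thm:no-comp-scott-sentence-more}, which supplies a computable $\mc{A}$ with a $\Pi_n$ \emph{Scott sentence} together with a uniformly computable sequence $(\mc{C}_i)$ such that $\{i : \mc{C}_i \cong \mc{A}\}$ is $\Pi^0_{2n}$-complete. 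Once $\mc{A}$ has a $\Pi_n$ Scott sentence, $\mc{A} \leq_n \mc{B}$ holds if and only if $\mc{B} \cong \mc{A}$, so $\{\mc{B} : \mc{A} \leq_n \mc{B}\}$ \emph{is} the isomorphism class, and the hardness transfers immediately. There is no $n$-round back-and-forth game to play.

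The part of your plan that you flag as ``the heart of the argument'' --- propagating a two-sided back-and-forth invariant through iterated unfriendly Marker layers and verifying that the $\leq_n$ relation tracks $x\in P$ exactly --- is a genuine missing piece, not just a hard computation. The paper's jump inversion lemma (Theorem~\ref{thm:jump-inv}) proves statements about Scott sentences, about translating $\Sigma_\ell$ sentences, and about computing copies at various oracles; it does \emph{not} establish a transfer rule of the form ``$\mc{G} \leq_k \mc{H}$ iff $\Inv{n}{\mc{G}} \leq_{n+k} \Inv{n}{\mc{H}}$,'' and the building blocks $\mc{A}, \mc{B}$ of Theorem~\ref{thm:jump-inversion-structures} are only shown to satisfy $\mc{B} \nleq_{n+1} \mc{A}$, not the stronger back-and-forth control your plan would require. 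You would have to formulate and prove such a transfer lemma from scratch. The Scott sentence observation is precisely what renders all of that unnecessary, which is why you should be constructing $\mc{A}$ to have $\Pi_n$ Scott rank (so that $\leq_n$ collapses to isomorphism) rather than trying to control $\leq_n$ on arbitrary copies directly.
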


\subsection{Separating structures}

Suppose that $\mc{A}$ and $\mc{B}$ are computable structures and that $\mc{A} \nleq_n \mc{B}$. This means that there is a witnessing $\Pi_n$ sentence $\varphi$ such that $\mc{A} \models \varphi$ but $\mc{B} \nmodels \varphi$. What kind of non-computable separation can we find? Ash and Knight \cite{AshKnight} show that if $\mc{A}$ and $\mc{B}$ are $n$-friendly and have computable existential diagrams, then there is a computable  $\Pi_n$ sentence $\varphi$ such that $\mc{A} \models \varphi$ but $\mc{B} \nmodels \varphi$. In general, the best we can do is a computable $\Pi_{2n-1}$ sentence $\varphi$ (obtained by choosing $\bar{b} \in \mc{B}$ such that there is no $\bar{a} \in \mc{A}$ with $(\mc{A},\bar{a}) \geq_{n-1} (\mc{B},\bar{b})$, and then by writing out the definition $\geq_{n-1}$) such that $\mc{B} \models \varphi$ if and only if $\mc{A} \leq_n \mc{B}$; then $\mc{A} \models \varphi$ but $\mc{B} \nmodels \varphi$. We show that this is the best we can do:

\begin{restatable}{theorem}{separating}
	Fix $n \geq 2$. There are computable structures $\mc{A}$ and $\mc{B}$ such that $\mc{B} \nleq_n \mc{A}$ but for any computable $\Sigma_{2n-1}$ sentence $\varphi$, if $\mc{B} \models \varphi$ then $\mc{A} \models \varphi$.
\end{restatable}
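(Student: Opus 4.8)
The plan is to build computable $\mc{A}$ and $\mc{B}$ with a distinguished tuple $\bar b_0 \in \mc{B}$ whose $\Pi_{n-1}$-type $p$ is not realized in $\mc{A}$ --- which already gives $\mc{B} \nleq_n \mc{A}$ --- while forcing $\mc{A}$ to realize every ``effectively describable'' fragment of $p$, so that $\mc{A}$ satisfies every computable $\Sigma_{2n-1}$ sentence that $\bar b_0$ makes true in $\mc{B}$. The point is that $p$ must be simultaneously unrealized in $\mc{A}$ and effectively approximable inside $\mc{A}$, and it is exactly this ``effectively below the target type but never at it'' behaviour that friendly Marker extensions cannot produce and that the unfriendly jump inversion is designed for: it builds a \emph{computable} structure whose $(n-1)$-back-and-forth relations carry a maximally complex ($\Pi^0_{2n-2}$-complete) pattern that the structure itself does not ``see''. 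I would organise this as an iteration: prove the base instance $n = 2$, then apply an unfriendly Marker extension $n - 2$ times, noting that each extension raises the back-and-forth level by one and the relevant definability bound by two, so it carries an instance of the theorem at level $m$ to one at level $m+1$ (a computable $\Sigma_{2(m+1)-1}$ separation of the extended pair pulls back to a computable $\Sigma_{2m-1}$ separation of the original pair). For the pull-back of an \emph{arbitrary} computable $\Sigma_{2m+1}$ sentence to go through --- not just the canonical one witnessing $\hat{\mc B} \nleq_{m+1} \hat{\mc A}$ --- one proves the $n = 2$ base case in the slightly more general, uniform, tuple-quantified form that the excerpt flags as needed for the Scott-sentence application.

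For the base case $n = 2$ I would fix an enumeration $(\varphi_e)_{e \in \N}$ of the computable $\Sigma_3$ sentences and a $\Pi^0_3$-complete set $C$, and build computable $\mc{A}$ and $\mc{B}$ together with $b_0 \in \mc{B}$ so that: $\mc{B}$ is homogeneous enough on the relevant sort that $\mc{B} \models \varphi_e$ iff $\varphi_e$ is ``forced by the $\Pi_1$-type of $b_0$''; no element of $\mc{A}$ realizes the full $\Pi_1$-type $p$ of $b_0$; and $\mc{A}$ contains, computably uniformly in $e$, an element $a_e$ realizing precisely the fragment of $p$ that $\varphi_e$ can test. What makes ``$e \mapsto a_e$ computable'' compatible with ``no element of $\mc{A}$ realizes all of $p$'' is the unfriendly jump inversion: the $1$-back-and-forth relations among the $a_e$ and $b_0$ are tuned to implement a $\Pi^0_2$-complete pattern --- and, with the existential witness quantifier of a $\Sigma_3$ sentence carried along, a $\Sigma^0_3$-level obstruction --- so that the family of questions ``does some $a \in \mc{A}$ realize the part of $p$ relevant to $\varphi_e$?'' reduces computably from $C$. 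Moreover $\mc{A}$ and $\mc{B}$ are built to differ only in whether $p$ is realized.

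To finish the base case, suppose for contradiction that $\varphi = \varphi_e$ is a computable $\Sigma_3$ sentence with $\mc{B} \models \varphi$ and $\mc{A} \not\models \varphi$. Since $\mc{A}$ and $\mc{B}$ differ only in the realization of $p$, a witnessing tuple for $\varphi$ in $\mc{B}$ with no counterpart in $\mc{A}$ must involve $b_0$; writing $\varphi$ as a disjunction of formulas $\exists \bar x\, \psi(\bar x)$ with $\psi \in \Pi_2$ and using that $\Pi_2$ formulas transfer downward along $\leq_2$, the failure $\mc{A} \not\models \varphi$ forces $\mc{A}$ to contain no tuple that is $\geq_2$ the relevant tuple around $b_0$; tracing this through the coding turns $\varphi$ into a $\Sigma^0_3$ decision procedure for $C$, which is impossible. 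Hence every computable $\Sigma_3$ sentence true of $\mc{B}$ is true of $\mc{A}$, establishing $n = 2$; the general case follows by the iteration described above.

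The main obstacle is making these two requirements coexist: a \emph{computable} $\mc{A}$ that realizes every computable-$\Sigma_{2n-1}$ consequence of $\bar b_0$'s type without realizing the type itself. This effective approximability of a genuinely unrealized type is the jump-inversion phenomenon in its purest form, and it cannot be obtained from friendly Marker extensions; making the unfriendly jump inversion deliver it while keeping all the back-and-forth bookkeeping correct is the technical heart of the argument. A secondary obstacle is checking that a single unfriendly Marker extension transports the whole statement --- including the quantifier ``for every computable $\Sigma_{2n-1}$ sentence'', not merely the canonical separating sentence --- which is why the $n = 2$ case is proved in a uniform, parametrized form.
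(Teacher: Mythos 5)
Your proposal takes a genuinely different route from the paper's, and as written it has two gaps that prevent it from closing. The paper's proof of this theorem is a short corollary of Theorem~\ref{thm:jump-inversion-structures}: taking $S$ a complete $\Sigma^0_{2n-1}$ set, that theorem (at level $n-1$) supplies not only computable $\mc{A}$, $\mc{B}$ with $\mc{B} \nleq_n \mc{A}$ but also a \emph{uniformly computable} sequence $\mc{C}_i$ with $i \in S \Leftrightarrow \mc{C}_i \cong \mc{A}$ and $i \notin S \Leftrightarrow \mc{C}_i \cong \mc{B}$. A computable $\Pi_{2n-1}$ sentence $\varphi$ with $\mc{A} \models \varphi$ and $\mc{B} \nmodels \varphi$ would then render $S = \{ i : \mc{C}_i \models \varphi\}$ a $\Pi^0_{2n-1}$ set, a contradiction. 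That auxiliary sequence $\mc{C}_i$ is the entire engine of the argument; your proposal never constructs it or an equivalent hardness certificate, so the step in your base case where ``tracing this through the coding turns $\varphi$ into a $\Sigma^0_3$ decision procedure for $C$'' is asserted rather than shown, and the $n=2$ case remains a sketch of the desired phenomenon rather than a construction.

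Two further concrete problems. First, a direction error: a tuple $\bar b_0 \in \mc{B}$ whose $\Pi_{n-1}$-type is unrealized in $\mc{A}$ witnesses $\mc{A} \nleq_n \mc{B}$, not $\mc{B} \nleq_n \mc{A}$ (unwinding the definition, $\mc{B} \nleq_n \mc{A}$ requires a tuple \emph{in $\mc{A}$} whose $\Pi_{n-1}$-type is not realized in $\mc{B}$), so even if the construction succeeded it would prove a variant of the statement rather than the statement itself. Second, and more seriously, your iteration step asserts that a computable $\Sigma_{2(m+1)-1}$ separation of $\Inv{1}{\mc{A}}, \Inv{1}{\mc{B}}$ pulls back to a computable $\Sigma_{2m-1}$ separation of $\mc{A}, \mc{B}$. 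Theorem~\ref{thm:jump-inv} gives only the forward transfer (a $\Sigma_\ell$ sentence about $\mc{G}$ lifts to a $\Sigma_{\ell+1}$ sentence about $\Inv{1}{\mc{G}}$); there is no backward transfer for unfriendly jump inversions, and that failure is precisely what ``unfriendly'' means. A $\Sigma_{2m+1}$ sentence about $\Inv{1}{\mc{A}}$ may probe the copies of the parameter structures attached inside the inversion and need not factor through any sentence about $\mc{A}$. The paper avoids this by doing its induction inside Theorem~\ref{thm:jump-inversion-structures}, iterating the full structure-plus-sequence package (which the jump inversion does transport cleanly), and only afterward deriving the separating theorem as a quick corollary. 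If you want to iterate the separating statement directly, you would need to carry a uniformly computable $\mc{C}_i$-style family along through the inversion, which is exactly what the stronger Theorem~\ref{thm:jump-inversion-structures} packages.
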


\section{Computing optimal Scott sentences and separating sentences}

Suppose that $\mc{A}$ and $\mc{B}$ are computable structures, and $\mc{A} \nleq_n \mc{B}$ which means that there is a $\Pi_n$ sentence true of $\mc{A}$ but not of $\mc{B}$. Ash and Knight \cite{AshKnight} prove the theorem below which, as a corollary, gives conditions---that $\{\mc{A},\mc{B}\}$ be $n$-friendly with computable existential diagrams---under which we can compute such a sentence. The theorem is set in a more general context. A tuple $\bar{a} \in \mc{A}$, the $\Pi_n$-type of $\bar{a}$ is known to be definable within $\mc{A}$ by a $\Pi_n$ sentence, and moreover, this remains true as long as we consider a only countable set of structures. (As shown in \cite{ChenGonzalezHT}, $\Pi_n$-types are not definable by $\Pi_n$ formulas in general.) In the theorem, we get conditions under which the $\Pi_n$-types becomes definable by computable $\Pi_n$ formulas.

\begin{theorem}[Theorem 15.2 of \cite{AshKnight}]\label{thm:ash-knight-sep}
	Let $(\mc{A}_i)_{i \in \mathbb{N}}$ be a computable list of $n$-friendly structures with uniformly computable existential diagrams. Then for any $m$ such that $1 \leq m \leq n$, and any tuple $\bar{a} \in \mc{A}_i$, we can find a computable $\Pi_m$ formula $\varphi^{m}_{i,\bar{a}}$ such that
	\[(\mc{A}_i,\bar{a}) \leq_m (\mc{A}_j,\bar{b}) \Longleftrightarrow \mc{A}_j \models \varphi^m_{i,\bar{a}}(\bar{b}).\qedhere\]
\end{theorem}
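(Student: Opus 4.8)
The plan is to construct the formulas $\varphi^m_{i,\bar{a}}$ by induction on $m$, uniformly in $i$ and in $\bar{a} \in \mc{A}_i$, adapting the (noneffective) fact that within a countable family the $\Pi_m$-type of a tuple is already definable by a single $\Pi_m$ formula, and making that argument effective by means of $n$-friendliness.

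For the base case $m = 1$, the hypothesis that the family has uniformly computable existential diagrams lets us decide, given $i$, a tuple $\bar{a} \in \mc{A}_i$, and a finitary quantifier-free $\rho(\bar{x}, \bar{y})$, whether $\mc{A}_i \models \exists \bar{x}\, \rho(\bar{x}, \bar{a})$. I would then set
\[
\varphi^1_{i,\bar{a}}(\bar{y}) \;:=\; \bigwedge \bigl\{\, \forall \bar{x}\,\neg\rho(\bar{x},\bar{y}) \;:\; \rho \text{ finitary quantifier-free}, \ \mc{A}_i \not\models \exists \bar{x}\,\rho(\bar{x},\bar{a}) \,\bigr\},
\]
a computable $\Pi_1$ formula. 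Then $\mc{A}_j \models \varphi^1_{i,\bar{a}}(\bar{b})$ holds exactly when $\bar{b}$ satisfies no existential formula that $\bar{a}$ avoids, i.e.\ when the $\Sigma_1$-type of $\bar{b}$ in $\mc{A}_j$ is contained in that of $\bar{a}$ in $\mc{A}_i$, which is precisely $(\mc{A}_i,\bar{a}) \leq_1 (\mc{A}_j,\bar{b})$.

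For the inductive step we have the uniformly computable $\Pi_m$ formulas $\varphi^m_{k,\bar{c}}$ and want $\varphi^{m+1}_{i,\bar{a}}$, using that $(\mc{A}_i,\bar{a}) \leq_{m+1} (\mc{A}_j,\bar{b})$ iff for every $\bar{b}' \in \mc{A}_j$ there is $\bar{a}' \in \mc{A}_i$ with $(\mc{A}_j,\bar{b}\bar{b}') \leq_m (\mc{A}_i,\bar{a}\bar{a}')$. A useful reformulation is that, for tuples $\bar{d},\bar{e}$ ranging over the family, $(\mc{A}_j,\bar{d}) \leq_m (\mc{A}_i,\bar{e})$ iff for every pair $(\mc{A}_k,\bar{c})$ from the family, $(\mc{A}_k,\bar{c}) \leq_m (\mc{A}_j,\bar{d})$ implies $(\mc{A}_k,\bar{c}) \leq_m (\mc{A}_i,\bar{e})$ (the nontrivial implication being witnessed by $\bar{c} := \bar{d}$). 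Since $n$-friendliness makes the relations $\leq_m$ between tuples of the family computable, and $(\mc{A}_k,\bar{c}) \leq_m (\mc{A}_j,\bar{d})$ is recorded by $\mc{A}_j \models \varphi^m_{k,\bar{c}}(\bar{d})$, the formula $\varphi^{m+1}_{i,\bar{a}}$ can be assembled computably out of the $\varphi^m_{k,\bar{c}}$ together with the decidable relation $\leq_m$.

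The hard part, and the actual content of the theorem, is keeping the complexity at the optimal $\Pi_{m+1}$. If one simply unfolds the definition of $\leq_{m+1}$ and substitutes the $\Pi_m$ formulas, each step costs about two quantifier alternations, and iterating this only recovers the universal upper bound $\Pi_{2m}$. Getting down to $\Pi_m$ is precisely where $n$-friendliness is essential: because the $\leq_m$-relations inside the family are decidable (not merely $\mathbf{0}^{(2m)}$-computable), the back-and-forth types that actually occur behave, relative to the family, like effectively principal objects that are already pinned down at level $m$, so that the passage from level $m$ to level $m+1$ can be organized to reuse the level-$m$ formulas rather than nest a fresh pair of quantifiers. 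I expect this complexity bookkeeping — managed through a careful simultaneous recursion that also tracks the dual $\Sigma_m$ formulas $\neg\varphi^m_{k,\bar{c}}$ — rather than any single clever identity, to be the main obstacle.
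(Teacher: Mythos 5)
Your base case is correct and matches the standard approach: a computable existential diagram is exactly what lets you enumerate the finitary $\forall$-formulas true of $\bar{a}$ and conjoin them, so $\varphi^1_{i,\bar{a}}$ is fine. Your reformulation of $\leq_m$ inside the family (that $(\mc{A}_j,\bar{d}) \leq_m (\mc{A}_i,\bar{e})$ iff every type $\varphi^m_{k,\bar{c}}$ true of $\bar{d}$ is true of $\bar{e}$) is also correct, and it is indeed the reason the theorem is even possible. But the proposal then stops at exactly the point where the theorem's content lives, and your guess about what fills the gap points in the wrong direction: the missing step is a single explicit formula, not an elaborate bookkeeping recursion.

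Here is the gap. You need $\varphi^{m+1}_{i,\bar{a}}(\bar{x})$ to express ``for all $\bar{y}$ there is $\bar{a}'$ with $(\bar{x},\bar{y}) \leq_m (\bar{a},\bar{a}')$.'' Plugging your reformulation into this still leaves the pattern $\forall\bar y\,\exists\bar a'\,\forall(k,\bar c)\,(\cdots)$, which does not collapse to $\Pi_{m+1}$. The device you are missing is to replace the inner existential over $\bar a'$ by its complement: within the family, the $\Pi_m$-type of $(\bar{x},\bar{y})$ can only be one of the $\varphi^m_{j,\bar{b}\bar{b}'}$, and friendliness makes it \emph{decidable} which of these are ``bad,'' i.e.\ satisfy $(\mc{A}_j,\bar{b}\bar{b}') \nleq_m (\mc{A}_i,\bar{a}\bar{a}')$ for every $\bar{a}'\in\mc{A}_i$. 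One then writes
\[
\varphi^{m+1}_{i,\bar{a}}(\bar{x}) \;=\; \bigdoublewedge_{(j,\bar{b}\bar{b}')\ \text{bad}}\ \forall\bar{y}\ \neg\varphi^m_{j,\bar{b}\bar{b}'}(\bar{x},\bar{y}),
\]
which is a computable conjunction (because ``bad'' is decidable) of $\Pi_{m+1}$ formulas (each $\neg\varphi^m_{j,\bar{b}\bar{b}'}$ being $\Sigma_m$), hence computable $\Pi_{m+1}$; correctness follows because every $(\bar{x},\bar{y})$ ranging over the family realizes some $\varphi^m_{j,\bar{b}\bar{b}'}$, and avoiding all bad ones forces it to be a good one. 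This negation-and-exclusion move is precisely the mechanism the paper uses in its parallel Theorem on $\mathbf{0}^{(n)}$-computable $\Pi_n$-type definitions (there the set of bad pairs is only $\Sigma^0_{2m}$ rather than decidable, which is why the paper wraps it in a disjunction with $\top$ and invokes the absorption proposition; in the friendly case you can conjoin over bad pairs directly). Without this formula, your write-up does not establish the theorem, and the speculation that a ``careful simultaneous recursion'' rather than a ``single clever identity'' will do the job mischaracterizes the actual argument.
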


Suppose that we do not know that the pair of structures is $n$-friendly or that they have computable existential diagrams. How hard is it to compute such formulas? Our main goal is the following theorem, along the way to which we will prove an analogue of Theorem \ref{thm:ash-knight-sep}.

\begin{theorem}\label{thm:separating-structures}
	Let $\mc{A}$ and $\mc{B}$ be computable structures, and suppose that $\mc{A} \nleq_n \mc{B}$. Then there is a $\mathbf{0}^{(n-1)}$-computable $\Pi_n$ sentence $\varphi$ such that $\mc{A} \models \varphi$ and $\mc{B} \nmodels \varphi$.
\end{theorem}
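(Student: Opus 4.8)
The plan is to deduce Theorem~\ref{thm:separating-structures} from an effective analogue of Theorem~\ref{thm:ash-knight-sep} that drops the friendliness hypothesis, which I would establish along the way: for any two computable structures $\mc{A}$ and $\mc{B}$ and every $m$, one can assign, uniformly $\mathbf{0}^{(m)}$-computably, a $\Pi_m$ formula $\varphi^m_{\bar a}(\bar x)$ to each tuple $\bar a$ of $\mc{A}$ or of $\mc{B}$ so that for every $\mc{C}\in\{\mc{A},\mc{B}\}$ and tuple $\bar c\in\mc{C}$
\[ \mc{C}\models\varphi^m_{\bar a}(\bar c) \iff (\mc{D},\bar a)\leq_m(\mc{C},\bar c), \]
where $\mc{D}\in\{\mc{A},\mc{B}\}$ is the structure containing $\bar a$. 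In other words, $\varphi^m_{\bar a}$ is a name for the $\Pi_m$ back-and-forth type of $\bar a$ relative to the pair $\{\mc{A},\mc{B}\}$; Theorem~\ref{thm:ash-knight-sep} makes it computable when the pair is friendly, and the assertion is that in general it sits at level $\mathbf{0}^{(m)}$ of the jump hierarchy rather than at the naive level $\mathbf{0}^{(2m)}$.

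Granting this, the theorem follows quickly. Suppose $\mc{A}\nleq_n\mc{B}$. Unwinding the definition of $\leq_n$ produces a tuple $\bar b'\in\mc{B}$ such that $(\mc{B},\bar b')\nleq_{n-1}(\mc{A},\bar a')$ for \emph{every} tuple $\bar a'\in\mc{A}$ with $|\bar a'|=|\bar b'|$. Fix such a $\bar b'$; I use only that it exists, not that it can be located. Set
\[ \varphi \;:=\; \forall y_1\cdots y_{|\bar b'|}\ \neg\,\varphi^{\,n-1}_{\bar b'}(y_1,\dots,y_{|\bar b'|}). \]
As $\varphi^{\,n-1}_{\bar b'}$ is $\Pi_{n-1}$, its negation is $\Sigma_{n-1}$, and the universal prefix makes $\varphi$ a $\Pi_n$ sentence; it is $\mathbf{0}^{(n-1)}$-computable because $\varphi^{\,n-1}_{\bar b'}$ is. By the defining property of $\varphi^{\,n-1}_{\bar b'}$, a $|\bar b'|$-tuple $\bar a'$ of $\mc{A}$ satisfies $\varphi^{\,n-1}_{\bar b'}$ in $\mc{A}$ if and only if $(\mc{B},\bar b')\leq_{n-1}(\mc{A},\bar a')$, which by choice of $\bar b'$ never happens; so $\mc{A}\models\varphi$. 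Meanwhile $(\mc{B},\bar b')\leq_{n-1}(\mc{B},\bar b')$ by reflexivity of the back-and-forth relations, so $\mc{B}\models\varphi^{\,n-1}_{\bar b'}(\bar b')$, whence $\mc{B}\nmodels\varphi$. (One could instead use the $\Pi_n$ type $\varphi^n_{\emptyset}$ of $\mc{A}$, but that costs $\mathbf{0}^{(n)}$; peeling off one quantifier block and invoking the level-$(n-1)$ type of the witness $\bar b'$ is what saves the jump.)

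So everything rests on the effective analogue of Theorem~\ref{thm:ash-knight-sep}, which I would prove by recursion on $m$. For $m=0$ the assignment $\bar a\mapsto\varphi^0_{\bar a}$, with $\varphi^0_{\bar a}$ the finitary quantifier-free atomic type of $\bar a$, is computable. For the successor step one must render the clause ``$(\mc{D},\bar a)\leq_{m+1}(\mc{C},\bar c)$'', that is, ``for every $\bar c'\in\mc{C}$ there is $\bar a'\in\mc{D}$ with $(\mc{C},\bar c\bar c')\leq_m(\mc{D},\bar a\bar a')$'', as a single $\Pi_{m+1}$ formula; writing the inner $\leq_m$ out by brute force would cost $2m$ more alternations, so one instead refers to the already-built formulas $\varphi^m_{\bar a\bar a'}$ and contracts this to one alternation using reflexivity and transitivity of $\leq_m$, together with the finiteness of the language (which keeps disjunctions over atomic types of a fixed length finitary). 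The main obstacle, and the genuinely new content, is the oracle accounting. I would arrange the construction so that the only level-$m$ data it consumes are truth facts ``$\mc{C}\models\varphi^m_{\bar d}(\bar e)$'' and existential facts ``$\exists\bar e\,(\mc{C}\models\varphi^m_{\bar d}(\bar e))$'' for tuples of the two structures, and so that the $\varphi^m$'s carry a stratified index-set complexity making the former decidable from $\mathbf{0}^{(m)}$; the existential facts then cost one further jump, so the level-$(m+1)$ formulas land at $\mathbf{0}^{(m+1)}$. Maintaining this stratification through the recursion---so that each jump buys precisely one more level rather than two---is the heart of the argument.
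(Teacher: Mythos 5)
Your deduction of the theorem from an effective analogue of Theorem~\ref{thm:ash-knight-sep}---namely, that $\Pi_m$ back-and-forth types of tuples in $\{\mc{A},\mc{B}\}$ are uniformly definable by $\mathbf{0}^{(m)}$-computable $\Pi_m$ formulas---matches the paper exactly: that analogue is Theorem~\ref{thm:internal-upper-bound}, and your finishing step (pick a witnessing tuple $\bar b'$, take $\varphi=\forall\bar y\,\neg\varphi^{n-1}_{\bar b'}(\bar y)$, and use reflexivity of $\leq_{n-1}$ to see $\mc{B}\nmodels\varphi$) is precisely the paper's proof of Theorem~\ref{thm:separating-structures}.

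However, the inductive step you sketch for the analogue has a gap exactly where you flag the heart of the argument: the oracle accounting. You propose to arrange that the truth facts ``$\mc{C}\models\varphi^m_{\bar d}(\bar e)$'' be decidable from $\mathbf{0}^{(m)}$. This question is equivalent to the semantic relation $(\mc{D},\bar d)\leq_m(\mc{C},\bar e)$, which does not depend on how the $\Pi_m$-types are named, and the paper shows (in the section on maximally unfriendly structures) that the $m$-back-and-forth relation on a computable structure can be $\Pi^0_{2m}$-complete. So no ``stratified index-set complexity'' on the formulas $\varphi^m$ can push these truth facts down to $\mathbf{0}^{(m)}$; they genuinely sit at $\mathbf{0}^{(2m)}$ in the worst case, and your plan as written does not close. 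The paper's actual device is Proposition~\ref{prop:absorb} (Ash--Knight's absorption lemma): in the construction of $\varphi^{m+1}_{\bar a}$ the $\leq_m$-facts are not decided by the oracle directly but only steer the \emph{index set} of a countably infinite disjunction of $\mathbf{0}^{(m)}$-computable $\Sigma_m$ formulas; that index set is $\mathbf{0}^{(2m)}$-computable, hence $\Sigma^0_m$ relative to $\mathbf{0}^{(m+1)}$, and absorption relativized to $\mathbf{0}^{(m+1)}$ collapses the disjunction to a single $\mathbf{0}^{(m+1)}$-computable $\Sigma_m$ formula. That is what makes each inductive step cost one jump rather than two; without it, the recursion you describe would only yield $\mathbf{0}^{(2m)}$-computable $\Pi_m$ definitions.
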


\noindent We will also obtain in Theorem \ref{thm:compute-scott-sentence} that if a computable structure has a $\Pi_n$ Scott sentence then it has a  $\mathbf{0}^{(n)}$-computable $\Pi_n$ Scott sentence.

In the proofs we will make multiple uses of the following fact which can be found as Proposition 7.12 of the book by Ash and Knight \cite{AshKnight}.

\begin{proposition}\label{prop:absorb}
	Suppose that a formula $\varphi(\bar{x})$ is a disjunction of a $\Sigma^0_n$ set of (indices for) computable $\Sigma_n$ formulas. Then $\varphi(\bar{x})$ is equivalent to a computable $\Sigma_n$ formula, and we can find this computable $\Sigma_n$ formula uniformly. 
\end{proposition}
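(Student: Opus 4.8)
The plan is to remove the outer $\Sigma^0_n$ quantifiers by \emph{absorbing} the membership condition ``$e\in S$'' into the body of each disjunct; once this is done the disjunction becomes an honestly (computably) indexed one, and the standard closure properties of computable $\Sigma_n$ formulas finish the argument. Write $\varphi(\bar{x}) \equiv \bigvee_{e\in S}\varphi_e(\bar{x})$, where $S\subseteq\N$ is $\Sigma^0_n$ and $\varphi_e$ is the computable $\Sigma_n$ formula with Gödel index $e$ under a fixed effective numbering; we are handed a $\Sigma^0_n$-index for $S$ and must produce, uniformly from it, an index for a computable $\Sigma_n$ formula equivalent to $\varphi$.

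First, from the $\Sigma^0_n$-index fix a computable relation $R$ with $e\in S \iff \exists a_1\,\forall a_2\cdots\, R(e,a_1,\dots,a_n)$, and for each $e$ form the ``truth-value'' sentence $\sigma_e := \bigvee_{a_1}\bigwedge_{a_2}\cdots\big(R(e,\bar{a})\text{ coded as }\top\text{ or }\bot\big)$, reading each number quantifier as an infinite disjunction or conjunction over $\N$ with an empty block of object-variables. This $\sigma_e$ is a genuine computable $\Sigma_n$ sentence whose index is computable in $e$ (every index set involved is $\N$, and the innermost literal is decided by the computable $R$); it has no free variables and mentions no structure symbols, and for every structure $\mc{M}$ one has $\mc{M}\models\sigma_e \iff e\in S$, so $\sigma_e$ is logically valid when $e\in S$ and unsatisfiable when $e\notin S$. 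Second, set $\gamma_e(\bar{x}):=\sigma_e\wedge\varphi_e(\bar{x})$; using that computable $\Sigma_n$ formulas are uniformly closed under finite conjunction (put each conjunct in the form $\bigvee_i\exists\bar{y}_i\,\psi_i$ with $\psi_i$ computable $\Pi_{n-1}$, distribute the disjunctions, and merge the $\Pi_{n-1}$ matrices), $\gamma_e$ is again a computable $\Sigma_n$ formula with index computable in $e$, and $\gamma_e\equiv\varphi_e$ if $e\in S$ while $\gamma_e\equiv\bot$ otherwise. Third, since $\bigvee_{e\in S}\varphi_e \equiv \bigvee_{e\in\N}\gamma_e$, we have exhibited $\varphi$ as a disjunction over the computable index set $\N$ of uniformly presented computable $\Sigma_n$ formulas; flattening this (computable $\Sigma_n$ formulas are closed under disjunctions indexed by c.e.\ — here computable — sets) yields a computable $\Sigma_n$ formula equivalent to $\varphi$, and tracking indices through the construction shows it is obtained uniformly from the $\Sigma^0_n$-index for $S$.

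There is no serious obstacle; the one conceptual point, and the reason the levels match with no loss, is that a $\Sigma^0_n$ condition on natural numbers has exactly the quantifier complexity of an object-level computable $\Sigma_n$ formula, so it may be coded by a logically-valid-or-unsatisfiable computable $\Sigma_n$ sentence and then absorbed conjunctively into the disjuncts without raising complexity. The remainder is the routine bookkeeping that every infinite disjunction and conjunction produced has a c.e.\ (in fact computable) index set and that all the passages are uniform. As in Ash and Knight, the same argument relativizes and goes through for computable $\Sigma_\alpha$ formulas with $\Sigma^0_\alpha$ index sets for any computable ordinal $\alpha$, though the finite case is all we need.
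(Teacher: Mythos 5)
Your proof is correct and is exactly the standard argument; the paper does not reprove this proposition but simply cites Proposition~7.12 of Ash and Knight \cite{AshKnight}, where this ``absorption'' technique appears. Encoding the $\Sigma^0_n$ membership condition $e\in S$ as a computable $\Sigma_n$ sentence $\sigma_e$ that is valid if $e\in S$ and unsatisfiable otherwise --- by reading the arithmetic number quantifiers as infinitary disjunctions and conjunctions over $\mathbb{N}$ with empty object-variable blocks --- then conjoining $\sigma_e$ with $\varphi_e$ and flattening the resulting $\mathbb{N}$-indexed disjunction, is precisely the intended proof, and the two closure facts you invoke (computable $\Sigma_n$ formulas are uniformly closed under finite conjunction and under disjunction over a c.e.\ index set) are routine and correctly applied.
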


\noindent Essentially the idea is that the complexity of which formulas are in the disjunction can be absorbed into the complexity of the formulas themselves. This proposition relativizes, and we will use it in relativized form.

We now prove our theorem about defining $\Pi_n$ types, the analogue of Theorem \ref{thm:ash-knight-sep}. 

\begin{theorem}\label{thm:internal-upper-bound}
	Let $(\mc{A}_i)_{i \in \omega}$ be a computable list of computable structures. For each $n$ and $\bar{a} \in \mc{A}_i$ there is a $\mathbf{0}^{(n)}$-computable $\Pi_n$ sentence $\varphi^n_{i,\bar{a}}$ such that for all $\bar{b} \in \mc{A}_j$
	\[ (\mc{A}_i,\bar{a}) \leq_n (\mc{A}_j,\bar{b}) \; \Longleftrightarrow \mc{A}_j \models \varphi_{i,\bar{a}}^n(\bar{b}).\]
	Moreover, this is uniform in $i$, $\bar{a}$, and $n$.
\end{theorem}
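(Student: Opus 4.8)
The plan is to build the formulas $\varphi^n_{i,\bar a}$ by induction on $n$, directly mirroring the inductive definition of the back-and-forth relations, and to check at each stage that the arithmetic cost of ``knowing which subformulas to include'' stays at exactly $\mathbf 0^{(n)}$. The base case $n=0$ is immediate: $(\mc A_i,\bar a)\leq_0 (\mc A_j,\bar b)$ says $\bar a$ and $\bar b$ satisfy the same atomic and negated atomic formulas among the first $|\bar a|$ symbols, which for a fixed tuple $\bar a$ is a finite quantifier-free condition on $\bar b$; since the structures are uniformly computable, the finite conjunction of atomic/negated-atomic formulas true of $\bar a$ is a computable $\Pi_0$ formula, uniformly in $i,\bar a$.

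For the inductive step, recall $(\mc A_i,\bar a)\leq_{n}(\mc A_j,\bar b)$ holds iff for every $\bar b'\in\mc A_j$ there is $\bar a'\in\mc A_i$ with $(\mc A_j,\bar b\bar b')\leq_{n-1}(\mc A_i,\bar a\bar a')$. By the induction hypothesis applied to the list $(\mc A_i)$ (and using that it is closed enough under tuples, or rather: the statement is uniform over all indices $j$ and all tuples $\bar b\bar b'$), we have $\mathbf 0^{(n-1)}$-computable $\Pi_{n-1}$ formulas $\psi^{n-1}_{j,\bar b\bar b'}(\bar x)$ defining $(\mc A_j,\bar b\bar b')\leq_{n-1}(\cdot,\cdot)$. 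I want to define
\[
\varphi^n_{i,\bar a}(\bar y) \;:=\; \bigdoublewedge_{\bar y'}\;\exists \bar x'\; \bigdoublevee\Bigl\{\, \psi^{n-1}_{j,\bar a\bar a'}(\bar y\bar y') : \bar a'\in\mc A_i\ \text{and}\ (\mc A_i,\bar a\bar a')\leq_{n-1}(\mc A_i,\bar a\bar a') \text{ is vacuous, so just: } \bar a'\in\mc A_i \,\Bigr\},
\]
but the cleaner formulation is: $\varphi^n_{i,\bar a}(\bar y)$ asserts, for each new variable $\bar y'$, that there exists $\bar x'$ such that $(\mc A_i,\bar a\bar x')$ and $(\bar y,\bar y')$ are $\leq_{n-1}$-related, and the latter is captured by disjuncting over all $\bar a'\in\mc A_i$ the formula $\psi^{n-1}_{i,\bar a\bar a'}$ evaluated at $\bar y\bar y'$ — except one has to flip the direction. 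Concretely, $(\mc A_j,\bar b\bar b')\leq_{n-1}(\mc A_i,\bar a\bar a')$ is captured by $\psi^{n-1}_{j,\bar b\bar b'}(\bar a\bar a')$, which is the wrong index to use in a uniform formula; so instead I will prove the induction hypothesis in the symmetric-enough form that gives me, for each $\bar a'$, a $\Pi_{n-1}$ formula $\chi_{i,\bar a\bar a'}(\bar z)$ with $\mc A_j\models\chi_{i,\bar a\bar a'}(\bar b\bar b')\iff (\mc A_j,\bar b\bar b')\leq_{n-1}(\mc A_i,\bar a\bar a')$ — i.e. exactly $\varphi^{n-1}$ with roles swapped. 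Since the back-and-forth relations are symmetric in structure (the IH produces formulas for $\leq_{n-1}$ in the argument on the right), this is just $\varphi^{n-1}_{i,\bar a\bar a'}$ read appropriately; the quantifier alternation $\bigdoublewedge_{\bar y'}\exists\bar x'\,(\Pi_{n-1})$ is $\Pi_n$, as required.

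The real content is the complexity bookkeeping, and this is where Proposition \ref{prop:absorb} does the work. Having written $\varphi^n_{i,\bar a}(\bar y)=\bigdoublewedge_{\bar y'}\exists\bar x'\, \bigl(\bigdoublevee_{\bar a'} \chi_{i,\bar a\bar a'}(\bar y\bar y')\bigr)$, the inner disjunction ranges over all finite tuples $\bar a'$ from $\mc A_i$, but with a side condition: we only want those $\bar a'$ for which $\chi$ is a genuine $\Pi_{n-1}$ formula whose index we can name; by the uniformity clause of the IH these indices are computed uniformly from $\bar a'$, so the disjunction is over a $\mathbf 0^{(n-1)}$-computable (indeed computable) set of indices for $\mathbf 0^{(n-1)}$-computable $\Pi_{n-1}$ formulas. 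Applying the relativized Proposition \ref{prop:absorb} (dualized to $\Pi$: a conjunction of a $\Pi^0_{n-1}$-set of computable-in-$\mathbf 0^{(n-1)}$ $\Pi_{n-1}$ formulas is a single such formula, uniformly), the disjunction collapses to a single $\mathbf 0^{(n-1)}$-computable $\Sigma_n$ formula; prefixing $\bigdoublewedge_{\bar y'}$ — a conjunction over all of $\mc A_j$, hence a conjunction of a computable set of formulas — keeps us $\Pi_n$ and $\mathbf 0^{(n-1)}$-computable, which is a fortiori $\mathbf 0^{(n)}$-computable. I expect the main obstacle to be precisely packaging the induction hypothesis in the ``swapped'' form so that the formulas I need to disjunct over are indexed uniformly by $\bar a'$ alone (not by the varying pair $(j,\bar b\bar b')$), together with tracking that the set of indices in each disjunction/conjunction really is $\Sigma^0_{n-1}$ (or computable in $\mathbf 0^{(n-1)}$) so that Proposition \ref{prop:absorb} applies at the right level; everything else is routine unwinding of definitions and the uniformity is inherited automatically from the uniformity in the statement of Proposition \ref{prop:absorb} and the inductive construction.
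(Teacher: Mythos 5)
There is a genuine gap at the heart of the inductive step, and it is exactly where you flag your own uncertainty. You need a $\Pi_{n-1}$ formula $\chi_{i,\bar a\bar a'}(\bar z)$ such that $\mc A_j\models\chi_{i,\bar a\bar a'}(\bar b\bar b')\iff(\mc A_j,\bar b\bar b')\leq_{n-1}(\mc A_i,\bar a\bar a')$, and you claim this is ``just $\varphi^{n-1}_{i,\bar a\bar a'}$ read appropriately.'' It is not. The inductive hypothesis gives formulas defining $\{\bar d:(\mc A_i,\bar c)\leq_{n-1}(\cdot,\bar d)\}$, i.e.\ the set of tuples that lie \emph{above} a fixed tuple in the $\leq_{n-1}$ ordering. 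What you need is $\{\bar d:(\cdot,\bar d)\leq_{n-1}(\mc A_i,\bar c)\}$, the tuples lying \emph{below} a fixed tuple. The relation $\leq_{n-1}$ is asymmetric and these are genuinely different sets; the first is characterized by the $\Pi_{n-1}$-type of $\bar c$ (which is $\Pi_{n-1}$-definable within a countable class), whereas the second is characterized by the $\Sigma_{n-1}$-type of $\bar c$, and unwinding the inductive definition of $\leq_{n-1}$ in that direction gives a formula of the shape $\bigdoublewedge_{\bar c'}\exists\bar d'\,(\Pi_{n-2})$, which is $\Pi_n$ in general, not $\Pi_{n-1}$ and not $\Sigma_{n-1}$. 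So the ``swapped'' form of the hypothesis you want to invoke has not been established, and your complexity bookkeeping inherits the error: you end up claiming a $\mathbf 0^{(n-1)}$-computable $\Pi_n$ formula, which is strictly stronger than the theorem's $\mathbf 0^{(n)}$ bound, and that extra jump of savings is a symptom of the missing cost.

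The paper's proof sidesteps this entirely by never writing down a formula for the ``wrong'' direction. Instead it defines
\[ \varphi^{n+1}_{i,\bar{a}}(\bar{x}) = \bigdoublewedge_j \bigdoublewedge_{\bar{b},\bar{b}' \in \mc{A}_j} \forall \bar{y}\; \bigdoublevee_{\bar{a}' \in \mc{A}_i}
\begin{cases}
\neg \varphi^{n}_{j,\bar{b}\bar{b}'}(\bar{x},\bar{y}) & (\mc{A}_j,\bar{b}\bar{b}') \nleq_n (\mc{A}_i,\bar{a}\bar{a}') \\
\top & (\mc{A}_j,\bar{b}\bar{b}') \leq_n (\mc{A}_i,\bar{a}\bar{a}')
\end{cases}
\]
so the only back-and-forth formulas used are the $\varphi^n_{j,\bar b\bar b'}$ from the induction hypothesis (the ``right'' direction), applied to the free variables. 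The information you wanted $\chi$ to carry is instead offloaded into the \emph{choice} of which disjuncts are $\top$ and which are $\neg\varphi^n_{j,\bar b\bar b'}$, a $\mathbf 0^{(2n)}$-computable side condition. That side condition is then absorbed by Proposition \ref{prop:absorb} relativized to $\mathbf 0^{(n+1)}$ (a $\mathbf 0^{(2n)}$-computable index set is $\Sigma_n$ relative to $\mathbf 0^{(n+1)}$), which is precisely why the final bound is $\mathbf 0^{(n)}$ for a $\Pi_n$ formula and not $\mathbf 0^{(n-1)}$. Your plan to ``directly mirror the inductive definition'' is the natural first thought, but it founders on the asymmetry of $\leq_n$; the content of the paper's proof is the device for routing around that asymmetry, and that device is what you would need to supply.
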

\begin{proof}
	We argue inductive, noting that for each $n$ the statement of the theorem also implies that the set of pairs $\{ (i,\bar{a},j,\bar{b}) : (\mc{A}_i,\bar{a}) \leq_n (\mc{A}_j,\bar{b})\}$ is $\mathbf{0}^{(2n)}$-computable.
	
	For $n = 1$, we have $(\mc{A}_i,\bar{a}) \leq_1 (\mc{A}_j,\bar{b})$ if and only if every $\forall$-formula true of $\bar{a}$ is true of $\bar{b}$. For each $(\mc{A}_i,\bar{a})$, using $\mathbf{0}'$ we can list out the (finitary) $\forall$-formulas true of $\bar{a}$, and take the conjunction of these. This gives a $\mathbf{0}'$-computable $\Pi_1$ formula $\varphi_{i,\bar{a}}^1$ such that 
	\[ (\mc{A}_i,\bar{a}) \leq_1 (\mc{A}_j,\bar{b}) \; \Longleftrightarrow \mc{A}_j \models \varphi^1_{i,\bar{a}}(\bar{b}).\]
	
	Now we will verify the result for $n+1$ assuming we know it for $n$. For each $i$ and $\bar{c} \in \mc{A}_i$ let $\varphi^n_{i,\bar{c}}(\bar{x})$ be a $\mathbf{0}^{(n)}$-computable $\Pi_n$ formula such that
	\[ (\mc{A}_i,\bar{c}) \leq_n (\mc{A}_j,\bar{b}) \; \Longleftrightarrow \mc{A}_j \models \varphi_{i,\bar{c}}^n(\bar{b}).\]
	Then for each $i$ and each tuple $\bar{a} \in \mc{A}_i$ consider the $\Pi_{n+1}$ formula
	\[ \varphi^{n+1}_{i,\bar{a}}(\bar{x}) = \bigdoublewedge_j \bigdoublewedge_{\bar{b},\bar{b}' \in \mc{A}_j} \forall \bar{y} \bigdoublevee_{\bar{a}' \in \mc{A}_i} \begin{cases}
		\neg \varphi^{n}_{j,\bar{b}\bar{b}'}(\bar{x},\bar{y})	& (\mc{A}_j,\bar{b}\bar{b}') \nleq_n (\mc{A}_i,\bar{a}\bar{a}') \\
		\top & (\mc{A}_j,\bar{b}\bar{b}') \leq_n (\mc{A}_i,\bar{a}\bar{a}')
	\end{cases}.\]
	This notation means that the disjunct corresponding to $\bar{a}'$ is either $\neg \varphi^n_{j,\bar{b}\bar{b'}}(\bar{x},\bar{y})$ or $\top$, the true formula, depending on whether or not $\mc{A}_j,\bar{b}\bar{b}') \leq_n (\mc{A}_i,\bar{a}\bar{a}')$.
	By the inductive hypothesis, for fixed $\bar{b},\bar{b}' \in \mc{A}_j$ and $\bar{a} \in \mc{A}_i$, deciding for some $\bar{a}' \in \mc{A}_i$ whether $(\mc{A}_j,\bar{b}\bar{b}') \leq_n (\mc{A}_i,\bar{a}\bar{a}')$ is $\mathbf{0}^{(2n)}$-computable. We use the proposition from \cite{AshKnight} highlighted above as Proposition \ref{prop:absorb} relativized to $\mathbf{0}^{(n+1)}$. The subformula
	\[ \bigdoublevee_{\bar{a}' \in \mc{A}_i} \begin{cases}
		\neg \varphi^n_{j,\bar{b}\bar{b}'}(\bar{x},\bar{y})	& (\mc{A}_j,\bar{b}\bar{b}') \nleq_n (\mc{A}_i,\bar{a}\bar{a}') \\
		\top & (\mc{A}_j,\bar{b}\bar{b}') \leq_n (\mc{A}_i,\bar{a}\bar{a}')
	\end{cases}\]
	is a disjunction over a $\mathbf{0}^{(2n)}$-computable (and hence $\Sigma_{2n+1}$ or equivalently $\Sigma_{n}$ relative to $\mathbf{0}^{(n+1)}$) set of indices for $\mathbf{0}^{(n)}$-computable $\Sigma_n$ formulas, and is thus equivalent to a  $\mathbf{0}^{(n+1)}$-computable $\Sigma_n$ formula. Thus $\varphi^{n+1}_{i,\bar{a}}(\bar{x})$ is equivalent to a $\mathbf{0}^{(n+1)}$-computable $\Pi_{n+1}$ formula. We must show that $\varphi^{n+1}_{i,\bar{a}}(\bar{x})$ is such that
	\[ (\mc{A}_i,\bar{a}) \leq_{n+1} (\mc{A}_j,\bar{b}) \; \Longleftrightarrow \mc{A}_j \models \varphi^{n+1}_{i,\bar{a}}(\bar{b}).\]
	\begin{itemize}
		\item First, we note that $\varphi^{n+1}_{i,\bar{a}}$ is true of $\bar{a}$, as for any $j$ and $\bar{b},\bar{b}' \in \mc{A}_j$ and $\bar{a}' \in \mc{A}_i$ (the value of $\bar{y}$), we have that the disjunct corresponding to $\bar{a}'$ is true: if $(\mc{A}_j,\bar{b}\bar{b}') \leq_n (\mc{A}_i,\bar{a}\bar{a}')$, then the disjunct is $\top$, and if  $(\mc{A}_j,\bar{b}\bar{b}') \nleq_n (\mc{A}_i,\bar{a}\bar{a}')$ then we have $\neg \varphi^n_{j,\bar{b}\bar{b}'}(\bar{a},\bar{a}')$. Since $\varphi^{n+1}_{i,\bar{a}}$ is $\Pi_{n+1}$ and true of $\bar{a}$, it is true of any $\bar{b} \in \mc{A}_j$ with $(\mc{A}_i,\bar{a}) \leq_{n+1} (\mc{A}_j,\bar{b})$.
		
		\item Suppose that $\varphi^{n+1}_{i,\bar{a}}$ is true of $\bar{b}$. Given $\bar{b}' \in \mc{A}_j$, taking the conjunct corresponding to $j$ and $\bar{b},\bar{b}' \in \mc{A}_j$ and taking $\bar{y} = \bar{b}'$, we have that
		\[\bigdoublevee_{\bar{a}'} \begin{cases}
			\neg \varphi^n_{j,\bar{b}\bar{b}'}(\bar{b},\bar{b}')	& (\mc{A}_j,\bar{b}\bar{b}') \nleq_n (\mc{A}_i,\bar{a}\bar{a}') \\
			\top & (\mc{A}_j,\bar{b}\bar{b}') \leq_n (\mc{A}_i,\bar{a}\bar{a}')
		\end{cases}\]
		Since $(\mc{A}_j,\bar{b}\bar{b}') \leq_n (\mc{A}_j,\bar{b}\bar{b}')$ we have $\varphi^n_{j,\bar{b}\bar{b}'}(\bar{b},\bar{b'})$ and so it must be that there is some disjunct that is $\top$, i.e., some $\bar{a}' \in \mc{A}_i$ with $(\mc{A}_j,\bar{b}\bar{b}') \leq_n (\mc{A}_i,\bar{a}\bar{a}')$. Thus $(\mc{A}_i,\bar{a}) \leq_{n+1} (\mc{A}_j,\bar{b})$.
	\end{itemize}
	This completes the inductive argument.
\end{proof}

We are now ready to prove our intended application.

\begin{proof}[Proof of Theorem \ref{thm:separating-structures}]
	Suppose that $\mc{A}$ and $\mc{B}$ are computable structures and that $\mc{A} \nleq_n \mc{B}$. Then there is $\bar{b} \in \mc{B}$ such that for all $\bar{a} \in \mc{A}$ we have $(\mc{B},\bar{b}) \nleq_{n-1} (\mc{A},\bar{a})$. By the previous theorem (using just $\mc{A}$ and $\mc{B}$ as the sequence of structures), there is a $\mathbf{0}^{(n-1)}$-computable $\Pi_{n-1}$ formula $\psi(\bar{x})$ such that
	\[ (\mc{B},\bar{b}) \leq_{n-1} (\mc{A},\bar{a}) \; \Longleftrightarrow \mc{A} \models \psi(\bar{a}).\]
	Let $\hat{\varphi} = \exists \bar{x}\; \psi(\bar{x})$. This is a $\mathbf{0}^{(n-1)}$-computable $\Sigma_{n}$ sentence such that $\mc{B} \models \varphi$ and $\mc{A} \nmodels \varphi$. The negation of $\varphi$ is a $\mathbf{0}^{(n-1)}$-computable $\Pi_{n}$ sentence true in $\mc{A}$ but not $\mc{B}$, proving the theorem.
\end{proof}

Since we have proved the lemma, we will also prove the following fact which is of independent interest (and which was also the author's initial motivation for the analysis above).

\begin{theorem}\label{thm:compute-scott-sentence}
	If a computable structure $\mc{A}$ has a $\Pi_n$ Scott sentence then it has a $\mathbf{0}^{(n)}$-computable $\Pi_n$ Scott sentence.
\end{theorem}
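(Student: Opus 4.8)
The plan is to combine Theorem~\ref{thm:internal-upper-bound} with the standard observation, recalled in the excerpt, that if $\mc{A}$ has a $\Pi_n$ Scott sentence then for all countable $\mc{B}$ we have $\mc{A} \leq_n \mc{B} \Longleftrightarrow \mc{B} \cong \mc{A}$. Thus a sentence asserting ``$\mc{A} \leq_n -$'' is already a Scott sentence for $\mc{A}$, and the only remaining task is to produce such a sentence at the right level of complexity and with the right computability bound. First I would apply Theorem~\ref{thm:internal-upper-bound} to the one-element list consisting of $\mc{A}$ itself (with the empty tuple), obtaining a $\mathbf{0}^{(n)}$-computable $\Pi_n$ sentence $\varphi^n_{\mc{A}}$ such that for all $\mc{B}$ in that list, $\mc{A} \leq_n \mc{B} \Longleftrightarrow \mc{B} \models \varphi^n_{\mc{A}}$. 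The subtlety is that Theorem~\ref{thm:internal-upper-bound} as stated only guarantees the equivalence for $\mc{B}$ ranging over structures in the given computable list, not for arbitrary countable $\mc{B}$; so I need to argue that the formula $\varphi^n_{\mc{A}}$ produced actually works for all countable $\mc{B}$.

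The key step is therefore to check that $\varphi^n_{\mc{A}}$ is genuinely a $\Pi_n$-definition of the back-and-forth type of $\mc{A}$ over \emph{all} countable structures, not just the listed ones. Inspecting the inductive construction in the proof of Theorem~\ref{thm:internal-upper-bound}: the base case takes the conjunction of all finitary $\forall$-formulas true of the tuple, which plainly defines $\leq_1$ over all structures; and in the inductive step the formula $\varphi^{n+1}_{i,\bar a}$ is built from the $\varphi^n$'s in a way that mirrors the definition of $\leq_{n+1}$, so the two verification bullets there actually go through for an arbitrary $\mc{B}$ and arbitrary $\bar b \in \mc{B}$, using only that $\varphi^n_{j,\bar b\bar b'}$ correctly picks out the $\leq_n$-type of $(\mc{B},\bar b\bar b')$ among tuples of $\mc{A}$ — which is exactly the inductive hypothesis in the ``one-sided'' form we need. (Concretely: take the computable list to be $\mc{A}$ together with $\mc{B}$; run Theorem~\ref{thm:internal-upper-bound} relative to $\mc{B}$ if one wants, or simply observe that the relevant disjunctions and conjunctions in the formula only range over tuples of $\mc{A}$, so the resulting sentence is independent of $\mc{B}$ and the verification bullets are valid for any $\mc{B}$.) Once this is in hand, $\varphi^n_{\mc{A}}$ is a $\mathbf{0}^{(n)}$-computable $\Pi_n$ sentence with $\mc{C} \models \varphi^n_{\mc{A}} \Longleftrightarrow \mc{A} \leq_n \mc{C}$ for every countable $\mc{C}$.

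Finally, combine this with the hypothesis: since $\mc{A}$ has a $\Pi_n$ Scott sentence, $\mc{A} \leq_n \mc{C} \Longleftrightarrow \mc{C} \cong \mc{A}$ for all countable $\mc{C}$. Hence $\mc{C} \models \varphi^n_{\mc{A}} \Longleftrightarrow \mc{C} \cong \mc{A}$, so $\varphi^n_{\mc{A}}$ is a $\mathbf{0}^{(n)}$-computable $\Pi_n$ Scott sentence for $\mc{A}$, as desired. I expect the only real obstacle to be the bookkeeping in the previous paragraph — namely making precise that the formula extracted from Theorem~\ref{thm:internal-upper-bound} defines the back-and-forth type over all structures and not merely over a fixed computable list — but this is exactly the content of the two verification bullets in that proof, which never used that $\mc{B}$ came from the list, so it should be a short remark rather than a new argument.
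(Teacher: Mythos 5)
Your proposal has a genuine gap. Theorem~\ref{thm:internal-upper-bound} does not give a formula defining the $\leq_n$-type of $\mc{A}$ among \emph{all} countable structures---only among those in the fixed computable list. In the inductive step the conjunction ranges over tuples $\bar{b},\bar{b}' \in \mc{A}_j$ for every $j$ in the list, not only over tuples of $\mc{A}_i$, and the second verification bullet uses precisely the conjunct indexed by the tuple $\bar{b}\bar{b}'$ of the structure being tested; that conjunct only exists when that structure appears in the list. Your parenthetical claim that ``the relevant disjunctions and conjunctions in the formula only range over tuples of $\mc{A}$'' is therefore not correct, and the bullets do not go through for an arbitrary $\mc{B}$.

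Concretely, take the list to be $\{\mc{A}\}$ and $\bar{a}=\varnothing$. Then in $\varphi^{n+1}_{\varnothing}$ the outer conjunction is over tuples $\bar{b}'\in\mc{A}$, and for each such $\bar{b}'$ the inner disjunction $\bigdoublevee_{\bar{a}'\in\mc{A}}$ is immediately $\top$: choosing $\bar{a}'=\bar{b}'$ gives $(\mc{A},\bar{b}')\leq_n(\mc{A},\bar{a}')$ by reflexivity, so that disjunct is the $\top$-case. Hence $\varphi^{n+1}_{\varnothing}$ is satisfied by every structure whatsoever and is certainly not a Scott sentence for $\mc{A}$. Switching to the list $\{\mc{A},\mc{B}\}$ produces a sentence that depends on $\mc{B}$, so it cannot serve as a single Scott sentence. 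There is also a more structural obstruction: as the paper notes (citing \cite{ChenGonzalezHT}), $\Pi_n$-types are in general only $\Pi_{n+2}$-definable across all countable structures, so no $\Pi_n$ sentence can define $\{\mc{B}:\mc{A}\leq_n\mc{B}\}$ without using the hypothesis that $\mc{A}$ has a $\Pi_n$ Scott sentence in an essential way; your argument only invokes that hypothesis at the very end, after the definability has already been (incorrectly) asserted. The paper's proof instead uses the hypothesis up front to obtain $\Sigma_n$ formulas $\theta_{\bar{a}}$ defining the automorphism orbits of tuples within $\mc{A}$, assembles from them the canonical Scott sentence $\chi$ (which really is a Scott sentence over all countable structures, not merely those in a fixed list), and then applies Proposition~\ref{prop:absorb} to cut the oracle needed to compute $\chi$ from $\mathbf{0}^{(2n)}$ down to the bound stated in the theorem.
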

\begin{proof}
	It will be more convenient to prove the theorem for $n+1$ rather than $n$. Suppose that $\mc{A}$ has a $\Pi_{n+1}$ Scott sentence; we must show that it has a $\mathbf{0}^{(n+1)}$-computable Scott sentence. Since $\mc{A}$ has a $\Pi_{n+1}$ Scott sentence we know that for each tuple $\bar{a}$ there is a $\Sigma_n$ formula $\theta_{\bar{a}}(\bar{x})$ such that
	\[ \bar{a} \cong \bar{b} \; \Longleftrightarrow\; \bar{a} \leq_n \bar{b}\; \Longleftrightarrow\; \mc{A} \models \theta_{\bar{a}}(\bar{b}).\]
	We may assume that $\theta_{\bar{a}}(\bar{x})$ is of the form
	\[ \theta_{\bar{a}}(\bar{x}) = \exists \bar{y}\; \varphi^{n-1}_{\bar{a}\bar{a}'}(\bar{x},\bar{y}) \]
	for some $\bar{a}'$, where $\varphi^{n-1}_{\bar{a}\bar{a}'}$ is the $\mathbf{0}^{(n-1)}$-computable $\Pi_{n-1}$ formula obtained from Theorem \ref{thm:internal-upper-bound} with $\bar{a}\bar{a}' \leq_{n-1} \bar{b} \bar{b}'$ if and only if $\mc{A} \models \varphi^{n-1}_{\bar{a}\bar{a}'}(\bar{b},\bar{b}')$. The reason for this is as follows. If $\theta_{\bar{a}}(\bar{x})$ was of the form $\bigdoublevee_k \;\exists \bar{y}_k \; \psi_k(\bar{x},\bar{y}_k)$ where $\psi_k$ is $\Pi_{n-1}$ then take any $k$ and $\bar{a}'$ such that $\mc{A} \models \psi_k(\bar{a},\bar{a}')$. Since $\psi_k(\bar{x},\bar{y})$ is $\Pi_{n-1}$ and holds of $\bar{a},\bar{a}'$, $\varphi^{n-1}_{\bar{a}\bar{a}'}(\bar{x},\bar{y})$ implies $\psi_k(\bar{x},\bar{y})$ within $\mc{A}$. Thus $\exists \bar{y}\; \varphi^{n-1}_{\bar{a}\bar{a}'}(\bar{x},\bar{y})$ implies $\theta_{\bar{a}}(\bar{x})$ within $\mc{A}$. On the other hand, if $\mc{A} \models \theta_{\bar{a}}(\bar{b})$ then there is an automorphism taking $\bar{a}$ to $\bar{b}$, and so $\mc{A} \models \exists \bar{y}\;\varphi^{n-1}_{\bar{a}\bar{a}'}(\bar{b},\bar{y})$. Thus $\exists \bar{y}\;\varphi^{n-1}_{\bar{a}\bar{a}'}(\bar{b},\bar{y})$ and $\theta_{\bar{a}}(\bar{x})$ are equivalent within $\mc{A}$.
	
	This formula $\theta_{\bar{a}}(\bar{x})$ is a $\mathbf{0}^{(n-1)}$-computable $\Sigma_n$ formula as $\varphi^{n-1}_{\bar{a}\bar{a}'}(\bar{x},\bar{y})$ is a $\mathbf{0}^{(n-1)}$-computable $\Pi_{n-1}$ formula. Given $\bar{a}$, we may find the appropriate $\bar{a}'$ and thus the formula $\theta_{\bar{a}}(\bar{x})$ using $\mathbf{0}^{(2n)}$ by searching for a $\bar{a}'$ such that for all $\bar{b},\bar{b}'$, if $\mc{A} \models \varphi^{n-1}_{\bar{a}\bar{a}'}(\bar{b},\bar{b}')$ then $\bar{a} \leq_n \bar{b}$. For each $\bar{a}'$, this can be done using $\mathbf{0}^{(2n)}$ since we can decide whether $\mc{A} \models \varphi^{n-1}_{\bar{a}\bar{a}'}(\bar{b},\bar{b}')$ using $\mathbf{0}^{(2n-2)}$ and deciding whether $\bar{a} \leq_n \bar{b}$ is $\Pi^0_{2n}$.
	
	Now we can build the canonical Scott sentence for $\mc{A}$ as follows:
	\[ \chi = \bigdoublewedge_{\bar{a} \in \mc{A}} \forall \bar{x}  \left[ \bigg(\theta_{\bar{a}}(\bar{x}) \longrightarrow D(\bar{x}) = D(\bar{a})\bigg) \wedge \bigg(\bigdoublewedge_{a' \in \mc{A}} \exists y \; \theta_{\bar{a}a'}(\bar{x},y) \bigg) \wedge \bigg( \forall y \bigdoublevee_{a' \in \mc{A}} \theta_{\bar{a}a'}(\bar{x},y) \bigg)\right].\]
	We write $D(\bar{x})$ for the atomic diagram of $\bar{x}$, which (if the language is infinite) we assume to consist of only the first $|\bar{x}|$-many symbols in the language. This is a $\mathbf{0}^{(2n)}$-computable $\Pi_{n+1}$ formula. We now use the proposition of Ash and Knight \cite{AshKnight} highlighted above as Proposition \ref{prop:absorb} to improve this. For fixed $\bar{a}$, the disjunction 
	\[\bigdoublevee_{a' \in \mc{A}} \theta_{\bar{a}a'}(\bar{x},y)\]
	is a disjunction of a $\mathbf{0}^{(2n)}$-computable, and hence $\Sigma_{n}$ relative to $\mathbf{0}^{(n+1)}$, set of (indices for) $\mathbf{0}^{(n-1)}$-computable $\Sigma_n$ formulas. Thus the disjunction is equivalent to a $\mathbf{0}^{(n+1)}$-computable $\Sigma_n$ formula. Also for fixed $\bar{a}$, the conjunction 
	\[\bigdoublewedge_{a' \in \mc{A}} \exists y\; \theta_{\bar{a}a'}(\bar{x},y) \]
	is a conjunction of a $\mathbf{0}^{(2n)}$-computable, and hence $\Pi_{n+1}$ relative to $\mathbf{0}^{(n)}$, set of (indices for) $\mathbf{0}^{(n-1)}$-computable $\Sigma_n$ (hence $\mathbf{0}^{(n-1)}$-computable $\Pi_{n+1}$) formulas. Thus the conjunction is equivalence to a $\mathbf{0}^{(n)}$-computable $\Pi_{n+1}$ formula. It is $\mathbf{0}^{(2n)}$-computable to find these formulas given $\bar{a}$.
	
	Finally, $\chi$ is a conjunction of a $\mathbf{0}^{(2n)}$-computable, hence $\Pi_{n+1}$ relative to $\mathbf{0}^{(n+1)}$, set of (indices for) $\mathbf{0}^{(n+1)}$-computable $\Pi_{n+1}$ formulas. Thus $\chi$ is equivalent to a $\mathbf{0}^{(n+1)}$-computable $\Pi_{n+1}$ formula.
\end{proof}

\section{Unfriendly jump inversions}

We begin by describing the related methods of jump inversion and Marker extensions as they are generally used. We follow, for example, Chapter X.3 of \cite{MonBook1}. Given $n$, let $\mc{A}$ and $\mc{B}$ be computable structures such that $\mc{A} \equiv_n \mc{B}$ but $\mc{A} \nleq_{n+1} \mc{B}$ and $\mc{B} \nleq_{n+1} \mc{A}$. Suppose further that $\mc{A}$ and $\mc{B}$ have $\Pi_{n+2}$ Scott sentences and that $\mc{A}$ and $\mc{B}$ are $(n+1)$-friendly. By Theorem \ref{thm:ash-knight-sep} (which is Theorem 15.2 of \cite{AshKnight}) there are computable $\Pi_{n+1}$ sentences satisfied by $\mc{A}$ but not by $\mc{B}$ and by $\mc{B}$ but not by $\mc{A}$. $\mc{A}$ and $\mc{B}$ also have computable $\Pi_{n+2}$ Scott sentences and are relatively $\Delta^0_{n+1}$-categorical. Montalb\'an asks in addition that $\mc{A}$ and $\mc{B}$ be rigid.

If we have a structure $\mc{M}$ which we know is isomorphic to either $\mc{A}$ or $\mc{B}$, we can figure out which in a $\Delta^0_{n+1}$ way relative to $\mc{M}$ by asking whether $\mc{M}$ satisfies the computable $\Pi_{n+1}$ sentence true of $\mc{A}$ but not $\mc{B}$, or the computable $\Pi_{n+1}$ sentence true of $\mc{B}$ but not $\mc{A}$. On the other hand, distinguishing between $\mc{A}$ and $\mc{B}$ is $\Delta^0_{n+1}$-hard: By the Ash and Knight pairs-of-structures theorem, in the form of Theorem 18.7 of \cite{AshKnight}, for any $\Delta^0_{n+1}$ set $S$ there is a uniformly computable sequence	of structures $\mc{C}_i$ such that
\[ i \in S \Longleftrightarrow \mc{C}_i \cong \mc{A}\]
and
\[ i \notin S \Longleftrightarrow \mc{C}_i \cong \mc{B}.\]
This also relativizes.

Given a structure, e.g., for the sake of simplicity a graph $\mc{G}$, we can define the $n$th jump inversion $\mc{G}^{(-n)}$ of $\mc{G}$ by replacing each edge in $\mc{G}$ by a copy of $\mc{A}$, and each non-edge in $\mc{G}$ by a copy of $\mc{B}$. For a more general $k$-ary relation, for each $k$-tuple we attach a copy of either $\mc{A}$ or $\mc{B}$ depending on whether the relation holds of the $k$-tuple or not. The idea is that the edge relation of $\mc{G}$ becomes a definable relation in $\mc{G}^{(-n)}$, but the complexity has increased to being $\Delta^0_{n+1}$. There are various different forms of jump inversions, with different properties, but typically one proves something like the following facts.
\begin{theorem}{\ }\label{thm:traditional-jump-inversion}
	If $\mc{A}$ and $\mc{B}$ are rigid, then the $n$th jump of $\mc{G}^{(-n)}$ is effectively bi-interpretable with $\mc{G}$.\footnote{We will not define these terms, but the reader can find them in \cite{MonBook1} and \cite{MonBook2}. In any case, properties (1)-(5) give a sufficient idea of what is going on.} Even if $\mc{A}$ and $\mc{B}$ are not rigid, we have the following:
	\begin{enumerate}
		\item For every $\mathbf{d}$-computable $\Pi_{n+k}$ formula $\varphi$ there is a $\mathbf{d}$-computable $\Pi_k$ formula $\varphi_*$ such that $\mc{G} \models \varphi_*$ if and only if $\mc{G}^{(-n)} \models \varphi$.
		\item For every $\mathbf{d}$-computable $\Pi_{k}$ formula $\varphi$ there is a $\mathbf{d}$-computable $\Pi_{n+k}$ formula $\varphi^*$ such that $\mc{G} \models \varphi$ if and only if $\mc{G}^{(-n)} \models \varphi^*$.
		\item If $\mc{G}$ has a $\Pi_k$ Scott sentence then $\mc{G}^{(-n)}$ has a $\Pi_{n+k}$ Scott sentence.
		\item Given a $\mathbf{d}$-computable copy of $\mc{G}^{(-n)}$ there is a $\mathbf{d}^{(n)}$-computable copy of $\mc{G}$.
		\item Given a $\mathbf{d}^{(n)}$-computable copy of $\mc{G}$ there is a $\mathbf{d}$-computable copy of $\mc{G}^{(-n)}$.
	\end{enumerate}
\end{theorem}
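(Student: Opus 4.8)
The plan is to make the construction of $\mc{G}^{(-n)}$ explicit, extract from the hypotheses the single predicate that does all the work, and then settle (1)--(5) by syntactic translation, isolating one back-and-forth argument for the hard clause. Fix disjoint computable presentations of $\mc{A}$ and $\mc{B}$, and let $\mc{G}^{(-n)}$ have as universe the vertices of $\mc{G}$ together with, for each tuple $\bar{v}$, a fresh disjoint copy of $\mc{A}$ if the coded relation of $\mc{G}$ holds of $\bar{v}$ and a copy of $\mc{B}$ if it does not; add a ``diving'' relation $R(\bar{v},x)$ saying that $x$ lies in the copy attached to $\bar{v}$, and keep the relations of $\mc{A}$ and $\mc{B}$ on each copy. (For a graph: attach $\mc{A}$ along edges and $\mc{B}$ along non-edges.) Since $\mc{A}$ and $\mc{B}$ are $(n+1)$-friendly with $\mc{A} \nleq_{n+1} \mc{B}$ and $\mc{B} \nleq_{n+1} \mc{A}$, Theorem~\ref{thm:ash-knight-sep} supplies computable $\Pi_{n+1}$ sentences $\varphi_{\mc{A}}$ (true in $\mc{A}$, false in $\mc{B}$) and $\varphi_{\mc{B}}$ (true in $\mc{B}$, false in $\mc{A}$). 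Relativized to an attached copy $C$, these are computable $\Pi_{n+1}$ formulas with a parameter naming $C$, and inside $\mc{G}^{(-n)}$ one has ``$C \models \varphi_{\mc{A}}$'' iff $C \cong \mc{A}$ iff ``$C \not\models \varphi_{\mc{B}}$''. Hence the predicate ``$C$ is an $\mc{A}$-copy'' is simultaneously $\Pi_{n+1}$ and $\Sigma_{n+1}$ over $\mc{G}^{(-n)}$, so $\Delta^0_{n+1}$ in any presentation of $\mc{G}^{(-n)}$; this is the fact driving the ``easy'' clauses.

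I would then handle the computability clauses. For (2): translate a sentence about $\mc{G}$ by relativizing every quantifier to the vertices and replacing each atom ``the coded relation holds of $\bar{v}$'' (resp.\ its negation) by the $\Pi_{n+1}$ formula ``the copy attached to $\bar{v}$ satisfies $\varphi_{\mc{A}}$'' (resp.\ ``$\ldots\varphi_{\mc{B}}$''); since a finite Boolean combination of $\Pi_{n+1}$ formulas over disjoint variable blocks is still $\Pi_{n+1}$, substituting these at the quantifier-free bottom of a $\Pi_k$ formula yields a $\Pi_{n+k}$ formula, computable from the same oracle because $\varphi_{\mc{A}},\varphi_{\mc{B}}$ are. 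For (4): from a $\mathbf{d}$-computable copy of $\mc{G}^{(-n)}$, let the universe of $\mc{G}$ be the (low-complexity definable) set of vertices and decide the coded relation by asking whether the copy attached to $\bar{v}$ satisfies $\varphi_{\mc{A}}$; by the $\Delta^0_{n+1}(\mathbf{d})$ fact this is $\mathbf{d}^{(n)}$-computable. For (5): from a $\mathbf{d}^{(n)}$-computable copy of $\mc{G}$ the coded relation is $\Delta^0_{n+1}(\mathbf{d})$, so the lightface pair-of-structures theorem (Theorem 18.7 of \cite{AshKnight}, which uses $(n+1)$-friendliness) relative to $\mathbf{d}$ produces a $\mathbf{d}$-computable sequence of structures, one copy of $\mc{A}$ or $\mc{B}$ per tuple exactly as prescribed; gluing these by $R$ to a computable presentation of the vertex set gives a $\mathbf{d}$-computable copy of $\mc{G}^{(-n)}$.

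The remaining clauses rest on a back-and-forth lemma, which is the crux. By induction on $m$ I would prove: comparing $\mc{G}^{(-n)}$ and $(\mc{G}')^{(-n)}$, but allowing copies to be attached according to \emph{any} assignment of $\mc{A}$'s and $\mc{B}$'s rather than the ``correct'' one, a tuple of $\mc{G}^{(-n)}$ and a tuple of $(\mc{G}')^{(-n)}$ are $m$-back-and-forth related iff their projections to vertices are $(m-n)$-back-and-forth related in $\mc{G}$ and $\mc{G}'$ (for $m \ge n$; otherwise the relation always holds) --- the point being that $\mc{A} \equiv_n \mc{B}$ makes the first $n$ rounds of the game on an attached copy contribute nothing, so that after $n$ rounds one is playing the back-and-forth game on $\mc{G}$. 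Via the standard correspondence between the back-and-forth relations and formula complexity (and Theorem~\ref{thm:internal-upper-bound}), this converts a $\Pi_{n+k}$ sentence true of $\mc{G}^{(-n)}$ into a $\Pi_k$ sentence over $\mc{G}$ with matching truth value, which is (1); only existence of $\varphi_*$ is asserted, which is what lets us disregard that pinning down the $\equiv_n$-invariant facts about a lone copy costs the $\Pi_n$-theory of $\mc{A}$. For (3), conjoin the pushed-up Scott sentence $\psi^*$ of $\mc{G}$ (of complexity $\Pi_{n+k}$) with a $\Pi_{n+2}$ sentence asserting ``I have the shape of a jump inversion, and every attached copy $C$ satisfies $\varphi_{\mc{A}}^{C} \vee \varphi_{\mc{B}}^{C}$ together with $(\varphi_{\mc{A}}^{C} \to \mathrm{Sc}_{\mc{A}}^{C}) \wedge (\varphi_{\mc{B}}^{C} \to \mathrm{Sc}_{\mc{B}}^{C})$'', using the $\Pi_{n+2}$ Scott sentences of $\mc{A}$ and $\mc{B}$ and the collapses $\Sigma_{n+1} \vee \Pi_{n+2} = \Pi_{n+2}$ and $\Pi_{n+1} \vee \Pi_{n+1} = \Pi_{n+1}$; the conjunction is $\Pi_{n+k}$ in the relevant regime $k \ge 2$ (forced whenever $\mc{G}$ has an edge, since $\mc{G}^{(-n)}$ then contains a copy of $\mc{A}$ and so has Scott rank at least $n+1$). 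Finally, if $\mc{A}$ and $\mc{B}$ are rigid then every attached copy is rigid, so an automorphism of $\mc{G}^{(-n)}$ is determined by its action on the vertices; thus $\mc{G}^{(-n)}$ is computably interpretable in $\mc{G}$ via the attaching procedure, and by (1) every $\Sigma^0_n$ relation of $\mc{G}^{(-n)}$ is quantifier-free definable in $\mc{G}$, so $(\mc{G}^{(-n)})^{(n)}$ is computably interpretable in $\mc{G}$; conversely $\mc{G}$ is computably interpretable in $(\mc{G}^{(-n)})^{(n)}$ with the coded relation given by the now-$\Delta^0_1$ predicate ``the attached copy satisfies $\varphi_{\mc{A}}$'', and rigidity makes both round-trip compositions equal to the identity up to a computable isomorphism --- effective bi-interpretability.

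The step I expect to be the main obstacle is the back-and-forth lemma behind (1): one must precisely bound what a $\Pi_{n+k}$ formula of $\mc{G}^{(-n)}$ can learn by diving into attached copies, and check that $\mc{A} \equiv_n \mc{B}$ --- and not some finer equivalence --- absorbs exactly $n$ quantifier alternations, carefully enough that mixed $\mc{A}/\mc{B}$ assignments obey the same collapse, so that the Scott-sentence and bi-interpretability arguments go through. The remaining work is quantifier-complexity bookkeeping, the only real caveat being the implicit $k \ge 2$ hypothesis needed in (3).
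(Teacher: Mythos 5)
The paper does not prove this theorem: it is stated without proof as background on the standard (friendly) jump-inversion construction, with a pointer to Chapter X.3 of \cite{MonBook1} (and the footnote explicitly defers definitions to \cite{MonBook1,MonBook2}). So there is no ``paper's proof'' to compare against; what you have written is a reconstruction from scratch.

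As such, your treatment of clauses (2), (4), (5) is the right argument: (2) is a syntactic substitution that is careful to send atoms and negated atoms to \emph{both-positive} $\Pi_{n+1}$ formulas $\varphi_{\mc{A}}$, $\varphi_{\mc{B}}$ so that no complexity-raising negation appears; (4) is exactly the $\Delta^0_{n+1}(\mathbf{d})$ decoding of the edge relation; (5) is an application of the lightface Ash--Knight pair-of-structures theorem, which is where $(n+1)$-friendliness enters. You also correctly identify the implicit $k\geq 2$ hypothesis in (3) (matching the $\ell \geq 2$ caveat the paper makes explicit in its own Theorem~\ref{thm:jump-inv}).

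The gap is in (1), and you flag it yourself but underestimate it. First, the back-and-forth lemma as stated is not quite right: for tuples meeting the attached copies, being $m$-back-and-forth related is \emph{not} equivalent to the vertex projections being $(m-n)$-related --- one also has to match the intra-copy tuples at level $m$ when the touched copies are forced to correspond, and one has to account for the fact that the same copy can receive $\mc{A}$ in one structure and $\mc{B}$ in the other, where only $n$ rounds of slack are available. Second, and more seriously, even granting the back-and-forth correspondence at the sentence level ($\mc{G}^{(-n)}\leq_{n+k}(\mc{G}')^{(-n)} \Leftrightarrow \mc{G}\leq_k \mc{G}'$), this tells you only that the class $\{\mc{G} : \mc{G}^{(-n)}\models\varphi\}$ is $\leq_k$-upward closed; it does \emph{not} by itself produce a formula $\varphi_*$, let alone a $\mathbf{d}$-computable one uniform in $\mc{G}$. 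Theorem~\ref{thm:internal-upper-bound} won't close this: it yields $\mathbf{0}^{(k)}$-computable formulas defining back-and-forth types of one fixed computable structure, not a translation of an arbitrary $\mathbf{d}$-computable $\Pi_{n+k}$ sentence. The standard route to (1) (and to the effective bi-interpretability claim, which your sketch also leans on via ``by (1) every $\Sigma^0_n$ relation \ldots'') is to first construct the bi-interpretation directly --- interpret $\mc{G}^{(-n)}$ \emph{in} $\mc{G}$ using a canonical copy, show the two compositions are definably isomorphic using rigidity, and then read (1) off from the complexity bookkeeping of the interpretation --- rather than to argue via a Wadge-style invariance of the class. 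As written, (1) and the bi-interpretability statement are incomplete.
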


For a Marker extension we have a similar process, except that for different relations in the language we might use different structures $\mc{A}$ and $\mc{B}$, or we might leave certain relations alone. So, for example, in a language including relations $R_1,R_2,\ldots$ by taking a Marker extension we might make the relation $R_n$ into a $\Delta_{n+1}$-definable relation.

Instead of using friendly structures, for our unfriendly jump inversions we will use structures $\mc{A}$ and $\mc{B}$ that are maximally unfriendly. These structures are given by the following theorem which we will prove later.

\begin{restatable}{theorem}{invstructs}\label{thm:jump-inversion-structures}
	Fix $n$. Let $S \subseteq \mathbb{N}$ be a complete $\Sigma^0_{2n+1}$ set. There are computable structures $\mc{A} \ncong \mc{B}$ and a sequence of computable structures $\mc{C}_i$ such that
	\[ i \in S \Longrightarrow \mc{C}_i \cong \mc{A} \]
	and
	\[ i \notin S \Longrightarrow \mc{C}_i \cong \mc{B}.\]
	Moreover:
	\begin{enumerate}
		\item $\mc{B} \nleq_{n+1} \mc{A}$, which by Theorem \ref{thm:separating-structures} implies that there is a $\mathbf{0}^{(n)}$-computable $\Sigma_{n+1}$ sentence $\varphi$ such that $\mc{A} \models \varphi$ and $\mc{B} \nmodels \varphi$;
		\item $\mc{A}$ and $\mc{B}$ have $\Pi_{n+2}$ Scott sentences.
	\end{enumerate} 
	This is uniform in $n$.
\end{restatable}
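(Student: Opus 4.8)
The plan is to prove the statement by induction on $n$; since each step is a uniform operation, the uniformity in $n$ comes for free. I would handle the base case by a direct construction — this is the one place that needs genuine combinatorial work, and it is close in spirit to the constructions behind Theorem \ref{thm:no-comp-scott-2} and its relatives, repackaged so as to also output the coding sequence $\mc{C}_i$. The inductive step will be an \emph{unfriendly Marker extension}: given level-$n$ structures $\mc{A}_n \ncong \mc{B}_n$ with the stated properties and the $\mathbf{0}^{(n)}$-computable $\Sigma_{n+1}$ sentence $\varphi_n$ from clause (1), I would build $\mc{A}_{n+1}, \mc{B}_{n+1}$ by gluing copies of $\mc{A}_n$ and $\mc{B}_n$ onto a fixed, highly homogeneous skeleton designed so that the isomorphism type of the result is controlled by a single existential feature over the glued components, and obtain $\mc{C}^{n+1}_i$ by the same gluing with the glued components drawn instead from the level-$n$ coding sequence $\mc{C}^n_j$ along a computable schedule.

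The crux — the reason ``unfriendly'' earns its name, and the reason the coding complexity jumps from $\Sigma^0_{2n+1}$ to $\Sigma^0_{2n+3}$ rather than only to $\Sigma^0_{2n+2}$ — is the interplay with Proposition \ref{prop:absorb}. I would arrange $\mc{A}_{n+1}$ to be separated from $\mc{B}_{n+1}$ by a $\Sigma_{n+2}$ sentence $\exists \bar x\,\psi(\bar x)$ where $\psi$ is a $\Pi_{n+1}$ formula that refers to the glued components only through $\varphi_n$. By Proposition \ref{prop:absorb} relativized to $\mathbf{0}^{(n)}$, the complexity of \emph{which} components occur can be absorbed into $\psi$, so $\psi$ becomes a $\mathbf{0}^{(n+1)}$-computable $\Pi_{n+1}$ formula (and, by design, no better). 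Hence $\mc{A}_{n+1}$ still has a $\Pi_{n+3}$ Scott sentence, the only new alternation over the $\Pi_{n+2}$ Scott sentences of $\mc{A}_n, \mc{B}_n$ being the single $\exists\bar x$ together with the (cost-free) description of the skeleton — giving clause (2) — whereas deciding $\mc{C}^{n+1}_i \cong \mc{A}_{n+1}$ is now evaluating $\exists\bar x\,\psi(\bar x)$ on a computable structure, which costs $\exists$ over the $\Pi^0_{2n+2}$ cost of evaluating the $\mathbf{0}^{(n+1)}$-computable $\Pi_{n+1}$ formula $\psi$, i.e.\ $\Sigma^0_{2n+3}$. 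Choosing the computable schedule so that $\{i : \mc{C}^{n+1}_i \cong \mc{A}_{n+1}\}$ realizes an arbitrary $\Sigma^0_{2n+3}$ set and then taking it $\Sigma^0_{2n+3}$-complete finishes the coding; clause (1) at level $n+1$ then follows from Theorem \ref{thm:separating-structures}, since $\mc{B}_{n+1} \nleq_{n+2} \mc{A}_{n+1}$ is exactly the assertion that the $\Sigma_{n+2}$ sentence $\exists\bar x\,\psi(\bar x)$ is true of $\mc{A}_{n+1}$ but not of $\mc{B}_{n+1}$.

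The hard parts will be two. The base case is the first: constructing a single pair of computable structures that is already maximally unfriendly at the lowest level where that is meaningful, together with its coding sequence, requires the unfriendly-jump-inversion idea in its least-scaffolded form, and aligning all four quantities at once — the $\Pi$-Scott rank, the back-and-forth level, the $\Sigma^0$-coding level, and the exact (non-)computability of the separating sentence — is delicate. The second is the skeleton design in the inductive step: it must be homogeneous enough that every $\mc{C}^{n+1}_i$ is isomorphic to $\mc{A}_{n+1}$ or to $\mc{B}_{n+1}$ and to nothing else — no stray isomorphism invariant (component multiplicities, finer gluing patterns) may survive into the isomorphism type — while still leaving enough room to code, and the Scott complexity must genuinely stay at $\Pi_{n+3}$ rather than absorbing, as an extra structural alternation, the ``$\forall$'' that is hiding inside the non-computability of $\psi$. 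Keeping homogeneity and codability in balance, with Proposition \ref{prop:absorb} doing the work of routing the ``$\forall$'' into the formula rather than into the structure, is where the care lies.
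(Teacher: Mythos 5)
Your proposal is essentially the paper's proof, presented with the inductive step unrolled rather than factored. The paper handles the base case $n=1$ by a direct ``bouquet graph'' construction (Lemma~\ref{lem:n1}), very much the Alvir--Csima--Harrison-Trainor flavor you predict. For the inductive step, the paper does not carry out a single fresh Marker extension at each level; instead it \emph{relativizes} Lemma~\ref{lem:n1} to $\mathbf{0}^{(2n)}$ --- giving $\mathbf{0}^{(2n)}$-computable structures $\hat{\mc{A}}, \hat{\mc{B}}, \hat{\mc{C}}_i$ with $\Pi_3$ Scott sentences coding a $\Sigma^0_{2n+3}$-complete set --- and then applies the operation $\Inv{n}{\cdot}$ built from the inductive-hypothesis structures $\mc{A}_n,\mc{B}_n$ and their coding sequence. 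The key ``unfriendly'' fact, Theorem~\ref{thm:jump-inv}(1), is that $\Inv{n}$ takes a $\mathbf{0}^{(2n)}$-computable structure to a \emph{computable} one, precisely because the $\Delta^0_{2n+1}$ edge relation of $\hat{\mc{C}}_i$ can be traded for computable indices into the level-$n$ coding sequence. Unrolling this two-stage step recovers exactly your one-step Marker extension: a fixed skeleton (the domain of the relativized bouquet graph) with $\mc{A}_n$- and $\mc{B}_n$-copies glued on, with $\mc{C}^{n+1}_i$ using components drawn from $(\mc{C}^n_j)$ via a computable schedule. Two places where your sketch is vaguer than the paper's argument: (i) the ``computable schedule'' is, in the paper, the explicit $S$-index bookkeeping (positive Boolean combinations of indices into the $\Sigma^0_{2n+1}$-complete set), which is the technical heart of why the coding works without ever resolving any $\Sigma^0_{2n+1}$ fact; and (ii) Proposition~\ref{prop:absorb} plays a smaller role than you suggest --- the paper uses it in Theorem~\ref{thm:separating-structures} to certify the computability of the separating sentence in clause~(1), while the $\Pi_{n+3}$ Scott bound in clause~(2) comes directly from the jump-inversion translation (Theorem~\ref{thm:jump-inv}(4)), not from an absorption step (and your stated relativization level $\mathbf{0}^{(n)}$ should be $\mathbf{0}^{(n+1)}$). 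Neither of these affects the correctness of the route, and factoring the step through the relativized base case is arguably just a cleaner way to carry out what you describe.
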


Given the theorem for some particular value of $n$, we can use these structures $\mc{A}$ and $\mc{B}$ to define a particular jump inversion operation which we will use for the construction of a structure with non-arithmetic degree spectrum. This jump inversion operation will be described in full detail in Section \ref{sec:jump-inversion}, where we also prove the desired properties (as the analogue of Theorem \ref{thm:traditional-jump-inversion}). Montalb\'an notes that even if two computable structures are not $n$-friendly, they become $n$-friendly relative to $\mathbf{0}^{(2n)}$. Thus we get different properties depending on what degree we are working relative to. The key difference from the standard friendly jump inversions is:
\begin{enumerate}
 \item Given a $\mathbf{0}^{(2n)}$-computable copy of $\mc{G}$, there is a computable copy of $\Inv{n}{\mc{G}}$.
\item Given a $\mathbf{d}$-computable copy of $\Inv{n}{\mc{G}}$ for $\mathbf{d} \geq \mathbf{0}^{(n)}$, there is a $\mathbf{d}^{(n)}$-computable copy of $\mc{G}$.
\end{enumerate}
These might seem contradictory, but (1) does not relativize and (2) is only true for $\mathbf{d} \geq \mathbf{0}^{(n)}$. These imply, for example, that if $\mc{G}^{(-n)}$ has a $\mathbf{0}^{(n)}$-computable copy then it has a computable copy. This is usually not true for jump inversions. 

The proof of Theorem \ref{thm:jump-inversion-structures} will be an inductive argument which we give in Section \ref{sec:inductive}. We begin by proving the case $n = 1$ as Lemma \ref{lem:n1}. To prove the case $n+1$, we first use Lemma \ref{lem:n1} relativized to $\mathbf{0}^{(2n)}$, and then us the jump inversion operation obtained from the inductive hypothesis, which is the case $n$ of Theorem \ref{thm:jump-inversion-structures}, to obtain Theorem \ref{thm:jump-inversion-structures} for $n+1$.

\subsection{Construction and properties of unfriendly jump inversions}\label{sec:jump-inversion}

Suppose that we are given Theorem \ref{thm:jump-inversion-structures} for some value of $n$. Let $S$ be a complete $\Sigma^0_{2n+1}$ set, and fix computable structures $\mc{A} \ncong \mc{B}$ with $\Pi_{n+2}$ Scott sentences, a computable sequence of structures $\mc{C}_i$, and a $\mathbf{0}^{(n)}$-computable $\Sigma_{n+1}$ sentence $\varphi$ as in Theorem \ref{thm:jump-inversion-structures}.

Consider a graph $\mc{G}$ (or, by simple modifications, a structure in any relational language). Define $\Inv{n}{\mc{G}}$ to be the structure consisting of the vertices of $\mc{G}$, and associated to each pair $(u,v)$ of vertices we have an ordered pair of structures isomorphic to $(\mc{A},\mc{B})$ if there is an edge $u - v$, and isomorphic to $(\mc{B},\mc{A})$ if there is no edge $u - v$. More formally, let $\mc{L}$ be the language of $\mc{A}$ and $\mc{B}$. We work in the language $\mc{L}^* = \mc{L} \cup \{U,E,F\}$ where $U$ is a unary relation and $E$ and $F$ are ternary relations. $\Inv{n}{\mc{G}}$ will be the structure defined as follows. The domain of $\Inv{n}{\mc{G}}$ will consist of the elements of $\mc{G}$ together with, for each $u,v \in \mc{G}$, disjoint infinite sets $C_{u,v}$ and $D_{u,v}$. $U$ will hold only of the elements of $\mc{G}$. We put $E(u,v,x)$ if and only if $x \in C_{u,v}$ and $F(u,v,x)$ if and only if $x \in D_{u,v}$. The relations of $\mc{L}$ will hold only of tuples of elements all in the same set $C_{u,v}$ or $D_{u,v}$, and we will have $C_{u,v} \cong \mc{A}$ and $D_{u,v} \cong \mc{B}$ if there is an edge $u - v$, and $C_{u,v} \cong \mc{B}$ and $D_{u,v} \cong \mc{A}$ if there is no edge $u-v$.

Note that $\mc{G}$ is interpretable in $\Inv{n}{\mc{G}}$ using computable $\Delta_{2n+1}$ formulas (in the sense of $\mc{L}_{\omega_1 \omega}$ interpretations as introduced in \cite{HTMM} and \cite{HTM}; see Section VII.8 of \cite{MonBook2}). Namely, the domain of $\mc{G}$ in $\Inv{n}{\mc{G}}$ is defined by $U$, and we define the edge relation on pairs $u,v \in U$ using computable $\Delta_{2n+1}$ formulas, as
\[ u E v \Longleftrightarrow E(u,v,\cdot) \models \varphi \Longleftrightarrow F(u,v,\cdot) \models \neg \varphi.\]
Here, by $E(u,v,\cdot) \models \varphi$ we mean that the $\mc{L}$-structure on the set $\{x : E(u,v,x)\}$ is a model of $\varphi$. Relative to $\mathbf{0}^{(n)}$, these definitions become $\Delta_{n+1}$. Note that this interpretation of $\mc{G}$ in $\Inv{n}{\mc{G}}$ is uniform in $\mc{G}$. Hence $\mc{G} \cong \mc{H}$ if and only if $\Inv{n}{\mc{G}} \cong \Inv{n}{\mc{H}}$.

We prove several properties of this jump inversion. Some of them, like (3), (4), and (5), are properties that any form of jump inversion should satisfy. (1) and (2) are special properties of unfriendly jump inversions.

\begin{theorem}\label{thm:jump-inv}{\ }
	\begin{enumerate}
		\item Given a $\mathbf{0}^{(2n)}$-computable copy of $\mc{G}$, there is a computable copy of $\Inv{n}{\mc{G}}$.
		\item Given a $\mathbf{0}^{(n)}$-computable copy of $\Inv{n}{\mc{G}}$, there is a $\mathbf{0}^{(2n)}$-computable copy of $\mc{G}$.
		\item Given a $\Sigma_\ell$ sentence $\varphi$, there is a $\Sigma_{n+\ell}$ sentence $\varphi^*$ such that for all $\mc{G}$, $\mc{G} \models \varphi$ if and only if $\Inv{n}{\mc{G}} \models \varphi^*$.
		\item If $\mc{G}$ has a $\Pi_\ell$ Scott sentence for $\ell \geq 2$, then $\Inv{n}{\mc{G}}$ has a $\Pi_{n+\ell}$ Scott sentence.
		\item If $\mc{G}$ has a $\Sigma_\ell$ Scott sentence for $\ell \geq 3$, then $\Inv{n}{\mc{G}}$ has a $\Sigma_{n+\ell}$ Scott sentence.
	\end{enumerate}
	Moreover, (1) and (2) are uniform.
\end{theorem}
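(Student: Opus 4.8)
The plan is to split the five parts into two groups. Parts (1)--(2) are transfer-of-copies statements driven by the $\Sigma^0_{2n+1}$-completeness of $S$ and the separating sentence $\varphi$ from Theorem~\ref{thm:jump-inversion-structures}. Parts (3)--(5) are pullback statements for the $\mc{L}_{\omega_1\omega}$-interpretation $\Gamma$ of $\mc{G}$ in $\Inv{n}{\mc{G}}$ exhibited above, whose crucial feature is that the edge relation of $\mc{G}$ is \emph{$\Delta_{n+1}$-definable} in $\Inv{n}{\mc{G}}$: ``$uEv$'' is defined by the $\Sigma_{n+1}$ formula $E(u,v,\cdot)\models\varphi$ and equivalently by the $\Pi_{n+1}$ formula $F(u,v,\cdot)\models\neg\varphi$, using that $\varphi$ is $\Sigma_{n+1}$ and holds in $\mc{A}$ but not in $\mc{B}$ (and that relativizing a $\Sigma_{n+1}$ formula to the quantifier-free-definable fiber $\{x : E(u,v,x)\}$ leaves it $\Sigma_{n+1}$). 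It is this $\Delta_{n+1}$-definability, and not the weaker lightface $\Delta_{2n+1}$-definability, that will make $\Gamma$ raise infinitary complexity by exactly $n$.

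For (1): given a $\mathbf{0}^{(2n)}$-computable copy of $\mc{G}$, the predicates ``there is an edge $u-v$'' and ``there is no edge $u-v$'' are each $\Sigma^0_{2n+1}$, uniformly in an index for the copy, so by completeness of $S$ there are computable $f,g$ with ``edge $u-v$''$\Leftrightarrow f(u,v)\in S$ and ``non-edge $u-v$''$\Leftrightarrow g(u,v)\in S$. Taking $C_{u,v}=\mc{C}_{f(u,v)}$, $D_{u,v}=\mc{C}_{g(u,v)}$, and making $U$, $E$, $F$ and the domain computable in the evident way yields a computable copy of $\Inv{n}{\mc{G}}$, uniformly, by the property of the sequence $\mc{C}_i$. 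For (2): given a $\mathbf{0}^{(n)}$-computable copy of $\Inv{n}{\mc{G}}$, the set $U$ and each fiber with its $\mc{L}$-structure is $\mathbf{0}^{(n)}$-computable uniformly, so ``$E(u,v,\cdot)\models\varphi$'' is $\Sigma^0_{2n+1}$ while the equivalent ``$F(u,v,\cdot)\models\neg\varphi$'' is $\Pi^0_{2n+1}$; hence the edge relation is $\Delta^0_{2n+1}$, and presenting $\mc{G}$ on $U$ with this edge relation gives a $\mathbf{0}^{(2n)}$-computable copy, uniformly.

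For (3): pull a $\Sigma_\ell$ graph sentence through $\Gamma$, relativizing each graph quantifier to the quantifier-free set $U$ and replacing every (negated) edge atom by its $\Delta_{n+1}$-definition; since $\Sigma_\alpha$ and $\Pi_\alpha$ are closed under finite conjunctions and disjunctions for $\alpha\ge1$, a short induction on $\ell\ge1$ shows that the pullback of a $\Sigma_\ell$ (resp.\ $\Pi_\ell$) sentence is $\Sigma_{n+\ell}$ (resp.\ $\Pi_{n+\ell}$), and the equivalence $\mc{G}\models\varphi\Leftrightarrow\Inv{n}{\mc{G}}\models\varphi^*$ is just correctness of $\Gamma$. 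For (4): take $\varphi^*=\chi_{\mathrm{frame}}\wedge\sigma_{\mc{G}}^{\Gamma}$, where $\sigma_{\mc{G}}^{\Gamma}$ is the $\Pi_{n+\ell}$ pullback of a $\Pi_\ell$ Scott sentence of $\mc{G}$ and $\chi_{\mathrm{frame}}$ is a conjunction of bounded-complexity frame axioms ($U,E,F$ interact correctly, every non-$U$ element lies in exactly one fiber, $\mc{L}$-relations live inside single fibers) together with, for each $u,v\in U$, the $\Pi_{n+2}$ axiom that one fiber of the pair satisfies the $\Pi_{n+2}$ Scott sentence of $\mc{A}$ and the other that of $\mc{B}$ (which, as $\mc{A}\ncong\mc{B}$, forces exactly one copy of each). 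Then $\chi_{\mathrm{frame}}$ is $\Pi_{n+2}$ and, since $\ell\ge2$, $\varphi^*$ is $\Pi_{n+\ell}$; any countable model $\mc{M}$ of $\varphi^*$ has interpreted graph $\Gamma(\mc{M})\cong\mc{G}$ and its fiber data forced to match, so $\mc{M}\cong\Inv{n}{\Gamma(\mc{M})}\cong\Inv{n}{\mc{G}}$.

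For (5): by the standard characterization of $\Sigma_\ell$ Scott sentences (Montalb\'an \cite{MonBook1,MonSR}), $\mc{G}$ having a $\Sigma_\ell$ Scott sentence is equivalent to the existence of a tuple $\bar a\in\mc{G}$ with $(\mc{G},\bar a)$ having a $\Pi_{\ell-1}$ Scott sentence, and $\ell\ge3$ gives $\ell-1\ge2$. Naming $\bar a$ commutes with the construction, i.e.\ $(\Inv{n}{\mc{G}},\bar a)\cong\Inv{n}{(\mc{G},\bar a)}$ (the construction and parts (3)--(4) apply equally to graphs with finitely many distinguished constants), so part (4) furnishes a $\Pi_{n+\ell-1}$ Scott sentence $\psi(\bar x)$ of $(\Inv{n}{\mc{G}},\bar a)$; then $\exists\bar x\,\psi(\bar x)$ is a $\Sigma_{n+\ell}$ sentence, and it is a Scott sentence of $\Inv{n}{\mc{G}}$ since any countable model with a witness $\bar b$ has $(\mc{M},\bar b)\cong(\Inv{n}{\mc{G}},\bar a)$. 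Uniformity in (1)--(2) is visible from the constructions. The step I expect to be the main obstacle is the complexity bookkeeping in (3)--(4): securing the exact ``$+n$'' increase---which is precisely why $\varphi$ must be the $\Sigma_{n+1}$ sentence and not merely the computable $\Sigma_{2n+1}$ one, and why both the $E$-side and $F$-side definitions of the edge relation are needed---while keeping $\chi_{\mathrm{frame}}$ at level $\Pi_{n+\ell}$ and confirming that it genuinely axiomatizes the frame.
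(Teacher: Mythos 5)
Your proofs of (1)--(4) are correct and follow essentially the same route as the paper: for (1) both $S$-reductions $f,g$ are used to pick out the pair $(\mc{C}_{f(u,v)},\mc{C}_{g(u,v)})$; for (2) the $\mathbf{0}^{(n)}$-computable $\Sigma_{n+1}$ separating sentence $\varphi$ makes the edge relation $\Delta^0_{n+1}$ over $\mathbf{0}^{(n)}$, hence $\mathbf{0}^{(2n)}$-computable; for (3) and (4) one pulls a sentence back through the interpretation by relativizing to $U$ and replacing $uEv$ by the $\Sigma_{n+1}$ definition $E(u,v,\cdot)\models\varphi$ or the $\Pi_{n+1}$ definition $F(u,v,\cdot)\models\neg\varphi$, and conjoins $\Pi_{n+2}$ frame axioms (the last using the $\Pi_{n+2}$ Scott sentences of $\mc{A},\mc{B}$). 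Your identification of the $\Delta_{n+1}$-definability of the edge relation as the key point that buys exactly ``$+n$'' is exactly the driving observation.

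For (5) you take a genuinely different route. The paper simply reuses the sentence from (4) with conjunct (c) now the $\Sigma_{n+\ell}$ pullback of the $\Sigma_\ell$ Scott sentence, observing that since $\ell\ge3$ the $\Pi_{n+2}$ frame axioms are subsumed into $\Sigma_{n+\ell}$, giving a $\Sigma_{n+\ell}$ sentence. You instead invoke the characterization that a structure has a $\Sigma_\ell$ Scott sentence iff some tuple $\bar a$ makes $(\mc{G},\bar a)$ have a $\Pi_{\ell-1}$ Scott sentence, reduce to (4) over parameters (using that $\Inv{n}{\cdot}$ commutes with naming constants from $U$), and existentially quantify. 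Both arguments are correct, and yours nicely makes transparent where $\ell\ge3$ enters (to get $\ell-1\ge2$). Two small notes: the $\Sigma_\ell\leftrightarrow\Pi_{\ell-1}$-over-a-tuple characterization is a real theorem, but it is not actually stated in \cite{MonSR} or \cite{MonBook1} as cited; the paper's more direct argument avoids needing it. Also, since the paper's Scott sentence already lives at level $\Pi_{n+2}\subseteq\Sigma_{n+\ell}$ in its frame part, your extra machinery is not needed, though it is a clean alternative.
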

\begin{proof}
	
		For (1): Given a $\mathbf{0}^{(2n)}$-computable copy of $\mc{G}$, the edge relation of $\mc{G}$ is both $\Sigma^0_{2n+1}$ and $\Pi^0_{2n+1}$. Thus there computable functions $f$ and $g$ such that
	\[ u E v \Longleftrightarrow f(u,v) \in S \]
	and
	\[ u E v \Longleftrightarrow g(u,v) \notin S.\]
	Then we can build a computable copy of $\Inv{n}{\mc{G}}$: given $u$ and $v$, associate to them the pair of structures $(\mc{C}_{f(u,v)},\mc{C}_{g(u,v)})$. Then
	\[ u E v \Longleftrightarrow \mc{C}_{f(u,v)} \cong \mc{A} \text{ and } \mc{C}_{g(u,v)} \cong \mc{B}\]
	and
	\[ u \not E v \Longleftrightarrow \mc{C}_{f(u,v)} \cong \mc{B} \text{ and } \mc{C}_{g(u,v)} \cong \mc{A}.\]
	
	\medskip
	
	For (2): Given a $\mathbf{0}^{(n)}$-computable copy of $\Inv{n}{\mc{G}}$ we have the $\mathbf{0}^{(n)}$-computable $\Sigma_{n+1}$ sentence $\varphi$ such that $\mc{A} \models \varphi$ and $\mc{B} \nmodels \varphi$. Given vertices $u$ and $v$, it is $\Delta^0_{n+1}$ relative to $\mathbf{0}^{(n)}$ to decide whether they have associated to them a copy of $(\mc{A},\mc{B})$ or $(\mc{B},\mc{A})$. $\Delta^0_{n+1}$ relative to $\mathbf{0}^{(n)}$ is $\mathbf{0}^{(2n)}$-computable, and so we can use $\mathbf{0}^{(2n)}$ to compute a copy of $\mc{G}$.
	
	\medskip
	
	For (3): Given a $\Sigma_\ell$ sentence $\varphi$, we can create a new sentence $\varphi^*$ by replacing every instance of $u E v$ with the  $\Sigma_{n+1}$ formula $E(u,v,\cdot)\models \varphi$ or the $\Pi_{n+1}$ formula $F(u,v,\cdot) \models \neg \varphi$, and every instance of $u \not E v$ replaced by the $\Sigma_{n+1}$ formula $F(u,v,\cdot)\models \varphi$ or the $\Pi_{n+1}$ formula $E(u,v,\cdot) \models \neg \varphi$.  Thus $\varphi^*$ is a $\Sigma_{n+\ell}$ sentence, and $\mc{G} \models \varphi$ if and only if $\Inv{n}{\mc{G}} \models \varphi^*$.
	
	\medskip
	
	For (4): We can write a $\Pi_{n+\ell}$ Scott sentence for $\Inv{n}{\mc{G}}$ as follows:
	\begin{enumerate}[label=(\alph*)]
		\item The following $\Pi_2$ sentences:
		\begin{enumerate}[label=(\roman*)]
			\item If $E(u,v,x)$ or $F(u,v,x)$ then $u \neq v$, $u,v \in U$, and $x \notin U$.
			\item $E(u,v,x)$ if and only if $E(v,u,x)$ and $F(u,v,x)$ if and only if $F(v,u,x)$.
			\item For all $x \notin U$, there are $u$ and $v$ such that $E(u,v,x)$ or $F(u,v,x)$.
			\item If $E(u,v,x)$ (or $F(u,v,x)$) and $E(u',v',x)$ (or $F(u',v',x)$) then $\{u,v\} = \{u',v'\}$.
		\end{enumerate}
		\item For all $u,v \in U$, either $E(u,v,\cdot) \cong \mc{A}$ and $F(u,v,\cdot) \cong \mc{B}$, or alternatively $E(u,v,\cdot) \cong \mc{B}$ and $E(u,v,\cdot) \cong \mc{A}$. Since $\mc{A}$ and $\mc{B}$ have $\Pi_{n+2}$ Scott sentences, this is $\Pi_{n+2}$.
		\item The $\Pi_{\ell}$ Scott sentence of $\mc{G}$, but with quantifiers restricted to $U$ and every instance of $u E v$ replaced by the $\Sigma_{n+1}$ formula $E(u,v,\cdot)\models \varphi$ or the $\Pi_{n+1}$ formula $F(u,v,\cdot) \models \neg \varphi$, and every instance of $u \not E v$ replaced by $\Sigma_{n+1}$ formula $F(u,v,\cdot)\models \varphi$ or the $\Pi_{n+1}$ formula $E(u,v,\cdot) \models \neg \varphi$. This is $\Pi_{\ell+n}$.
	\end{enumerate}
	Here (a) and (b) imply that the structure is $\Inv{n}{\mc{H}}$ for some $\mc{H}$, and (c) expresses that $\mc{H} \cong \mc{G}$.
	
	\medskip
	
	For (5): Similar to (4), but (c) is now $\Sigma_{\ell+n}$.
\end{proof}

\subsection{Constructing the required structures}\label{sec:inductive}

We begin by proving as a lemma the case $n=1$ of Theorem \ref{thm:jump-inversion-structures}.

\begin{lemma}\label{lem:n1}
	Let $S \subseteq \mathbb{N}$ be a complete $\Sigma^0_{3}$ set. There are computable structures $\mc{A} \ncong \mc{B}$ and a sequence of computable structures $\mc{C}_i$ such that
	\[ i \in S \Longrightarrow \mc{C}_i \cong \mc{A} \]
	and
	\[ i \notin S \Longrightarrow \mc{C}_i \cong \mc{B}.\]
	Moreover:
	\begin{enumerate}
		\item $\mc{B} \nleq_2 \mc{A}$ and so there is a $\mathbf{0}'$-computable $\Sigma_{2}$ sentence $\varphi$ such that $\mc{A} \models \varphi$ and $\mc{B} \nmodels \varphi$;
		\item $\mc{A}$ and $\mc{B}$ have $\Pi_{3}$ Scott sentences.
	\end{enumerate} 
\end{lemma}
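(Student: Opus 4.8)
The plan is to build $\mc{A}$ and $\mc{B}$ as structures that are ``$\Sigma_2$-separated but $\leq_2$-indistinguishable from the $\mc{B}$ side,'' using a standard coding apparatus: a tree or a family of ``boxes'' each of which is a computable structure whose isomorphism type depends on whether a certain $\Sigma^0_3$ event occurs. Concretely, I would use that a complete $\Sigma^0_3$ set $S$ can be written as $i \in S \iff \exists a\,\forall b\,\exists c\; R(i,a,b,c)$ with $R$ computable, or equivalently as a $\Sigma^0_3$ condition ``$\exists a$ ($W^{i,a}$ is infinite / cofinite)'' and so on. The structure $\mc{C}_i$ will be a disjoint union over witnesses $a$ of a ``gadget'' $G_{i,a}$ that is isomorphic to a fixed structure $\mc{P}$ (a ``positive'' type) if the $\Pi^0_2$ inner condition $\forall b\,\exists c\,R(i,a,b,c)$ holds, and isomorphic to a fixed structure $\mc{Q}$ (a ``negative'' type) otherwise; here the $\Pi^0_2$-vs-$\Sigma^0_2$ distinction between $\mc{P}$ and $\mc{Q}$ is realized by the familiar ``one infinite component versus all finite components'' trick (so that having a $\mc{P}$-gadget is a $\Sigma_2$ fact, detectable by $\mathbf{0}'$). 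Then $\mc{A}$ is the type ``at least one gadget is a $\mc{P}$'' and $\mc{B}$ is the type ``all gadgets are $\mc{Q}$''; the $\mathbf{0}'$-computable $\Sigma_2$ sentence $\varphi$ asserts ``some gadget is a $\mc{P}$,'' so $\mc{A}\models\varphi$, $\mc{B}\not\models\varphi$, giving $\mc{B}\nleq_2\mc{A}$ and part (1).

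First I would pin down the exact form of the gadgets so that both $\mc{A}$ and $\mc{B}$ genuinely have $\Pi_3$ Scott sentences — this needs the gadget structures $\mc{P},\mc{Q}$ themselves to have low-complexity Scott sentences ($\Pi_3$ or better) and to have a clean automorphism-orbit analysis, so that the ``canonical Scott sentence'' built from the back-and-forth structure of the whole disjoint union collapses to $\Pi_3$. Second I would verify the coding: $i \in S \Rightarrow \mc{C}_i \cong \mc{A}$ requires that when $i\in S$ some inner $\Pi^0_2$ condition holds, so some gadget is genuinely a $\mc{P}$ and all others are $\mc{P}$ or $\mc{Q}$; I should arrange (by padding with infinitely many ``dummy'' gadgets that are always $\mc{Q}$, and infinitely many copies of each actual gadget) that the resulting structure is isomorphic to the single fixed structure $\mc{A}$ regardless of which and how many gadgets came out positive. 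Symmetrically $i\notin S \Rightarrow$ every gadget is a $\mc{Q}$, so $\mc{C}_i\cong\mc{B}$. The uniformity and computability of $i\mapsto\mc{C}_i$ is then routine from the computable presentation of $R$.

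The main obstacle I expect is the delicate calibration in part (2): making sure the Scott sentences of $\mc{A}$ and $\mc{B}$ come in at exactly $\Pi_3$ and not $\Pi_4$. The danger is that ``all gadgets are $\mc{Q}$'' (the Scott sentence of $\mc{B}$) naively reads as $\forall$(gadget) ($\exists$-description-of-$\mc{Q}$ and $\forall$-description-of-$\mc{Q}$), and if $\mc{Q}$ needs a $\Pi_2$ or $\Sigma_2$ piece this pushes to $\Pi_3$, which is fine, but the ``$\mc{A}$'' side additionally has to say ``there exists a $\mc{P}$-gadget \emph{and} every gadget is a $\mc{P}$ or a $\mc{Q}$,'' and the $\mc{P}$-gadget must not itself require a $\Pi_3$-complete subsentence or the whole thing becomes $\Pi_4$. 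So I would choose $\mc{P}$ and $\mc{Q}$ to be as simple as possible — e.g. built from components that are single points, edges, and one distinguished ``infinite clique / infinite independent set'' marker — specifically so that ``is a $\mc{P}$'' is $\Sigma_2$, ``is a $\mc{Q}$'' is $\Pi_2$, and ``is a $\mc{P}$ or a $\mc{Q}$'' is $\Pi_2$; then a careful but mechanical count shows both Scott sentences are $\Pi_3$. A secondary subtlety is confirming $\mc{B}\nleq_2\mc{A}$ rather than merely $\mc{A}\neq\mc{B}$: this is exactly the existence of the $\Sigma_2$ sentence $\varphi$ true of $\mc{A}$ but not $\mc{B}$, which follows from the above design and from the characterization that $\mc{B}\leq_2\mc{A}$ iff every $\Sigma_2$ sentence true of $\mc{A}$ is true of $\mc{B}$.
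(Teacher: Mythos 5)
Your high-level plan — decompose $S$ as $\exists$~(inner $\Pi^0_2$ condition), build ``positive'' and ``negative'' witness structures, let $\mc{A}$ contain a positive witness and $\mc{B}$ not, and separate them with the sentence ``$\exists$~a positive witness'' — is the right shape and is the same strategy the paper uses. But the specific gadget you propose does not hit the required complexity level, and this is a genuine gap rather than a calibration detail to be worked out later. If you realize the $\Pi^0_2$-vs-$\Sigma^0_2$ distinction inside a gadget by ``one infinite component versus all finite components,'' then ``$z$'s gadget contains an infinite component'' is an infinitary $\Pi_2$ property of $z$ (for every $n$, some $n$ elements connected to some fixed $w$), not a $\Pi_1$ one, and so ``$\exists z$~($z$'s gadget is $\mc{P}$)'' is a $\Sigma_3$ sentence, not $\Sigma_2$. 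With that design you would only get $\mc{B}\nleq_3\mc{A}$, and since in fact $\mc{Q}\leq_2\mc{P}$ (every tuple in the infinite-component structure $\mc{P}$ is $\leq_1$-matched by a tuple in a long-enough finite component of $\mc{Q}$), you would actually have $\mc{B}\leq_2\mc{A}$, contradicting part (1). For the same reason the orbit of a $\mc{Q}$-gadget root inside $\mc{A}$ would be definable only by a $\Pi_2$ formula and not by any $\Sigma_2$ formula, so by Montalb\'an's characterization $\mc{A}$ would have Scott rank giving a $\Pi_4$ Scott sentence, not $\Pi_3$, and your worry about ``infinitely many $\mc{Q}$-gadgets'' pushing the sentence to $\Pi_4$ is exactly realized.

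The paper's construction avoids this by using the opposite polarity. It builds bouquet graphs where each element carries a c.e.\ set of labels, a ``global'' label set $L$ is put on \emph{every} element as the construction proceeds, and the distinguished element $a$ of $\mc{A}$ ends up with label set \emph{exactly} $L$ while every other unkilled element carries $L$ plus some extra index-specific labels (and killed elements carry all labels). The separating sentence is then $\varphi = \exists x \bigdoublewedge_{\ell\in N}\neg\ell(x)$ where $N$ is the complement of $L$: ``there is an element with no forbidden label.'' Since each $\neg\ell(x)$ is $\Pi_1$, the conjunction is $\Pi_1$ and $\varphi$ is genuinely $\Sigma_2$; correspondingly each orbit of $\mc{A}$ and $\mc{B}$ is cut out by a $\Sigma_1$, $\Pi_1$, or $\Delta_2$ formula, so the Scott sentences land at $\Pi_3$. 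The ``failed approximations'' to $a$ (the $\hat c_{i,j,k}$ elements created and then killed as $W^{i,j}$ grows) are absorbed into $\mc{B}$ itself so that $\mc{A}$ differs from $\mc{B}$ only by the one orbit of $a$ and so that $\mc{C}_i\cong\mc{B}$ exactly when every $W^{i,j}$ is finite. To repair your argument you would need to replace the infinite-component gadget with something whose ``positive'' type is picked out by a $\Pi_1$ property of a witness element — i.e.\ by the \emph{absence} of certain atomic facts, not by an unbounded existential count — which is precisely the labeled-bouquet mechanism.
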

\begin{proof}
	These structures $\mc{A}$ and $\mc{B}$ will be ``bouquet graphs''. A bouquet graph consists of a number of different connected components (``flowers'') with each connected component having a central vertex and then some number of loops attached to the central vertex. If there is a loop of length, say, $n+3$, then we think of the component as having been labeled with the number $n$. Thus each flower corresponds to some subset of $\mathbb{N}$ consisting of the labels it has received, and the graph corresponds to a collection of subsets of $\mathbb{N}$. We can think of the labels as being c.e.\ since as the graph is being constructed we can always add new loops/labels, but once a label is added it cannot be removed. It will be more convenient to us to think of the structures as consisting of infinitely many points which are labeled with c.e.\ labels coming from the set of labels $\ell_n$. Putting a label $\ell_n$ on a element means to add a loop of length $n+3$ to the flower whose center is that element. 
	
	To make $\mc{A}$ and $\mc{B}$ have $\Pi_3$ Scott sentences we must ensure that each automorphism orbit is definable by a $\Sigma_2$ formula, which we will accomplish by making sure that for each point $x$ there is a finite set $P_x$ of labels and a possibly infinite set $N_x$ of labels such that whenever any $y$ satisfies all of the labels in $P_x$ and none of the labels in $N_x$, then $y$ satisfies the same set of labels as $x$ (and hence they are in the same automorphism orbit). We also need $\mc{B} \nleq_2 \mc{A}$, and so we need a $\Sigma_2$ sentence $\varphi$ true of $\mc{A}$ but not of $\mc{B}$. To this end, we will make sure that $\mc{A}$ has an element $a$ such that every element $b \in \mc{B}$ has a label that $a$ does not have.
	
	We reserve a special label $\lkill$, which we call a killing label. Whenever any element has the label $\lkill$, it will also have all other labels. During the construction we may say that we kill an element; by this, we mean that we put the label $\lkill$ on that element, and also every other label.
	
	We build, stage-by-stage, computable structures $\mc{B}$, $\mc{A}$, and $\mc{C}_i$ for $i \in \omega$ with each $\mc{C}_i$ satisfying
	\[ i \in S \Longrightarrow \mc{C}_i \cong \mc{A} \]
	and
	\[ i \notin S \Longrightarrow \mc{C}_i \cong \mc{B}.\]
	Let $(W^{i,k})_{i,k \in \mathbb{N}}$ be a uniform list of c.e.\ sets such that
	\[ i \in S \Longleftrightarrow \exists j \;\; \text{$W^{i,j}$ is infinite}\]
	and such that, when $i \in S$, there is a unique such $j$.
	As we describe the elements of these structures $\mc{B}$, $\mc{A}$, and $\mc{C}_i$, each element will have infinitely many copies with the same labels as it; we will generally not mention this explicitly in the construction, but when we add some new element, we add infinitely many copies of it, and when we add a new label to an element, we add that label to all copies of that element.
		
	$\mc{A}$ will consist of $\mc{B}$ together with an additional element $a$ (and hence infinitely many copies of $a$). Each $\mc{C}_i$ will consist of $\mc{B}$ together with infinitely many new elements $c_j$ (and of course infinitely many copies of these). Whenever we add a new element to $\mc{B}$, we implicitly also add it to $\mc{A}$ and the $\mc{C}_i$. We will never add any further new elements to $\mc{A}$ or to some $\mc{C}_i$ without adding it to $\mc{B}$, though we may add new labels.	
	
	\medskip
	
	\noindent \textit{Construction.}
	
	\smallskip
	
	\noindent \textit{Stage 0.} $\mc{B}$ consists of infinitely many killed elements. For each $i,j \in \omega$, $\mc{B}$ will also have an element $\hat{c}_{i,j,0}$. $\mc{A}$ will contain all of the elements of $\mc{B}$ together with a distinguished element $a$ which begins with no labels. For each $i$, $\mc{C}_{i}$ will contain all of the elements of $\mc{B}$ together with an additional element $c_{i,j}$ for each $j$, with labels $\ell_{i,j,k}$ for $k \in \omega$.
	
	\smallskip
	
	\noindent \textit{Stage $s+1$.} For each $i$ and $j$ with $|W^{i,j}_{s+1}| > |W^{i,j}_s|$, letting $k = |W^{i,j}_s|$, do the following:
	\begin{enumerate}
		\item Put the label $\ell_{i,j,k}$ on all elements of $\mc{A}$, $\mc{B}$, and $\mc{C}_j$.
		\item Kill $\hat{c}_{i,j,k}$ (which is in $\mc{B}$ and hence also in $\mc{A}$ and the $\mc{C}_j$).
		\item Create a new element $\hat{c}_{i,j,k+1}$ (in $\mc{B}$, and hence also in $\mc{A}$ and the $\mc{C}_j$). Give $\hat{c}_{i,j,k+1}$ the same labels as $c_{i,j}$ has at this stage.
	\end{enumerate} 
	
	\smallskip
	
	\noindent \textit{End construction.}
	
	\medskip
	
	\noindent It is not hard to see that $\mc{B}$ will consist of the following elements:
	\begin{itemize}
		\item Infinitely many killed elements with the label $\lkill$ and also all other labels.
		\item For each $i,j$ with $k = |W^{i,j}| < \infty$, infinitely many elements $\hat{c}_{i,j,k}$ with the labels $\ell_{i,j,k'}$ for $k' \in \mathbb{N}$ and also the labels $\ell_{i',j',k'}$ for $k' < |W^{i',j'}|$.
	\end{itemize}
	$\mc{A}$ will consist of these elements, and in addition:
	\begin{itemize}
		\item Infinitely many elements $a$ with the labels $\ell_{i,j,k}$ for $k < |W^{i,j}|$.
	\end{itemize}
	$\mc{C}_i$ will consist of the elements of $\mc{B}$, and in addition:
	\begin{itemize}
		\item For each $j$, infinitely many elements $c_{i,j}$ with the labels $\ell_{i,j,k}$ for $k \in \mathbb{N}$ and also the labels $\ell_{i',j',k'}$ for $k' < |W^{i',j'}|$.
	\end{itemize}
	The following four claims complete the proof.
	
	\begin{claim}
		If $i \notin S$ then $\mc{C}_i \cong \mc{B}$.
	\end{claim}
	\begin{proof}
		For each $j$, $W^{i,j}$ is finite, say of size $k$, and so the elements $c_{i,j}$ in $\mc{C}_i$ and $\hat{c}_{i,j,k}$ in $\mc{B} \subseteq \mc{A}$ have exactly the same labels. Since every element of $\mc{C}_i$ has the same labels as some element of $\mc{B}$, and since $\mc{C}_i$ contains $\mc{B}$, and there are infinitely many copies of each element, $\mc{C}_i \cong \mc{B}$.
	\end{proof}
	
	\begin{claim}
		If $i \in S$ then $\mc{C}_i \cong \mc{A}$.
	\end{claim}
	\begin{proof}
		Since $i \in S$, there is a unique $j$ such that $W_{i,j}$ is infinite. Then the elements $c_{i,j}$ in $\mc{C}_i$ and $a$ in $\mc{A}$ have exactly the same labels. For each $j' \neq j$, $W^{i,j'}$ is finite, say of size $k$, and so the elements $c_{i,j'}$ in $\mc{C}_i$ and $\hat{c}_{i,j',k}$ in $\mc{B} \subseteq \mc{A}$ have exactly the same labels. Thus $\mc{C}_i \cong \mc{A}$.
	\end{proof}
	
	\begin{claim}
		There is a $\Sigma_2$ sentence $\varphi$ such that $\mc{A} \models \varphi$ and $\mc{B} \nmodels \varphi$.
	\end{claim}
	\begin{proof}
		Let $N$ be the set of labels which do not hold of $a$ in $\mc{A}$. Let $\varphi = \exists x \bigdoublewedge_{\ell \in N} \neg \ell(x)$, that is, $\varphi$ says that there is an $x$ such that any label holding of $x$ also holds of $a$. We must argue that every element of $\mc{B}$ has a label that $a$ does not. Since $a$ does not have the label $\lkill$, we can restrict our attention to the elements $\hat{c}_{i,j,k}$ of $\mc{C}_i$ with $k = |W^{i,j}| < \infty$. Then $a$ does not have the label $\ell_{i,j,k}$, but this label holds of $\hat{c}_{i,j,k}$.
	\end{proof}
	
	\begin{claim}
		$\mc{A}$ and $\mc{B}$ have $\Pi_{3}$ Scott sentences.
	\end{claim}
	\begin{proof}
		Two elements are in the same automorphism orbit if and only if they have the same labels. We will show that for each element $x$ of $\mc{A}$ or $\mc{B}$ there is a $\Sigma_2$ formula such that whenever that formula holds of some $y$ in $\mc{A}$ or $\mc{B}$, $x$ and $y$ have the same labels; this will be enough to see that $\mc{A}$ and $\mc{B}$ have $\Pi_3$ Scott sentences. For the killed elements, this formula simply says that $\lkill$ holds of that element. For $\hat{c}_{i,j,k}$ with $k = |W^{i,j}|$, it is that $\ell_{i,j,k}$ holds. For $a$, it is the formula $\bigdoublewedge_{\ell \in N} \neg \ell(x)$ where $N$ is the set of labels which do not hold of $a$ in $\mc{A}$. The label $\lkill$ is in $N$, separating $a$ from each killed element, and for each $i,j$ with $k = |W^{i,j}| < \infty$, there is a label $\ell_{i,j,k}$ in $N$ holding of the elements $\hat{c}_{i,j,k}$ but not of $a$. This distinguishes $a$ from all of the elements of $\mc{B}$, which make up the other elements of $\mc{A}$.
	\end{proof}

	The claims complete the proof of the theorem.
\end{proof}

Now we give the inductive step of the proof of Theorem \ref{thm:jump-inversion-structures}, completing its proof.

\begin{proof}[Proof of Theorem \ref{thm:jump-inversion-structures} for $n+1$ given Theorem \ref{thm:jump-inversion-structures} for $n$]
	By Lemma \ref{lem:n1} relativized to $\mathbf{0}^{(2n)}$, we get the following. Let $S \subseteq \mathbb{N}$ be a complete $\Sigma^0_{2n+3}$ set, which is a complete $\Sigma^0_3$ relative to $\mathbf{0}^{(2n)}$ set. There are $\mathbf{0}^{(2n)}$-computable structures $\mc{A}$ and $\mc{B}$ with $\Pi_{3}$ Scott sentences and a sequence of $\mathbf{0}^{(2n)}$-computable structures $\mc{C}_i$ such that
	\[ i \in S \Longrightarrow \mc{C}_i \cong \mc{A} \]
	and
	\[ i \notin S \Longrightarrow \mc{C}_i \cong \mc{B}.\]
	Moreover, there is a $\Sigma_2$ sentence $\varphi$ such that $\mc{A} \models \varphi$ and $\mc{B} \nmodels \varphi$.
	
	Now let $\mc{A}^* = \Inv{n}{\mc{A}}$, $\mc{B}^* = \Inv{n}{\mc{B}}$, and $\mc{C}_i^* = \Inv{n}{\mc{C}_i}$. We use the properties of Theorem \ref{thm:jump-inv}. Then $\mc{A}^*$ and $\mc{B}^*$ are a computable structures with $\Pi_{n+3}$ Scott sentences. $\mc{C}_i^*$ is a computable sequence of computable structures such that
	\[ i \in S \Longrightarrow \mc{C}^*_i \cong \mc{A}^* \]
	and
	\[ i \notin S \Longrightarrow \mc{C}^*_i \cong \mc{B}^*.\]
	Finally, we obtain from Theorem \ref{thm:jump-inv} a $\Sigma_{n+2}$ sentence $\varphi^*$ be such that $\mc{A}^* \models \varphi$ and $\mc{B}^* \nmodels \varphi$.
\end{proof}

\section{Non-arithmetic is a degree spectrum}

In this section we prove the main theorem of the paper.

\nonarithmetic*
\begin{proof}
	Our structure will consist, for each $n$, of a sort which diagonalizes against the $\mathbf{0}^{(n)}$-computable structures. For each $n$, we verify the following lemma which is of interest on its own. It says that for each $n$ there is a structure whose degree spectrum is contained in the non-$\Delta^0_{n+1}$ degrees, and contains all of the non-$\Delta^0_{2n+1}$ degrees.
	
	\begin{lemma}
		There is a structure $\mc{M}_n$ which has no $\mathbf{0}^{(n)}$-computable copy, but for each set $X$ with $X \nleq_T \mathbf{0}^{(2n)}$, there is an $X$-computable copy of $\mc{M}_n$.
	\end{lemma}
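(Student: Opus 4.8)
The plan is to build $\mc{M}_n$ from the Slaman--Wehner structure and the unfriendly jump inversion operator. First I would recall the Slaman--Wehner construction in its relativized form: for any oracle $\mathbf{d}$ there is a structure $\mc{N}$ (a bouquet-type graph built from a uniformly $\mathbf{d}$-c.e.\ double sequence) such that $\mc{N}$ has an $X$-computable copy if and only if $X \nleq_T \mathbf{d}$, and moreover $\mc{N}$ has a $\Pi_3$ (boldface, indeed $\mathbf{d}$-computable $\Pi_3$) Scott sentence. Apply this with $\mathbf{d} = \mathbf{0}^{(2n)}$ to get a $\mathbf{0}^{(2n)}$-computable structure $\mc{N}_n$ whose degree spectrum (relative to the $\mathbf{0}^{(2n)}$-computability used to define it) is exactly $\{X : X \nleq_T \mathbf{0}^{(2n)}\}$. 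The key point is that $\mc{N}_n$ is only $\mathbf{0}^{(2n)}$-computable, not computable, so I cannot use it directly; this is where the unfriendly jump inversion enters.

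Next I would set $\mc{M}_n = \Inv{n}{\mc{N}_n}$, using the jump inversion operator from Section~\ref{sec:jump-inversion} built from the structures $\mc{A}, \mc{B}, \mc{C}_i$ of Theorem~\ref{thm:jump-inversion-structures} for this $n$. The properties of Theorem~\ref{thm:jump-inv} then do the work. For the positive direction: if $X \nleq_T \mathbf{0}^{(2n)}$, then $X$ computes a copy of $\mc{N}_n$ (by Slaman--Wehner relativized to $\mathbf{0}^{(2n)}$), so by part (1) of Theorem~\ref{thm:jump-inv} — applied relative to $X$, noting that an $X$-computable copy of $\mc{N}_n$ is in particular a copy computable from something which stands to $X$ as $\mathbf{0}^{(2n)}$ stands to $\mathbf{0}$, wait — I should be careful here. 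The cleaner route: part (1) says a $\mathbf{0}^{(2n)}$-computable copy of $\mc{G}$ yields a computable copy of $\Inv{n}{\mc{G}}$, and this argument only needs that the edge relation of the given copy is $\Delta^0_{2n+1}$, so relativizing the whole construction to $X$: an $X$-computable copy of $\mc{N}_n$ (whose diagram is then $\Delta^{0,X}_1 \subseteq \Delta^{0,X}_{2n+1}$... but $\mc{N}_n$ itself is defined with $\mathbf{0}^{(2n)}$-c.e.\ labels, so what one gets from $X$ is a copy whose atomic diagram is computed by $X$ directly) — I would instead argue: $X$ computes a copy of $\mc{N}_n$, and feeding that copy plus the computable structures $\mc{C}_i$ (whose definitions are genuinely computable) into the construction of part (1) gives an $X$-computable copy of $\mc{M}_n$. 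For the negative direction: suppose $\mc{M}_n$ had a $\mathbf{0}^{(n)}$-computable copy. By part (2) of Theorem~\ref{thm:jump-inv}, a $\mathbf{0}^{(n)}$-computable copy of $\Inv{n}{\mc{N}_n}$ yields a $\mathbf{0}^{(2n)}$-computable copy of $\mc{N}_n$. But $\mc{N}_n$ has no copy computable from $\mathbf{0}^{(2n)}$ (indeed its spectrum, relative to its $\mathbf{0}^{(2n)}$-presentation, omits $\mathbf{0}^{(2n)}$ itself, as Slaman--Wehner structures never have a copy computable from their base), a contradiction.

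The main obstacle — and the point requiring the most care — is the bookkeeping around the two non-relativizing facts in Theorem~\ref{thm:jump-inv}, namely reconciling "part (1) gives a \emph{computable} copy of $\Inv{n}{\mc{N}_n}$ from a $\mathbf{0}^{(2n)}$-computable copy of $\mc{N}_n$" with "we want an \emph{$X$-computable} copy of $\mc{M}_n$ from an $X$-computable copy of $\mc{N}_n$." The resolution is that part (1) is really the statement "given a copy of $\mc{G}$ whose edge relation is $\Delta^0_{2n+1}$, and the computable structures $\mc{C}_i$, build a computable copy of $\Inv{n}{\mc{G}}$"; relativized uniformly to $X$ this reads "given an $X$-computable copy of $\mc{G}$, build an $X$-computable copy of $\Inv{n}{\mc{G}}$" — because the $\mc{C}_i$ stay computable (hence $X$-computable) and the functions $f,g$ reducing the edge relation to $S$ are $X$-computable since the copy is. So what I actually need from Slaman--Wehner is just that $X$ computes a copy of $\mc{N}_n$ outright, which is exactly the positive half of the relativized Slaman--Wehner theorem for the oracle $\mathbf{0}^{(2n)}$. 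I would state this reconciliation explicitly as a short paragraph before invoking Theorem~\ref{thm:jump-inv}, so that the asymmetry between (1) and (2) — (1) used with an $X$-oracle on both sides, (2) used with the ambient computability — is transparent and the two displayed implications of the Lemma follow cleanly.
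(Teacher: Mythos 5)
Your negative direction is fine (and essentially matches the paper's: a $\mathbf{0}^{(n)}$-computable copy of the jump-inverted structure recovers a $\mathbf{0}^{(2n)}$-computable copy of the base structure, which is ruled out by a recursion-theorem argument). But the positive direction has a real gap, and it is exactly the gap that forces the paper to abandon the ``compose the Slaman--Wehner structure with $\Inv{n}{\cdot}$'' strategy and build $\mc{M}_n$ directly.

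The relativized Slaman--Wehner theorem does not say what you want it to say. Relativizing Slaman--Wehner to $\mathbf{d}=\mathbf{0}^{(2n)}$ yields a structure $\mc{N}_n$ such that, for $Y \geq_T \mathbf{0}^{(2n)}$, $Y$ computes a copy of $\mc{N}_n$ if and only if $Y >_T \mathbf{0}^{(2n)}$. It does \emph{not} say that every $X$ with $X \nleq_T \mathbf{0}^{(2n)}$ computes a copy. The Slaman--Wehner construction at stage $t$ must compare the current label set $R_{F,s}[t]$ with the approximation $W^{\mathbf{0}^{(2n)}}_{k,t}$, and $X$ alone cannot compute $W^{\mathbf{0}^{(2n)}}_{k,t}$ unless $X \geq_T \mathbf{0}^{(2n)}$; the straightforward argument produces an $(X \oplus \mathbf{0}^{(2n)})$-computable copy. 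Since the lemma (and the main theorem, where $X$ is an arbitrary non-arithmetic set, e.g.\ one Turing-incomparable with $\mathbf{0}'$) needs to handle $X$ incomparable with $\mathbf{0}^{(2n)}$, your plan's first step already fails: there is no $X$-computable copy of $\mc{N}_n$ to feed into $\Inv{n}{\cdot}$. (This is also why your sentence ``$\mc{N}_n$ is only $\mathbf{0}^{(2n)}$-computable, not computable'' cannot be right: $\mc{N}_n$ has no $\mathbf{0}^{(2n)}$-computable copy at all, precisely because it is a Slaman--Wehner-type structure over $\mathbf{0}^{(2n)}$.)

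The paper's fix is to \emph{fuse} the Slaman--Wehner diagonalization with the Marker extension rather than composing them. Instead of first computing a copy of $\mc{N}_n$ and then attaching $\mc{A}$'s and $\mc{B}$'s, one runs the Slaman--Wehner construction but never actually answers the $\Delta^0_{2n+1}$ questions (like ``$R_{F,s}[t] = W^{\mathbf{0}^{(2n)}}_{k,t}$?''). Instead $X$ computes \emph{$S$-indices} $I(F,s,\ell,t)$ for those questions and attaches the interpolating structures $\mc{C}_{I(F,s,\ell,t)}$, which are uniformly computable and equal to $\mc{A}$ or $\mc{B}$ according to whether $I(F,s,\ell,t) \in S$. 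Thus $X$ produces the presentation without ever learning the truth values; the complexity of those truth values is absorbed into the $\Sigma^0_{2n+1}$-completeness of deciding $\mc{C}_i \cong \mc{A}$. A second, related point: the Marker extension attaches \emph{infinitely many} pairs $(\mc{C}_I, \mc{C}_{\overline I})$ per label $\ell$ and per element, with $R_\ell$ defined by ``some pair is flipped.'' This is needed because the labels are intrinsically c.e.-like and get committed one stage at a time; a single structure (or single pair) per label, which is what $\Inv{n}{\cdot}$ attaches per edge, forces an up-front decision that the $X$-computable construction cannot make. So you should replace your plan's first two steps by this direct $S$-index construction; the jump-inversion machinery supplies the structures $\mc{A},\mc{B},\mc{C}_i,\varphi$ but is not itself applied as a black box.
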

	
	With a little extra work we could make this uniform in $X$, but in the construction we give there will be two uniform procedures from building $\mc{M}_n$ from $X$, one which works if $X$ is not $\Sigma^0_{2n+1}$, and the other working if $\overline{X}$ is not $\Sigma^0_{2n+1}$. If $X$ is non-arithmetic, then it is not $\Sigma^0_{2n+1}$ for any $n$, and so there will be a uniform construction of $\mc{M}_n$ from $X$.
	
	Letting $\mc{M}$ be the structure whose $n$th sort consists of a copy of $\mc{M}_n$, the degree spectrum of $\mc{M}$ is the non-arithmetic degrees. For each arithmetic set $X$, $X \leq_T \mathbf{0}^{(n)}$ for some $n$, and so $X$ cannot even compute a copy of the $n$th sort of $\mc{M}$. On the other hand, for each non-arithmetic set $X$, for each $n$ we have $X \nleq_T \mathbf{0}^{(2n)}$ and so $X^{(2n)} \nleq_T \mathbf{0}^{(2n)}$; thus, $X$ can compute a copy of $\mc{M}_n$. Moreover, since $X$ is not $\Sigma^0_{2n+1}$, $X$ can uniformly construct a copy of $\mc{M}_n$, and hence $X$ can compute a copy of $\mc{M}$. Thus it remains to prove the lemma.
	
	\begin{proof}[Proof of lemma]
		Fix $n$. We will construct $\mc{M} = \mc{M}_n$ as in the claim. $\mc{M}$ will essentially have the form of an unfriendly jump inversion, though we will have to build $\mc{M}$ directly rather than defining it as the jump inversion of some other structure and applying Theorem \ref{thm:jump-inv}. (Technically, it will be constructed using a Marker extension rather than a jump inversion.)
		
		Let $\mc{A} \ncong \mc{B}$ be computable $\mc{L}$-structures as in Theorem \ref{thm:jump-inversion-structures}, i.e., for $S$ a complete $\Sigma^0_{2n+1}$ set there is a computable sequence of structures $\mc{C}_i$ such that
		\[ i \in S \Longleftrightarrow \mc{C}_i \cong \mc{A}\]
		and
		\[ i \notin S \Longleftrightarrow \mc{C}_i \cong \mc{B}.\]
		Moreover, we have a $\mathbf{0}^{(n)}$-computable $\Sigma_{n+1}$ sentence $\varphi$ such that $\mc{A} \models \varphi$ and $\mc{B} \nmodels \varphi$.
		
		Consider the language $\{U_k : k \in \omega\} \cup \{R_\ell : \ell \in \omega\}$ where all of these are unary relations, and we think of the $R_\ell$ as being c.e.\ relations. This is (one choice for) the language in which the Slaman-Wehner structures are defined. The $U_k$ are sorts, and we think of an element $a$ as c.e.-coding the set $\{ \ell : R_\ell(a)\}$. The standard idea for the construction is to have the elements of the $k$th sort code exactly the finite sets $\{W_e : \text{$W_e$ finite and $W_e \neq W_k$}\}$. The $R_\ell$ are made into c.e.\ relations by, e.g., replacing them in the structure by attaching a loop of length $\ell+3$ to $a$ to signify that $R_\ell(a)$. It is easier to think of the $R_\ell$ as being in the language but being c.e.\ in a computable copy.
		
		In constructing $\mc{M}$, rather than making the relations $R_\ell$ c.e., we will apply a variant of the unfriendly Marker extensions to the relations $R_\ell$ while leaving the $U_k$ alone. The language will consist of the unary relations $U_k$ as well as additional binary relations $D_{\ell}$ and $\overline{D}_{\ell}$, an equivalence relation $E$, and the symbols from $\mc{L}$ the language of $\mc{A}$ and $\mc{B}$. The $U_k$ will still be sorts, but we now have elements not satisfying any of the $U_k$. For each $x \in U_k$, the relations $D_{\ell,s}$ and $\overline{D}_{\ell,s}$ will associate with $x$ two infinite sets
		\[ D_{\ell}(x) = \{ y : D_{\ell}(x,y)\} \text{ and } \overline{D}_{\ell}(x) = \{y : \overline{D}_{\ell}(x,y)\}.\]
		These sets will partition the elements which are not in any of the sorts $U_k$. $E$ will be an equivalence relation on all of the non-$U$ elements and will divide each of these sets $D_{\ell}(x)$ and $\overline{D}_\ell(x)$ into infinitely many infinite equivalence classes. Each equivalence class will be split across both $D_{\ell}(x)$ and $\overline{D}_{\ell}(x)$ for some $x$. Each equivalence class, when restricted to $D_{\ell}(x)$ or $\overline{D}_{\ell}(x)$, will be the domain of an $\mc{L}$-structure. Thus each of these $\mc{L}$-structures in $D_{\ell}(x)$ will have an associated $\mc{L}$-structure in $\overline{D}_{\ell}(x)$ in the same equivalence class, which we call its dual. The elements of these $\mc{L}$-structures will be elements outside of the sorts $U_n$, and these structures will all be disjoint from each other. Among each $\mc{L}$-structure and its dual, one will be isomorphic to $\mc{A}$ and the other to $\mc{B}$. See Figure \ref{fig} for a depiction of the elements associated to some $x$.
		
		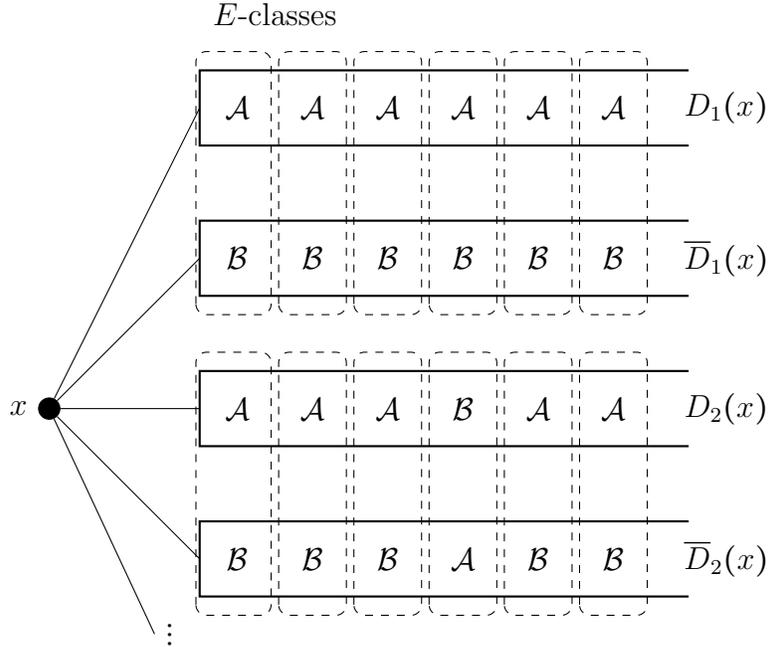
\begin{figure}[h]
			\[ \begin{tikzpicture}
				\node (v1) at (0,0)     [circle, fill=black,draw=black, inner sep=0.1cm, label=left:$x$] {};
				
				\node at (9,4) [] {$D_1(x)$};
				\node at (9,2) [] {$\overline{D}_1(x)$};
				\node at (9,0) [] {$D_2(x)$};
				\node at (9,-2) [] {$\overline{D}_2(x)$};
				
				\node at (3,5.25) [] {$E$-classes};
				
				\node at (2.5,4) [] {$\mc{A}$};
				\node at (2.5,2) [] {$\mc{B}$};
				\node at (3.5,4) [] {$\mc{A}$};
				\node at (3.5,2) [] {$\mc{B}$};
				\node at (4.5,4) [] {$\mc{A}$};
				\node at (4.5,2) [] {$\mc{B}$};
				\node at (5.5,4) [] {$\mc{A}$};
				\node at (5.5,2) [] {$\mc{B}$};
				\node at (6.5,4) [] {$\mc{A}$};
				\node at (6.5,2) [] {$\mc{B}$};
				\node at (7.5,4) [] {$\mc{A}$};
				\node at (7.5,2) [] {$\mc{B}$};
				
				\node at (2.5,0) [] {$\mc{A}$};
				\node at (2.5,-2) [] {$\mc{B}$};
				\node at (3.5,0) [] {$\mc{A}$};
				\node at (3.5,-2) [] {$\mc{B}$};
				\node at (4.5,0) [] {$\mc{A}$};
				\node at (4.5,-2) [] {$\mc{B}$};
				\node at (5.5,0) [] {$\mc{B}$};
				\node at (5.5,-2) [] {$\mc{A}$};
				\node at (6.5,0) [] {$\mc{A}$};
				\node at (6.5,-2) [] {$\mc{B}$};
				\node at (7.5,0) [] {$\mc{A}$};
				\node at (7.5,-2) [] {$\mc{B}$};
				
				\node (dots) at (1.6,-3)     [] {$\vdots$};
				
				\draw[thick] (8.5,4.5) -- (2,4.5) -- (2,3.5) -- (8.5,3.5);
				
				\draw[thick] (8.5,2.5) -- (2,2.5) -- (2,1.5) -- (8.5,1.5);
				
				\draw [draw=black,rounded corners,dashed] (1.95,1.25) rectangle (2.95,4.75);
				\draw [draw=black,rounded corners,dashed] (3.05,1.25) rectangle (3.95,4.75);
				\draw [draw=black,rounded corners,dashed] (4.05,1.25) rectangle (4.95,4.75);
				\draw [draw=black,rounded corners,dashed] (5.05,1.25) rectangle (5.95,4.75);
				\draw [draw=black,rounded corners,dashed] (6.05,1.25) rectangle (6.95,4.75);
				\draw [draw=black,rounded corners,dashed] (7.05,1.25) rectangle (7.95,4.75);
				
				\draw[thick] (8.5,0.5) -- (2,0.5) -- (2,-0.5) -- (8.5,-0.5);
				
				\draw[thick] (8.5,-2.5) -- (2,-2.5) -- (2,-1.5) -- (8.5,-1.5);
				
				\draw [draw=black,rounded corners,dashed] (1.95,-2.75) rectangle (2.95,0.75);
				\draw [draw=black,rounded corners,dashed] (3.05,-2.75) rectangle (3.95,0.75);
				\draw [draw=black,rounded corners,dashed] (4.05,-2.75) rectangle (4.95,0.75);
				\draw [draw=black,rounded corners,dashed] (5.05,-2.75) rectangle (5.95,0.75);
				\draw [draw=black,rounded corners,dashed] (6.05,-2.75) rectangle (6.95,0.75);
				\draw [draw=black,rounded corners,dashed] (7.05,-2.75) rectangle (7.95,0.75);
				
				\draw   (v1) -- (2,4);
				\draw   (v1) -- (2,2);
				\draw   (v1) -- (2,0);
				\draw   (v1) -- (2,-2);
				
				\draw   (v1) -- (1.4,-3);
			\end{tikzpicture}\]
			\caption{An element $x \in U_k$ and all of its associated elements. The element $x$ shown here satisfies $\neg R_1(x)$ because each $D_1(x)$-structure is isomorphic to $\mc{A}$ and each $\overline{D}_1(x)$-structure is isomorphic to $\mc{B}$. It satisfies $R_2(x)$ as witnessed by the fourth equivalence class in $D_2(x)$ and $\overline{D}_2(x)$.}\label{fig}
		\end{figure}

		For each $x$, either all of the $\mc{L}$-structures in $D_{\ell}(x)$ will be isomorphic to $\mc{A}$ (and hence all of the $\mc{L}$-structures in $\overline{D}_{\ell}(x)$ will be isomorphic to $\mc{B}$) or one will be isomorphic to $\mc{B}$ and the others all isomorphic to $\mc{A}$ (and hence one $\mc{L}$-structure in $\overline{D}_{\ell}(x)$ will be isomorphic to $\mc{A}$ and the others to $\mc{B}$). Thus we have the definable relation $R_\ell(x)$ which holds of $x$ if and only if one of the $\mc{L}$-structures of $D_\ell(x)$ is isomorphic to $\mc{B}$ if and only if one of the $\mc{L}$-structures of $\overline{D}_\ell(x)$ is isomorphic to $\mc{A}$. Note that $R_\ell$ is definable by a $\mathbf{0}^{(n)}$-computable $\Sigma_{n+1}$ formula.\footnote{The reader might wonder why we cannot just attach to each $x$ for each $m$ an $\mc{L}$-structure isomorphic to either $\mc{A}$ or $\mc{B}$, with $R_m(x)$ if and only if the structure is isomorphic to $\mc{B}$. We would still have that $R_m$ is $\Sigma^0_{2n+1}$ in any computable copy. However in non-computable copies the complexities would not be correct.}
		
		To a structure $\mc{N}$ of the above type, we can assign certain families of sets coded by the structure. Each point $x$ in one of the sorts $U_n$ codes a set
		\begin{align*}
			F(x) &= \{\ell : R_\ell(x)\} \\ &= \{ \ell : \text{one of the structures attached to $x$ via $D_\ell$ is $\cong \mc{B}$}\} \\ &= \{ \ell : \text{one of the structures attached to $x$ via $\overline{D}_\ell$ is $\cong \mc{A}$}\}
		\end{align*}
		Then the $k$th sort of $\mc{N}$ codes the family of sets 
		\[ \mc{F}_k = \{ F(x) : \text{$x$ is in the $n$th sort $U_n$ of $\mc{N}$}\}.\]
		Thus $\mc{N}$ codes the sequence of families of sets $(\mc{F}_k)_{k \in \omega}$. Moreover, to each sequence of families $\mc{F} = (\mc{F}_k)_{k \in \omega}$ we can construct a structure coding $\mc{F}$, with $\mc{F}_k$ coded in the $k$th sort
		
		Let $\mc{M}$ be the structure whose $k$th sort corresponds to the sequence of families
		\[ \{ F : \text{$F$ finite and $F \neq W^{\mathbf{0}^{(2n)}}_k$}\}\]
		where $W^{\mathbf{0}^{{(2n)}}}_k$ is the $k$th $\Sigma^0_{2n+1}$ set.
		
		\begin{claim}\label{cl:one}
			There is no $\mathbf{0}^{(n)}$-computable copy of $\mc{M}$.
		\end{claim}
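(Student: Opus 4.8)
The plan is to run the ``easy direction'' of the Slaman--Wehner theorem, relativized to $\mathbf{0}^{(2n)}$. Suppose toward a contradiction that $\mc{N}$ is a $\mathbf{0}^{(n)}$-computable copy of $\mc{M}$. The first step is to bound the complexity of the coded relations $R_\ell$ on $\mc{N}$. By the construction, on any copy of $\mc{M}$ --- in particular on $\mc{N}$ --- the relation $R_\ell$ is defined by the fixed $\mathbf{0}^{(n)}$-computable $\Sigma_{n+1}$ formula exhibited above, namely the one asserting that some $\mc{L}$-structure attached to $x$ via $\overline{D}_\ell$ is $\cong \mc{A}$, equivalently satisfies $\varphi$. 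Evaluating a $\mathbf{0}^{(n)}$-computable $\Sigma_{n+1}$ formula in the $\mathbf{0}^{(n)}$-computable structure $\mc{N}$ produces a $\Sigma^0_{n+1}(\mathbf{0}^{(n)}) = \Sigma^0_{2n+1}$ relation, and this is uniform in $\ell$. Since each sort $U_k^{\mc{N}}$ is $\mathbf{0}^{(n)}$-computable, the set
\[ \{(k,x,\ell) : x \in U_k^{\mc{N}} \text{ and } R_\ell(x)\} \]
is $\Sigma^0_{2n+1}$, that is, c.e.\ relative to $\mathbf{0}^{(2n)}$.

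Consequently, uniformly in $k$ we can use $\mathbf{0}^{(2n)}$ to enumerate the elements $x$ of $U_k^{\mc{N}}$ and, for each one, to enumerate its coded set $F(x) = \{\ell : R_\ell(x)\}$. This gives a $\mathbf{0}^{(2n)}$-computable function $(k,i) \mapsto e(k,i)$ such that, writing $W^{\mathbf{0}^{(2n)}}_{\bullet}$ for a standard enumeration of the $\mathbf{0}^{(2n)}$-c.e.\ sets, $\{W^{\mathbf{0}^{(2n)}}_{e(k,i)} : i \in \omega\} = \mc{F}_k$ for every $k$. But by the choice of $\mc{M}$ we have $\mc{F}_k = \{F : F \text{ finite and } F \neq W^{\mathbf{0}^{(2n)}}_k\}$, which is exactly Wehner's family built over the oracle $\mathbf{0}^{(2n)}$. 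So it suffices to show that no family of this form admits a listing of the above kind, and this is Wehner's diagonalization carried out relative to $\mathbf{0}^{(2n)}$.

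That diagonalization goes as follows. By the recursion theorem relative to $\mathbf{0}^{(2n)}$, fix $k$ so that $W^{\mathbf{0}^{(2n)}}_k$ is the $\mathbf{0}^{(2n)}$-c.e.\ set enumerated by this procedure: search ($\mathbf{0}^{(2n)}$-effectively) for some $i$ with $W^{\mathbf{0}^{(2n)}}_{e(k,i)} \neq \varnothing$, and upon finding the first such witness $i_0$, thereafter copy every element of $W^{\mathbf{0}^{(2n)}}_{e(k,i_0)}$ into $W^{\mathbf{0}^{(2n)}}_k$. If no such $i$ is ever found, then $W^{\mathbf{0}^{(2n)}}_k = \varnothing$, so $\mc{F}_k$ ought to contain every nonempty finite set, whereas the listing $(W^{\mathbf{0}^{(2n)}}_{e(k,i)})_i$ produces only $\varnothing$ --- a contradiction. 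Otherwise $W^{\mathbf{0}^{(2n)}}_k = W^{\mathbf{0}^{(2n)}}_{e(k,i_0)} \in \mc{F}_k$, so this set is a finite set that, by the definition of $\mc{F}_k$, must differ from $W^{\mathbf{0}^{(2n)}}_k$ --- contradicting that it equals $W^{\mathbf{0}^{(2n)}}_k$. Either way we obtain the desired contradiction, so $\mc{M}$ has no $\mathbf{0}^{(n)}$-computable copy.

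I do not expect a serious obstacle here: this is the standard (easy) half of a Slaman--Wehner-type argument. The single point that genuinely relies on the work of this section is that the $\Sigma^0_{2n+1}$ bound on the $R_\ell$ holds in an \emph{arbitrary} $\mathbf{0}^{(n)}$-computable copy of $\mc{M}$, not merely the canonical one --- which is precisely what the ``$D_\ell$/$\overline{D}_\ell$ together with $E$-classes'' presentation of the Marker extension was designed to guarantee (cf.\ the footnote contrasting it with the naive ``attach a single $\mc{L}$-structure'' approach, which would give the wrong complexity in noncomputable copies). The real content of the lemma lies in the complementary claim --- that every $X$ with $X \nleq_T \mathbf{0}^{(2n)}$ computes a copy of $\mc{M}$ --- where the Slaman--Wehner coding strategy must be interleaved with the unfriendly jump inversion.
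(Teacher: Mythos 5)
Your proposal is correct and takes essentially the same approach as the paper: bound the complexity of the coded sets in any $\mathbf{0}^{(n)}$-computable copy to be $\Sigma^0_{2n+1}$ (i.e., $\mathbf{0}^{(2n)}$-c.e.) using the $\mathbf{0}^{(n)}$-computable $\Sigma_{n+1}$ separating formula, then diagonalize via the recursion theorem relative to $\mathbf{0}^{(2n)}$. The only cosmetic difference is that the paper cuts straight to the point --- pick the first element $a_k$ of the $k$th sort, get a uniform $\mathbf{0}^{(2n)}$-c.e.\ index for $V_k = F(a_k)$, apply the recursion theorem to force $V_k = W_k^{\mathbf{0}^{(2n)}}$, contradiction since $V_k \in \mc{F}_k$ --- whereas you re-derive the full Wehner-style enumeration of $\mc{F}_k$ and run the two-case diagonalization; both are fine.
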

		\begin{proof}
			In a $\mathbf{0}^{(n)}$-computable copy of $\mc{M}$, let $a_k$ be the first element of the $k$th sort, and let $V_k = F(a_k)$. Then $V_k$ is $\Sigma^0_{2n+1}$ as $\ell \in V_k$ if and only if one of the structures attached to $a_k$ via $D_{\ell}$ is isomorphic to $\mc{A}$ (and we have a $\mathbf{0}^{(n)}$-computable $\Sigma_{n+1}$ formula distinguishing $\mc{A}$ from $\mc{B}$). Moreover, we can find a $\Sigma_{2n+1}$ index for $V_k$ uniformly in $k$, and hence a c.e.-relative-to-$\mathbf{0}^{(2n)}$ index for $V_k$. Then, by the recursion theorem, for some $k$, we have $V_k = W_k^{\mathbf{0}^{(2n)}}$. But then this was not a copy of $\mc{M}$ after all.
		\end{proof}
		
		In the second claim we make use of the idea of what we call \textit{$S$-indices} for $\Sigma^0_{2n+1}$ facts. By an $S$-index we mean some $I \in \mathbb{N}$. $I$ represents \textsf{true} or $1$ if $I \in S$, in which case we write $I \equiv 1$, and represents \textsf{false} or $0$ if $I \notin S$, in which case we write $I \equiv 0$. Because $S$ is $\Sigma^0_{2n+1}$-complete, any $\Sigma^0_{2n+1}$ fact has an $S$-index, and we can generally compute such an index in reasonable situations. For example, given some positive Boolean combination of facts represented by $S$-indices, we can compute an $S$-index for the result, e.g., given $S$-indices $I_1$, $I_2$, and $I_3$ we can compute an $S$-index $J$ representing ``($I_1$ and $I_2$) or $I_3$'', that is,
		\[ J \in S \Longleftrightarrow \text{$I_1$ and $I_2$ are both in $S$, or $I_3$ is in $S$.}\]
		We can also represent $\Delta^0_{2n+1}$ facts by giving a pair of $S$-indices $I$ and $\overline{I}$ with $I \equiv 1 \Longleftrightarrow \overline{I} \equiv 0$.
		
		The importance of $S$-indices is that given an $S$-index $I$, \[\mc{C}_I \cong \mc{A} \Longleftrightarrow I \equiv 1 \text{ and }\mc{C}_I \cong \mc{B} \Longleftrightarrow I \equiv 0.\]
		The $\mc{C}_I$ are computable, but in a construction of some larger structure we might non-computably build some sequence of $S$-indices $I_1,I_2,\ldots$ and then incorporate the structures $\mc{C}_{I_1},\mc{C}_{I_2},\ldots$ into the larger structure.
		
		\begin{claim}\label{cl:two}
			Suppose that $X$ is not $\Sigma^0_{2n+1}$. Then there is (uniformly in $X$) an $X$-computable copy $\mc{N}$ of $\mc{M}$.
		\end{claim}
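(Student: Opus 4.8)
The plan is to build $\mc{N}$ by a stage-by-stage procedure computable in $X$, following the Slaman--Wehner template but with the level $\Sigma^0_1$ of the original argument replaced throughout by $\Sigma^0_{2n+1}$ and the resulting ``jump-lifting'' absorbed into the machinery of $S$-indices and the computable structures $\mc{C}_I$. A copy of $\mc{M}$ is pinned down by choosing, for each sort $U_k$, which finite sets are coded there, and (by the definition of $\mc{M}$) one must arrange that $U_k$ codes exactly $\{F : F \text{ finite}, \, F \neq W^{\mathbf{0}^{(2n)}}_k\}$. The key enabling point is that, although ``$m \in W^{\mathbf{0}^{(2n)}}_k$'' is a $\Sigma^0_{2n+1}$ question $X$ cannot decide, the construction never needs to decide it: whenever it wants an $\mc{L}$-gadget whose isomorphism type is to record a $\Sigma^0_{2n+1}$ fact $P$, it effectively computes (by $\Sigma^0_{2n+1}$-completeness of $S$) an $S$-index $I$ with $I \equiv 1 \Leftrightarrow P$ and attaches $\mc{C}_I$, which in $\mc{N}$ resolves to $\mc{A}$ or $\mc{B}$ according as $P$ holds. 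Thus the whole construction of $\mc{N}$ is $X$-computable --- indeed the attachment of gadgets is computable once the $S$-indices are computed --- while in every copy the relations $R_\ell$ acquire exactly the complexity designed into $\mc{M}$; this is precisely why $\mc{M}$ was built with the $D_\ell,\overline{D}_\ell,E$ apparatus rather than by hanging a single copy of $\mc{A}$ or $\mc{B}$ on each element (cf.\ the footnote to the construction of $\mc{M}$).

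With this translation, the combinatorial heart is a Wehner-style delayed enumeration run, $X$-computably, inside each $U_k$. Each finite set $F$ is handled by a family of elements of $U_k$ (each with its infinitely many copies). One element records, via its attached gadgets, the $\Sigma^0_{2n+1}$ fact ``$W^{\mathbf{0}^{(2n)}}_k \not\subseteq F$'' --- which already forces $F \neq W^{\mathbf{0}^{(2n)}}_k$ --- and is built to code $F$ exactly when that fact holds. Further ``version'' elements then hedge, one for each size $m$, against the remaining possibility that $W^{\mathbf{0}^{(2n)}}_k$ is a subset of $F$ of size exactly $m$: version $(F,m)$ codes $F$ so long as that hypothesis is live, and ``retires'' --- extending the set it codes to a strictly larger finite set, with the target chosen using $X$ --- once the hypothesis fails. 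The whole scheme is arranged so that, in $\mc{N}$: for every finite $F \neq W^{\mathbf{0}^{(2n)}}_k$ some element ends up coding exactly $F$; every element codes some finite set $\neq W^{\mathbf{0}^{(2n)}}_k$; and $W^{\mathbf{0}^{(2n)}}_k$ itself is never coded. The last clause is where the hypothesis on $X$ is spent: the retirement targets are rigged so that a failure of it would place $X$ in $\Sigma^0_{2n+1}$, contrary to assumption. (The split in the lemma between ``$X$ not $\Sigma^0_{2n+1}$'', treated here, and ``$\overline{X}$ not $\Sigma^0_{2n+1}$'', treated in the companion claim, is the higher-level analogue of Wehner's split between ``$X$ not c.e.'' and ``$\overline{X}$ not c.e.''.)

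It then remains to verify $\mc{N} \cong \mc{M}$. That $\mc{N}$ has the shape required of a copy of $\mc{M}$ --- every non-$U$ element lying in an attached $\mc{L}$-gadget, each gadget being $\cong \mc{A}$ or $\cong \mc{B}$, and within each block $(x,\ell)$ either all gadgets being $\cong \mc{A}$ or all but one --- is immediate from the construction together with the properties of $\mc{A}$, $\mc{B}$, and the $\mc{C}_I$ guaranteed by Theorem~\ref{thm:jump-inversion-structures}. What is left is to check that for each $k$ the family of finite sets coded in $U_k$ is exactly $\{F : F \text{ finite}, \, F \neq W^{\mathbf{0}^{(2n)}}_k\}$: that every such $F$ appears (via its ``non-containment'' element, or via a surviving version, or as the retirement target of some version) and that nothing else does --- no infinite set and, crucially, not $W^{\mathbf{0}^{(2n)}}_k$ itself. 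This exclusion of $W^{\mathbf{0}^{(2n)}}_k$ is the direct higher-level analogue of the corresponding verification in Wehner's theorem, it is what the entire apparatus of unfriendly Marker extensions was set up to make possible at level $2n+1$, and it is where I expect essentially all of the difficulty to lie.
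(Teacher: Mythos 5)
You have the right framework: each sort $U_k$ must code exactly the finite sets different from $W_k^{\mathbf{0}^{(2n)}}$, and the construction is made $X$-computable by translating every decision about gadget placement into a pair of complementary $S$-indices and attaching the corresponding $\mc{C}_I$. This matches the paper's setup. But the combinatorial heart is not actually carried out, and the specific mechanism you sketch does not work. You describe a ``first element'' $a_F$ that ``codes $F$ exactly when $W_k^{\mathbf{0}^{(2n)}} \not\subseteq F$''. Gadget placement is governed by $\Delta^0_{2n+1}$ conditions --- each decision is a pair $I,\overline I$ of complementary $S$-indices, so the structure attached is $\mc{A}$ or $\mc{B}$ according to a $\Sigma^0_{2n+1}$ fact whose negation is also $\Sigma^0_{2n+1}$. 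But the case in which $a_F$ is supposed to deviate from coding $F$ is exactly ``$W_k^{\mathbf{0}^{(2n)}} \subseteq F$'', which is a genuinely $\Pi^0_{2n+1}$ fact (emptiness of the c.e.-in-$\mathbf{0}^{(2n)}$ set $W_k^{\mathbf{0}^{(2n)}} \cap \overline F$), not $\Delta^0_{2n+1}$. No $S$-index pair tracks it, so this element cannot be built. Likewise, the ``version $(F,m)$'' elements that ``retire once'' to ``a strictly larger finite set with the target chosen using $X$'' face the problem that a one-shot retirement target may itself equal $W_k^{\mathbf{0}^{(2n)}}$; you flag this as the step ``where I expect essentially all of the difficulty to lie'' without resolving it, which is precisely the gap.

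The paper uses a different and simpler indexing that dissolves both issues. Elements are $a_{F,s}$ for finite $F$ and $s\in\omega$ (a patience parameter). The coded set $R_{F,s}$ starts at $F$; nothing is added through stage $s$; at each later stage $t+1$, if $R_{F,s}[t]$ equals the stage-$t$ approximation $W_{k,t}^{\mathbf{0}^{(2n)}}$, the least element of $X - W_{k,t}^{\mathbf{0}^{(2n)}}$ is enumerated into $R_{F,s}$. Every trigger is a $\Delta^0_{2n+1}$ condition on finite data (an equality of two finite sets, each $\Delta^0_{2n+1}$ uniformly), so the $S$-indices are computable from $X$. Crucially the retirement is iterated, not one-shot: if the trigger fired infinitely often then $R_{F,s}$ would equal both $W_k^{\mathbf{0}^{(2n)}}$ and $F\cup X$, placing $X$ in $\Sigma^0_{2n+1}$; so the process halts, $R_{F,s}$ is finite, and a one-more-step argument rules out $R_{F,s} = W_k^{\mathbf{0}^{(2n)}}$. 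For any finite $F\neq W_k^{\mathbf{0}^{(2n)}}$, choosing $s$ large enough that $W_{k,t}^{\mathbf{0}^{(2n)}}\neq F$ for all $t\geq s$ gives $R_{F,s}=F$. This iterated enumeration of least unused elements of $X$, tied to the patience parameter $s$ rather than to a guessed size $m$ of $W_k^{\mathbf{0}^{(2n)}}$, is the missing mechanism in your sketch.
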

		\begin{proof}
			Fix $k$. We want to build (uniformly in $X$ and $k$) a copy $\mc{N}_k^X$ of the $k$th sort coding
			\[ \{ F : \text{$F$ finite and $F \neq W^{\mathbf{0}^{{(2n)}}}_k$}\}.\]
			If we can do this uniformly, then we can compute uniformly in $X$ a copy of $\mc{M}$.
			Our elements will be of the form $a_{F,s}$ for each finite set $F$ and $s \in \omega$. We will build our structure to be $X$-computable in the following way. For each $a_{F,s}$, $\ell$, and $t$, we will have, $X$-computably, two $S$-indices $I(F,s,\ell,t)$ and $\overline{I}(F,s,\ell,t)$ which are complementary, i.e., $I(F,s,\ell,t) \nequiv \overline{I}(F,s,\ell,t)$. $I(F,s,\ell,t)$ and $\overline{I}(F,s,\ell,t)$ describe which structures should be attached to $a_{F,s}$ in $\mc{N}_k^X$. 
			
			The $t$th structure attached to $a_{F,s}$ in $D_\ell(a_{F,s})$ will be:
			\begin{itemize}
				\item $\mc{A}$, if $I(F,s,\ell,t) \equiv 0$ (and $\overline{I}(F,s,\ell,t) \equiv 1$), and
				\item $\mc{B}$, if $I(F,s,\ell,t) \equiv 1$ (and $\overline{I}(F,s,\ell,t) \equiv 1$)
			\end{itemize}
			The $t$th structure attached to $a_{F,s}$ in $\overline{D}_\ell(a_{F,s})$ will be:
			\begin{itemize}
				\item $\mc{B}$, if ${I}(F,s,\ell,t) \equiv 0$ (and $\overline{I}(F,s,\ell,t) \equiv 1$), and
				\item $\mc{A}$, if ${I}(F,s,\ell,t) \equiv 1$ (and $\overline{I}(F,s,\ell,t) \equiv 0$).
			\end{itemize}
			The $t$th structures in $D_\ell(a_{F,s})$ and $\overline{D}_\ell(a_{F,s})$ will be in the same $E$-equivalence class. For each $F,s,\ell$ there will be at most one $t$ with $I(F,s,\ell,t) \equiv 1$ and $\overline{I}(F,s,\ell,t) \equiv 0$.
			
			To compute a copy of the structure $\mc{N}_k^X$ using $X$, we put as the $t$th equivalence class in $D_\ell(a_{F,s})$ a copy of $\mc{C}_{\overline{I}(F,s,\ell,t)}$ and in $\overline{D}_{\ell}(a_{F,s})$ a copy of $\mc{C}_{I(F,s,\ell,t)}$. Then we have $\mc{C}_{I(F,s,\ell,t)} \cong \mc{A}$ and $\mc{C}_{\overline{I}(F,s,\ell,t)} \cong \mc{B}$ if $I(F,s,\ell,t) \equiv 0$ and $\overline{I}(F,s,\ell,t) \equiv 1$, and $\mc{C}_{I(F,s,\ell,t)} \cong \mc{A}$ and $\mc{C}_{\overline{I}(F,s,\ell,t)} \cong \mc{B}$ if $I(F,s,\ell,t) \equiv 0$ and $\overline{I}(F,s,\ell,t) \equiv 1$, as desired. Since the sequence $\mc{C}_i$ is computable and the indices $I(F,s,\ell,t)$ and $\overline{I}(F,s,\ell,t)$ are $X$-computable, this procedure builds an $X$-computable structure. It remains to define $I(F,s,\ell,t)$ and $\overline{I}(F,s,\ell,t)$.
			
			Recall from the description of the type of structure that $\mc{M}$ would be that we had the definable relations $R_\ell(x)$ which holds of $x$ if and only if one of the $\mc{L}$-structures of $D_\ell(x)$ is isomorphic to $\mc{B}$ if and only if one of the $\mc{L}$-structures of $\overline{D}_\ell(x)$ is isomorphic to $\mc{A}$. Then we have $R_{\ell}(a_{F,s})$ if and only if there is some $t$ such that $I(F,s,\ell,t) \equiv 1$ and $\overline{I}(F,s,\ell,t) \equiv 0$. We can get approximations to this as follows.
			
			We define, for each $t$,
			\[ R_{F,s}[t] = \{ \ell : \exists t' \leq t \; I(F,s,\ell,t') \equiv 1 \} = \{ \ell : \exists t' \leq t \; \overline{I}(F,s,\ell,t') \equiv 0 \}.\]
			This is a $\Delta^0_{2n+1}$ set and for each $\ell$ we can compute $S$-indices for the facts ``$\ell \in R_{F,s}[t]$'' and ``$\ell \notin R_{F,s}[t]$'' from $S$-indices $I(F,s,\ell,t')$ and $\overline{I}(F,s,\ell,t')$. That is, given $F,s,\ell,t$ we can compute some $j,\overline{j}$ such that 
			\[ \ell \in R_{F,s}[t] \Longleftrightarrow j \in S \]
			and
			\[ \ell \in R_{F,s}[t] \Longleftrightarrow \overline{j} \in S.\]
			Also, let $W^{\mathbf{0}^{(2n)}}_{k,t}$ be the stage $t$ approximation to $W^{\mathbf{0}^{(2n)}}_k$, i.e., $\ell \in W^{\mathbf{0}^{(2n)}}_{k,t}$ if and only if $\Phi_{k,t}^{\mathbf{0}^{(2n)}}(\ell) \downarrow$. $W^{\mathbf{0}^{(2n)}}_{k,t}$ is $\Delta^0_{2n+1}$ uniformly in $t$, and we can compute $S$-indices for ``$\ell \in W^{\mathbf{0}^{(2n)}}_{k,t}$'' and ``$\ell \notin W^{\mathbf{0}^{(2n)}}_{k,t}$''.
			
			Define $I(F,s,\ell,t)$ and $\overline{I}(F,s,\ell,t)$ as follows, inductively on $t$. For each $t$ we define them for all $F,s,\ell$ and so we can use $R_{F,s}[t]$ in the definitions at stage $t+1$. We begin by defining $I(F,s,\ell,t)$ and $\overline{I}(F,s,\ell,t)$ by represented value, i.e., whether $I(F,s,\ell,t) \equiv 0$ or $I(F,s,\ell,t)\equiv 1$, before explaining how they can be defined as $S$-indices. The idea is that we carry out something like the standard Slaman-Wehner argument, relative to $\mathbf{0}^{(2n)}$, using $X$ as an oracle.
			\begin{enumerate}
				\item $I(F,s,\ell,t) \equiv 1$ and $\overline{I}(F,s,\ell,t) \equiv 0$ if $\ell \in F$.
				\item $I(F,s,\ell,t) \equiv 0$ and $\overline{I}(F,s,\ell,t) \equiv 1$ if $t \leq s$ and $\ell \notin F$.
				\item At stage $t+1$, find the first $t+1$ elements of $X$, and let
				\begin{enumerate}
					\item $I(F,s,\ell,t+1) \equiv 1$ and $\overline{I}(F,s,\ell,t+1) \equiv 0$ if
					$R_{F,s}[t] = W_{k,t}^{\mathbf{0}^{(2n)}}$ and $\ell$ is the least element of $X - W_{k,t}^{\mathbf{0}^{(2n)}}$;
					\item $I(F,s,t+1) \equiv 0$ and $\overline{I}(F,s,\ell,t) \equiv 1$ otherwise.
				\end{enumerate}
			\end{enumerate}
			Note that by define, we mean that we compute using oracle $X$ the $S$-indices for $I$ and $\overline{I}$. For $t=1$, in (1) and (2) we can simply fix $s_1 \in S$ and $s_0 \notin S$, setting, e.g., $I(F,s,\ell,0) = s_1$ if $I(F,s,\ell,0) \equiv 1$. For (3), note that the values of $I(F,s,\ell,t+1)$ and $\overline{I}(F,s,\ell,t+1)$ depend on finitely many $\Delta^0_{2n+1}$ questions that we need to ask, and we can compute which questions we must ask using $X$. It is important for this that $W_{k,t}^{\mathbf{0}^{(2n)}}$ has only elements $< t$. We have $S$-indices for each of these $\Delta^0_{2n+1}$ facts, and so we can combine these to get $S$-indices for $I(F,s,\ell,t+1)$ and $\overline{I}(F,s,\ell,t+1)$. The process of computing indices is $X$-computable.
			
			Now we must verify that this construction works. This is essentially the usual Slaman-Wehner argument. First we show that for every $F,s$ we have that
			\[ R_{F,s} = \bigcup_t R_{F,s}[t]\]
			is finite and not equal to $W_{k}^{\mathbf{0}^{(2n)}}$. Indeed, if we had $R_{F,s} = W_{k}^{\mathbf{0}^{(2n)}}$, then this would be because $R_{F,s} = W_{k}^{\mathbf{0}^{(2n)}} = F \cup X$ and $F \cup X$ is not $\Sigma^0_{2n+1}$ while $W_{k}^{\mathbf{0}^{(2n)}}$ is. On the other hand, for every finite set $F \neq W_{k}^{\mathbf{0}^{(2n)}}$, we have $R_{F,s} = F$ for large enough $s$: simply take $s$ so that for every $t \geq s$ we have $F \neq W_{k,t}^{\mathbf{0}^{(2n)}}$. Thus the collection
			\[ \{ R_{F,s} : \text{$F$ finite, $s \in \mathbb{N}$}\} = \{ F : \text{$F$ finite and $F \neq W^{\mathbf{0}^{{(2n)}}}_k$}\}.\]
			It just remains to note that $R_{F,s} = \{ \ell : R_\ell(a_{F,s})\}$ is the set coded by $a_{F,s}$ in $\mc{N}_k^X$. Thus $\mc{N}^X_k$ is isomorphic to the $k$th sort of $\mc{M}$, and hence $\mc{N}^X \cong \mc{M}$ is an $X$-computably copy of $\mc{M}$.
		\end{proof}
		
		These claims complete the proof of the lemma. By Claim \ref{cl:one} there is no $\mathbf{0}^{(n)}$-computable copy of $\mc{M}$. If $X$ is not arithmetic then it is not $\Sigma^0_{2n+1}$ and so by Claim \ref{cl:two} there is an $X$-computable copy of $\mc{M}$ computable uniformly from $X$. This is uniform in $n$.
	\end{proof}
	
	We have now proved the lemma for all $n$. For each $n$ there is a structure $\mc{M}_n$ which has no $\mathbf{0}^{(n)}$-computable copy, but for each set $X$ which is not $\Sigma^0_{2n+1}$, we can build (uniformly in $X$) an $X$-computable copy of $\mc{M}_n$. Putting all of these structures together as described just after the statement of the lemma, we obtain a structure whose degree spectrum is exactly the non-arithmetic degrees. 
\end{proof}

\section{Constructions for other applications}\label{sec:applications}

\subsection{Computable Scott sentences}

In this section, we will prove that when a computable structure has a $\Pi_n$ Scott sentence, the best we can say is that it has a $\mathbf{0}^{(n)}$-computable $\Pi_n$ Scott sentence, and a computable $\Pi_{2n}$ Scott sentence. Recall that Alvir, Csima, and Harrison-Trainor \cite{AlvirCsimaHT} proved that there is a computable structure with a $\Pi_n$ Scott sentence but no computable $\Sigma_4$ Scott sentence. We want to show that there is a computable structure with a $\Pi_n$ Scott sentence but no computable $\Sigma_{2n}$ Scott sentence. Morally speaking, we prove the theorem above by relativizing that fact and applying jump inversion. However, this argument does not quite work. Relativizing to $\mathbf{0}^{(2n-4)}$, there is a $\mathbf{0}^{(2n-4)}$-computable structure $\mc{A}$ with a $\Pi_2$ Scott sentence but no $\mathbf{0}^{(2n-4)}$-computable $\Sigma_{4}$ Scott sentence. Then $\Inv{(n-2)}{\mc{A}}$ has a computable copy and a $\Pi_{n}$ Scott sentence; however, it is not clear that is has no computable $\Sigma_{2n}$ Scott sentence. Instead, we prove the following stronger fact.

\begin{theorem}\label{thm:no-comp-scott-sentence-more}
	Fix $n \geq 2$. Let $S \subseteq \mathbb{N}$ be a complete $\Pi^0_{2n}$ set. There is a computable structure $\mc{A}$ with a $\Pi_{n}$ Scott sentence and a uniformly computable sequence of structures $\mc{C}_i$ such that
	\[ i \in S \Longrightarrow \mc{C}_i \cong \mc{A} \]
	and
	\[ i \notin S \Longrightarrow \mc{C}_i \ncong \mc{A}.\]
	In particular, $\mc{A}$ has no computable $\Sigma_{2n}$ Scott sentence.
\end{theorem}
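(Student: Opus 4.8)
The proof splits into two parts. First one establishes the case $n = 2$ directly, as a strengthening of the Alvir--Csima--Harrison-Trainor construction (Theorem \ref{thm:no-comp-scott-2}): one builds a computable structure $\mc A_0$ with a $\Pi_2$ Scott sentence together with a uniformly computable sequence of structures $(\mc C^0_i)_{i\in\omega}$ such that $\{i : \mc C^0_i \cong \mc A_0\}$ is an arbitrary prescribed $\Pi^0_4$ set. The point is that whether $\mc C^0_i \cong \mc A_0$ can already be an arbitrarily complicated $\Pi^0_4$ question even though $\mc A_0$ has a $\Pi_2$ Scott sentence (and hence a computable $\Pi_4$ Scott sentence, so that the question is \emph{at most} $\Pi^0_4$); this is the ``slightly more general statement in the case $n=2$'' alluded to earlier. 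Taking the input $\Sigma^0_4$/$\Pi^0_4$ set to be $\Pi^0_4$-complete gives Theorem \ref{thm:no-comp-scott-sentence-more} for $n = 2$.

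\textbf{The bootstrap to general $n$.} For $n > 2$ let $S$ be a complete $\Pi^0_{2n}$ set. Since the $\Pi^0_4$ sets relative to $\mathbf 0^{(2n-4)}$ are exactly the $\Pi^0_{2n}$ sets, $S$ is complete $\Pi^0_4$ relative to $\mathbf 0^{(2n-4)}$. Relativizing the case $n=2$ to $\mathbf 0^{(2n-4)}$ yields a $\mathbf 0^{(2n-4)}$-computable structure $\mc A_0$ with a $\Pi_2$ Scott sentence and a (uniformly) $\mathbf 0^{(2n-4)}$-computable sequence $(\mc C^0_i)$ with $i \in S \Rightarrow \mc C^0_i \cong \mc A_0$ and $i \notin S \Rightarrow \mc C^0_i \ncong \mc A_0$. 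Now apply the unfriendly jump inversion of order $n-2$: set $\mc A = \Inv{(n-2)}{\mc A_0}$ and $\mc C_i = \Inv{(n-2)}{\mc C^0_i}$. Because $\mathbf 0^{(2n-4)} = \mathbf 0^{(2(n-2))}$, Theorem \ref{thm:jump-inv}(1), which is uniform, makes $\mc A$ and the $\mc C_i$ computable, uniformly in $i$. By Theorem \ref{thm:jump-inv}(4) (applicable since $2 \geq 2$), $\mc A$ has a $\Pi_{(n-2)+2} = \Pi_n$ Scott sentence. Finally, $\mc G \cong \mc H$ if and only if $\Inv{(n-2)}{\mc G} \cong \Inv{(n-2)}{\mc H}$, so $i \in S \Rightarrow \mc C_i \cong \mc A$ and $i \notin S \Rightarrow \mc C_i \ncong \mc A$, as required. (Equivalently one can iterate $\Inv{1}{}$ one level at a time, relativizing to $\mathbf 0''$ at each step; the arithmetic works out the same.)

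\textbf{The Scott-sentence consequence.} The last clause is now immediate. We have $\{i : \mc C_i \cong \mc A\} = S$, which is $\Pi^0_{2n}$-complete and hence not $\Sigma^0_{2n}$. If $\mc A$ had a computable $\Sigma_{2n}$ Scott sentence $\varphi$, then, since $(\mc C_i)$ is uniformly computable, the set $\{i : \mc C_i \models \varphi\}$ would be $\Sigma^0_{2n}$; but this set equals $\{i : \mc C_i \cong \mc A\} = S$, a contradiction.

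\textbf{Main obstacle.} All of the real work lies in the case $n = 2$: producing a uniformly computable sequence $(\mc C^0_i)$ that codes an arbitrary $\Pi^0_4$ question into isomorphism with $\mc A_0$ while keeping $\mc A_0$'s Scott complexity at $\Pi_2$ (equivalently, keeping every automorphism orbit of $\mc A_0$ existentially definable). This is an adaptation of the Alvir--Csima--Harrison-Trainor argument in which the diagonalization against computable $\Sigma_4$ Scott sentences is replaced by a coding of a fixed $\Pi^0_4$ set; the subtlety, as always with unfriendly behavior, is that the coding of membership in $S$ must remain correct not only in computable copies but in copies of \emph{every} complexity, which is what forces the use of specially built unfriendly gadgets rather than a naive Marker extension.
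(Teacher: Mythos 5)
Your proof takes essentially the same route as the paper: establish the case $n=2$ by a direct construction adapting Alvir--Csima--Harrison-Trainor, then for $n \geq 3$ relativize to $\mathbf 0^{(2n-4)}$ and apply the unfriendly jump inversion $\Inv{(n-2)}{\cdot}$, using Theorem \ref{thm:jump-inv}(1) (uniform) to get computable copies, Theorem \ref{thm:jump-inv}(4) to get the $\Pi_n$ Scott sentence, and the preservation of isomorphism under $\Inv{(n-2)}{\cdot}$ to transfer the coding. Your explicit derivation of the ``in particular'' clause (no computable $\Sigma_{2n}$ Scott sentence) is correct and fills in a step the paper leaves implicit, and your ``main obstacle'' discussion accurately identifies that all the genuine difficulty is concentrated in the base case $n=2$, which both you and the paper leave unproved beyond a sketch.
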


The case $n = 1$ of this theorem is proved very similarly to the corresponding fact from \cite{AlvirCsimaHT}. The proof is quite technical and contains only minor differences and so we omit it. 
Given this fact, we can apply the jump inversion as follows.

\begin{proof}[Proof of the general case given the case $n = 2$]
	Fix $n \geq 3$. Given a complete $\Pi^0_{2n}$ set $S$, it is a complete $\Pi_4$ set relative to $\mathbf{0}^{(2n-4)}$. Relativizing the case $n=2$ to $\mathbf{0}^{(2n-4)}$, there is a $\mathbf{0}^{(2n-4)}$-computable structure $\mc{A}$ with a $\Pi_2$ Scott sentence, and a $\mathbf{0}^{(2n-4)}$-computable sequence of structures $\mc{C}_i$, such that
	\[ i \in S \Longrightarrow \mc{C}_i \cong \mc{A} \]
	and
	\[ i \notin S \Longrightarrow \mc{C}_i \ncong \mc{A}.\]
	Let $\mc{A}^* = \Inv{(n-2)}{\mc{A}}$ and $\mc{C}_i^* =  \Inv{(n-2)}{\mc{C}_i}$ be unfriendly $(n-2)$-jump inversions. We use Theorem \ref{thm:jump-inv}: $\mc{A}^*$ has a computable copy and there is a uniformly computable sequence of computable copies of $\mc{C}_i^*$. $\mc{A}$ has a $\Pi_{n}$ Scott sentence and
	\[ i \in S \Longrightarrow \mc{C}_i^* \cong \mc{A}^* \]
	and
	\[ i \notin S \Longrightarrow \mc{C}_i^* \ncong \mc{A}^*.\]
	This completes the argument.
\end{proof}

\subsection{Back-and-forth types}

Recall that in \cite{ChenGonzalezHT}, Chen, Gonzalez, and Harrison-Trainor showed that for each fixed $\mc{A}$ the set 
\[ \{ \mc{B} \in \Mod(\mc{L}) : \mc{A} \leq_n \mc{B}\}\]
is always (boldface) $\bfPi^0_{n+2}$. They asked, if $\mc{A}$ is computable, what the lightface complexity of this set is. The upper bound is $\Pi^0_{2n}$. For the structures from Theorem \ref{thm:no-comp-scott-sentence-more} this is best possible.

\backandforthtypes


\subsection{Separating structures}

Recall that Ash and Knight \cite{AshKnight} showed that if $\mc{A}$ and $\mc{B}$ are computable structures, $n$-friendly with computable existential diagrams, and $\mc{B} \nleq_n \mc{A}$, then there is a computable $\Pi_n$ sentence $\varphi$ such that $\mc{B} \models \varphi$ and $\mc{A} \nmodels \varphi$. If we drop the assumption that $\mc{A}$ and $\mc{B}$ are $n$-friendly with computable existential diagrams, then there is a computable $\Pi_{2n-1}$ sentence $\varphi$.\footnote{The proof is similar to that of Theorem \ref{thm:separating-structures} where we showed that if $\mc{A}$ and $\mc{B}$ are computable structures, and $\mc{A} \nleq_n \mc{B}$, then there is a $\mathbf{0}^{(n-1)}$-computable $\Pi_n$ sentence $\varphi$ such that $\mc{A} \models \varphi$ and $\mc{B} \nmodels \varphi$.} We get the following example showing that this is best possible.

\separating*

\begin{proof}
	Let $S$ be a $\Sigma^0_{2n-1}$-complete set. Fix, as in Theorem \ref{thm:jump-inversion-structures}, computable structures $\mc{A} \ncong \mc{B}$ and a sequence of computable structures $\mc{C}_i$ such that
	\[ i \in S \Longrightarrow \mc{C}_i \cong \mc{A} \]
	and
	\[ i \notin S \Longrightarrow \mc{C}_i \cong \mc{B}.\]
	Moreover, $\mc{B} \nleq_{n} \mc{A}$. If there was a computable $\Pi_{2n-1}$ sentence $\varphi$ such that $\mc{A} \models \varphi$ and $\mc{B} \nmodels \varphi$, then
	\[ S = \{ i : \mc{C}_i \models \varphi\}\]
	would be $\Pi^0_{2n-1}$. Since it is $\Sigma^0_{2n-1}$-complete, there is no such $\varphi$.
\end{proof}

\subsection{Maximally unfriendly structures}

We have noted previously that if $\mc{A}$ is a computable structure the the back-and-forth relations $\leq_n$ on $\mc{A}$ are $\Pi_{2n}$. We show that this is best possible, that is, there is a maximally unfriendly structure.

\maxunfriendly*

\begin{proof}
	Let $R \subseteq \mathbb{N}$ be a $\Pi^0_{2n}$-complete set.  Write
	\[ k \in R \Longleftrightarrow \forall \ell \; (k,\ell) \in S\]
	where $S$ is $\Sigma^0_{2n-1}$. We may assume that for each $k$ there is at most one $\ell$ with $(k,\ell) \notin S$. Fix, as in Theorem \ref{thm:jump-inversion-structures}, computable structures $\mc{A} \ncong \mc{B}$ and a sequence of computable structures $\mc{C}_{k,\ell}$ such that
	\[ (k,\ell) \in S \Longrightarrow \mc{C}_{k,\ell} \cong \mc{A} \]
	and
	\[ (k,\ell) \notin S \Longrightarrow \mc{C}_{k,\ell} \cong \mc{B}.\]
	Moreover, $\mc{B} \nleq_{n} \mc{A}$.
	
	$\mc{M}$ will have a unary relation $U$, binary relations $D_\ell$, and the symbols of the language of $\mc{A}$ and $\mc{B}$. For each element $a$ satisfying $U$, we will attach to $a$ a set of elements $D_\ell(a) = \{ b : D_\ell(a,b)\}$. None of these elements will satisfy $U$, and the sets $D_\ell(a)$ will partition the non-$U$ elements. Each set $D_\ell(a)$ will be the domain of a structure isomorphic to $\mc{A}$ or $\mc{B}$. Essentially, one should think of $\mc{M}$ as consisting of infinitely many elements $a$, each of which has attached to it an ordered sequence of structures $D_\ell(a)$ each of which is isomorphic to $\mc{A}$ or $\mc{B}$.
	
	$\mc{M}$ will have elements $a_k$, each of which has as the $\ell$th structure $D_\ell(a_k)$ attached to it a copy of $\mc{C}_{k,\ell}$. If $k \in R$, then each of these structures is isomorphic to $\mc{A}$, while if $k \notin R$, then exactly one of these structures is isomorphic to $\mc{B}$. $\mc{M}$ will also have one additional element $a^*$ with each structure attached to it being a copy of $\mc{A}$.
	
	Standard arguments with back-and-forth relations show that $(\mc{M},a^*) \geq_n (\mc{M},a_k)$ if and only if $k \in R$: If $k \in R$, then $a_k$ and $a^*$ have the same structures attached, all copies of $\mc{A}$, while if $k \notin R$, then for some $\ell$ the $\ell$th structure attached to $a_k$ is isomorphic to $\mc{B}$, but the $\ell$ structure attached to $a^*$ is isomorphic to $\mc{A}$, and $\mc{A} \ngeq_n \mc{B}$. This yields a $1$-reduction from the $\Pi^0_{2n}$-complete set $U$ to the $n$-back-and-forth relation on $\mc{M}$ which is thus $\Pi^0_{2n}$-complete and computes $\mathbf{0}^{(2n)}$.
\end{proof}

\bibliography{References}
\bibliographystyle{alpha}

\end{document}